\documentclass{amsart}

\input xy
\xyoption{all}

\usepackage{fullpage}
\usepackage[colorlinks, linkcolor=blue, citecolor=blue, urlcolor=red!80!gray]{hyperref} % include "pagebackref" in the brackets if you want the bibliography to show where the citations appear in the paper
\usepackage[square,numbers]{natbib}
\usepackage{amsfonts}
\usepackage[all]{xy}
\usepackage{amsmath, amssymb, faktor}
\usepackage{mathtools}
\usepackage{graphicx}
\usepackage{tikz}
\usepackage{tikz-cd} % for commutative diagram environment
\usepackage{faktor} % for slanted quotients
\usepackage{enumitem} % for enumerating with letters
\usepackage[skip=10pt, indent=15pt]{parskip}
\usepackage[normalem]{ulem} % for stikethrough "\sout{}" % [normalem] makes it that so \emph is not changed to \underline

\newtheorem{same}{This should never appear}[section]
\newtheorem{definition}[same]{Definition}

\newtheorem{remark}[same]{Remark}
\newtheorem{theorem}[same]{Theorem}

\newtheorem{lemma}[same]{Lemma}
\newtheorem{fact}[same]{Fact}
\newtheorem{question}[same]{Question}
\newtheorem{corollary}[same]{Corollary}
\newtheorem{prop}[same]{Proposition}

\newtheorem{definition*}{Definition}
\newtheorem*{theorem*}{Theorem}
\newtheorem*{theorem-1}{Theorem \ref{thm: WO <-> Superstable}}
\newtheorem*{corollary-2}{Corollary \ref{cor: old GCH stability}}
\newtheorem*{theorem-3}{Theorem \ref{thm: Baer-like for p.i.}}

\newcommand{\id}{\operatorname{id}}
\newcommand{\K}{\mathbf{K}}

\newcommand{\CM}{\operatorname{C}_M^C}
\newcommand{\SAct}{{S\mbox{-}\mathbf{Act}}}
\newcommand{\KActp}{\K_{\SAct_p}}
\newcommand{\LS}{\operatorname{LS}}
\newcommand{\gtp}{\mathbf{gtp}}
\newcommand{\gS}{\mathbf{gS}}
\newcommand{\pp}{\mathbf{pp}}

\newcommand{\Stab}{\operatorname{Stab}}
\newcommand{\card}[1]{| #1 |}%{\operatorname{card}( #1 )}
\newcommand{\cardS}{\operatorname{card}(S)}

    \newbox\noforkbox \newdimen\forklinewidth
    \setbox0\hbox{$\textstyle\smile$}
    \setbox1\hbox to \wd0{\hfil\vrule width \forklinewidth depth-2pt
     height 10pt \hfil}
    \setbox\noforkbox\hbox{\lower 2pt\box1\lower 2pt\box0\relax}
    \def\unionstick{\mathop{\copy\noforkbox}\limits}
    
    \newcommand{\nf}{\unionstick}
    \newcommand{\dnfb}[4]{#2 \overset{#4}{\underset{#1}{\overline{\nf}}} #3}
    \newcommand{\dnf}[4]{#2 \overset{#4}{\underset{#1}{\nf}} #3}

    \newbox\noforkboxp
    \setbox\noforkboxp\hbox{%
      \copy\noforkbox
      \hbox to 0pt{\hss\raise 4pt\hbox{\scalebox{0.5}{p}}\hskip 1pt}%adds a 0 width box as to not affect \overline later, \hss shifts all the way to the right, then \raise and \hskip are used to move vertically and horizontally as needed (can use negative values)
    }

    \def\unionstickp{\mathop{\vcenter{\hbox{\copy\noforkboxp}}}\limits}

    \newcommand{\nfp}{\unionstickp}
    \newcommand{\dnfbp}[4]{#2 \overset{#4}{\underset{#1}{\overline{\nfp}}} #3}
    \newcommand{\dnfp}[4]{#2 \overset{#4}{\underset{#1}{\nfp}} #3}

\title[Acts with Pure Embeddings]{On the Abstract Elementary Class of Acts with Pure Embeddings}

\author[Feigert, Herden, Mazari-Armida]{Jonathan Feigert, Daniel Herden, Marcos Mazari-Armida}
    
\date{\today}

\address{
	Department of Mathematics,
	Baylor University,
	Sid Richardson Building,
	1410 S.~4th Street,
	Waco, TX 76706, USA}

\email{\texttt{
    Jonathan\_Feigert1@baylor.edu,
    Daniel\_Herden@baylor.edu,
    Marcos\_Mazari@baylor.edu
    }}

\thanks{The second author was supported by Simons Foundation grant MPS-TSM-00007788.
	The third author was partially supported by NSF grant DMS-2348881 and Simons Foundation grant MPS-TSM-00007597.}

\begin{document}

\keywords{Acts; Stability; Superstability;  Weakly noetherian monoids;  Pure injective acts; Abstract elementary
classes.}
\subjclass[2020]{Primary: 03C60, 20M30; Secondary: 03C45, 03C48, 20M12,
20M50}

\begin{abstract}
    We study the abstract elementary class of acts with pure embeddings.
    In particular, we show that stability and superstability in this class can be characterized in terms of the monoid $S$ being LO (for every $s,t \in S$, we have that $s \in St$ or $t \in Ss$) and weakly noetherian (every ideal is finitely generated), respectively.
    Moreover, under mild set-theoretic assumptions, we characterize the stability spectrum via the minimal cardinality of a generating set for every ideal.
    As an application, we obtain a Baer-like criterion for pure injective acts when $S$ is LO.
    We use this to provide an alternative proof that the class of acts has enough pure injectives when $S$ is LO.
\end{abstract}

\maketitle
\tableofcontents
%-----%-----%-----%
%-----%-----%-----%
%-----%-----%-----%
%-----%-----%-----%
\section{Introduction} \label{sec: intro}
%-----%-----%-----%
%-----%-----%-----%
%-----%-----%-----%
%-----%-----%-----%

Abstract elementary classes (AECs for short) are a model-theoretic framework introduced by Shelah \cite{shelah1987nonelementaryclasses} to capture the semantic structure of non-first-order theories.
Historically, the study of AECs has been focused on developing the abstract theory, but more recently there has been mounting evidence of the meaningful connections to algebra.
These connections have been primarily studied in the context of modules (see \cite{boney2026module, mazari2021flatrings, mazari2021model, mazari2025limit, MazariRosickyRelativeInjective}).
The aim of this paper is to continue to develop the study of these interactions but for the class of acts.

$S$-acts (also known as polygons) can be seen both as ``non-additive modules'' and as a generalization of $G$-sets where $S$ is only assumed to be a monoid rather than a group.
A robust introduction to acts can be found in \cite{kilp2011monoids}.
The class of $S$-acts with a partial order given by embeddings was studied as an AEC in \cite{mazariarmida2025abstractelementaryclassacts}.
In this paper, we again study the class of $S$-acts but now with a partial order given by pure embeddings.
Pure embeddings lie in between embeddings and elementary embeddings, and can be defined in various ways: model-theoretically in terms of preserving certain first-order formulas, algebraically in terms of preserving exact sequences, and category-theoretically as directed colimits of split monomorphisms.

Pure injective objects --- objects that are injective with respect to pure embeddings --- have been widely studied.
For modules, they form a class between cotorsion and injective modules. 
Moreover, every module is a pure submodule of a pure injective module, namely its double character module (see \cite{enochs2011homological}).
Early results about pure injectives in the context of universal algebra (where pure injectivity is known as equational compactness) can be found in \cite{wenzel1970subdirect}, \cite{banaschewski1972equational}, \cite{banaschewski1974equational}, and \cite{normak1980purity}.

The focus of this paper, acts with pure embeddings, differs both from acts with embeddings and  from modules with pure embeddings in the following ways.
It is known that both of these other AECs are always stable \cite{mazariarmida2025abstractelementaryclassacts, Kucera2020Modules}, one of the key model-theoretic dividing lines. 
However, in our setting, stability is equivalent to the monoid $S$ being locally linearly preordered (LO for short), see Theorem \ref{thm: LO <-> Stable}.
We borrow this terminology from Mustafin \cite{mustafin1988stability} where it was shown to be equivalent to the first-order theory of every $S$-act being stable, and from Cox et al. \cite{cox2025cofibrantgenerationpuremonomorphisms} where it was shown to be equivalent to pure embeddings being cofibrantly generated.
Moreover, the AEC of $S$-acts with pure embeddings in general fails to have joint embedding, a property that holds for both acts with embeddings and modules with pure embeddings.
Surprisingly, it appears that joint embedding may generally be an interesting algebraic property, as it is in our setting (see Proposition \ref{prop: JEP <-> local zeros}).
Superstability, another of the key model-theoretic dividing lines, has been found to also be an algebraically significant property in many contexts.
For $S$-acts with embeddings, superstability is equivalent to $S$ being weakly noetherian \cite{mazariarmida2025abstractelementaryclassacts}.
For $R$-modules with pure embeddings, it is equivalent to $R$ being pure-semisimple \cite{mazari2021superstability}.
In our setting, superstability characterizes monoids that are both weakly noetherian and LO (see Theorem \ref{thm: WO <-> Superstable}).
Furthermore, under some mild set-theoretic assumptions, we completely characterize the stability spectrum of acts with pure embeddings (see Theorem \ref{thm: GCH stability}).
Such a characterization has been obtained for acts with embeddings \cite{mazariarmida2025abstractelementaryclassacts}, but is not currently known for modules with pure embeddings.

In the recent paper 
\cite{cox2025cofibrantgenerationpuremonomorphisms}, pure embeddings were studied in the slightly more general setup of a presheaf category $\textbf{Set}^\mathcal{C}$ of set-valued functors for a small category $\mathcal{C}$ (this reduces to the class of $S$-acts when $\mathcal{C}$ has only one object, i.e., $\mathcal{C}$ is a monoid $S$).
However, the primary result there was that pure embeddings are cofibrantly generated if and only if $\mathcal{C}$ is LO.
In this paper, we largely assume that monoids are LO and focus on finer, model-theoretic notions related to stability.
These are studied using Galois types rather than the category-theoretic independence relations.
This requires a careful consideration of properties such as joint embedding and local character, which did not play a significant role in \cite{cox2025cofibrantgenerationpuremonomorphisms}.
Furthermore, as an application to acts theory, we obtain a Baer-like criterion for pure injective acts when $S$ is LO.

% \addtocontents{toc}{\setcounter{tocdepth}{-10}}
\subsection*{Summary of Paper}
In Section \ref{sec: prelims}, we provide some necessary background information on acts, AECs, and independence relations which serve as the main pieces of machinery throughout much of the paper.
Additionally, we prove some general results about the AEC of $S$-acts with pure embeddings ($\KActp$) such as the equivalence between equality of Galois types and the equality of pp-types (see Theorem \ref{thm: gtp = pp}).
This result will be a key ingredient in many of our other proofs, in part because it quickly implies tameness.
Although similar results have been obtained for some classes of modules, there are certain classes (such as flat modules) where it is not currently known.
The approach employed in this paper is novel and may be able to be adapted to work in other settings.
We also characterize the monoids for which joint embedding holds (see Proposition \ref{prop: JEP <-> local zeros}).
When applied to the case when $S$ is a group, this shows that joint embedding and categoricity only occur when $S$ is trivial (see Corollary \ref{cor: JEP for G-sets}).

In Section \ref{sec: main results}, we first prove that stability is equivalent to $S$ being LO (see Theorem \ref{thm: LO <-> Stable}).
Furthermore, we show that superstability is again algebraically interesting by using it to characterize monoids that are both LO and weakly noetherian.
    \begin{theorem-1}
        $\KActp$ is superstable if and only if $S$ is weakly noetherian and LO.
    \end{theorem-1}
In Subsection \ref{sub: main - stability spectrum}, we characterize the stability cardinals under some mild set-theoretic assumptions via a cardinal $\gamma(S)$ that measures the minimal cardinality of a generating set for every ideal of $S$ (see Theorem~\ref{thm: GCH stability}). In particular, we note the corresponding result under the assumption of the Generalized Continuum Hypothesis (GCH).
    \begin{corollary-2}
        Assume GCH. Let $S$ be an LO monoid and $\lambda \ge (\cardS+\aleph_0)^+$.
        Then, $\KActp$ is stable in $\lambda$ if and only if $\lambda^{<\gamma(S)} = \lambda$.
    \end{corollary-2}
In Section \ref{sec: limit models}, we prove a Baer-like criterion for pure injectivity.
    \begin{theorem-3}
        Let $S$ be an LO monoid.
        An $S$-act $A$ is pure injective if and only if, for any $S$-acts $B \leq_p C$ such that $\card{C} \leq \cardS + \aleph_0$, every $S$-homomorphism $f : B \rightarrow A$ can be extended to an $S$-homomorphism from $C$ to $A$.
    \end{theorem-3}
Using limit models, this is then used to provide a more direct proof that every $S$-act is a pure subact of a pure injective act when $S$ is LO, recovering a result from \cite{cox2025cofibrantgenerationpuremonomorphisms}.
We further study limit models in Subsection~\ref{sub: limit models - spectrum} by focusing on those that can be jointly embedded with the zero act.
Under some mild-set theoretic assumptions, we characterize the spectrum of these limit models (see Theorem \ref{thm: limit model iso}).

\subsection*{Acknowledgments}
We would like to thank Will Boney, Ji\v{r}\'{\i} Rosick\'{y}, and  Michael D. Walton for discussions around the topics of this paper.

% \changelocaltocdepth{2} % ToC shows subsections after this point
%-----%-----%-----%
%-----%-----%-----%
%-----%-----%-----%
%-----%-----%-----%
\section{Preliminaries and Basic Results} \label{sec: prelims}
%-----%-----%-----%
%-----%-----%-----%
%-----%-----%-----%
%-----%-----%-----%

    We begin by introducing some notation and terminology from both acts theory and model theory that will be used throughout this paper.
    For a more in-depth introduction, we refer the reader to \cite{kilp2011monoids} for acts, \cite{baldwin2009categoricity} for abstract elementary classes, and \cite{Lieberman_2019} for independence relations.

%-----%-----%-----%
%-----%-----%-----%
%-----%-----%-----%
%-----%-----%-----%
\subsection{Acts} \label{sub: prelims - acts}
%-----%-----%-----%
%-----%-----%-----%
%-----%-----%-----%
%-----%-----%-----%

    Let $S$ be a monoid with identity element $1 \in S$.
    We denote the cardinality of $S$ by $\cardS$.
    A \emph{(left) $S$-act} is a non-empty set $A$ together with a multiplication $S \times A \rightarrow A$ such that $1a = a$ and $s(ta) = (st)a$ for every $s,t \in S$ and $a \in A$.
    A function $f : A \rightarrow B$ between $S$-acts is an \emph{S-homomorphism} if $f(sa) = sf(a)$ for every $s \in S$ and $a \in A$.
    In this case, we say that $f$ is an \emph{embedding} (resp. an \emph{$S$-isomorphism}) if $f$ is also injective (resp. bijective).
    If the inclusion map from $A$ to $B$ is an $S$-homomorphism, we say that $A$ is a \emph{subact} of $B$ and write $A \leq B$.
    For $X \subseteq A$, the \emph{subact generated by $X$} is $SX = \{sx \mid s \in S, ~ x \in X\}$.
    When $X = \{a\}$, we will simply write $Sa$. For $S$-acts $M \leq A,B$, we say that $A$ and $B$ are \emph{isomorphic over $M$} if there exists an $S$-isomorphism $f : A \rightarrow B$ with $f\upharpoonright M = \id_M$.
    
    An \emph{$S$-congruence} on an $S$-act $A$ is an equivalence relation $\rho$ on $A$ such that $(sa,sb) \in \rho$ whenever $(a,b) \in \rho$ and $s\in S$.
    In this case, we denote the $\rho$-equivalence class containing $a \in A$ by $[a]_\rho$.
    When $\rho$ is clear from context, we will simply write $[a]$.
    For any subset $X \subseteq A \times A$, the \emph{$S$-congruence generated by $X$}, denoted $\rho_X$, is the smallest $S$-congruence on $A$ containing $X$ and is characterized as follows: $(a,b) \in \rho_X$ if and only if $a = b$ or there exists some $0 < n < \omega$, and some $s_i \in S$ and $p_i,q_i \in A$ with $(p_i,q_i)$ or $(q_i,p_i) \in X$ for $1 \leq i \leq n$ such that $a = s_1p_1$, $s_iq_i = s_{i+1}p_{i+1}$ for $1 \leq i \leq n-1$, and $s_nq_n = b$ (see \cite[Corollary 1.1.7]{kilp2011monoids} for the analogous characterization of equivalence relations on a set $A$ generated by a subset $X \subseteq A \times A$).
    
    We say that $I \subseteq S$ is a \emph{left ideal} of $S$ if $SI \subseteq I$, i.e., if $I$ is empty or a subact of $S$.
    We denote by $\gamma(S)$ the minimum infinite cardinal such that every left ideal of $S$ is generated by fewer than $\gamma(S)$ elements, and define $\gamma_r(S)$ to be the minimum regular cardinal that is greater than or equal to $\gamma(S)$, i.e.,
    \[
        \gamma_r(S) = \begin{cases}
            \gamma(S) & \text{if} ~ \gamma(S) ~ \text{is regular}, \\
            \gamma(S)^+ & \text{if} ~ \gamma(S) ~ \text{is singular},
        \end{cases} \tag{2.1} \label{eq:gamma_r}
    \]
    where $\gamma(S)^+$ denotes the \emph{successor} of $\gamma(S)$.
    If $\gamma(S) = \gamma_r(S) = \aleph_0$, then we say that $S$ is \emph{weakly (left) noetherian}.
    
    We denote the category of $S$-acts with $S$-homomorphisms by $\SAct$.
    Two constructions that we will make use of repeatedly are the coproduct and the pushout.
    In $\SAct$, the \emph{coproduct} of $S$-acts $A$ and $B$ is the disjoint union and will be denoted by $A \coprod B$.
    The \emph{pushout} of a pair of $S$-homomorphisms $B_1 \xleftarrow{f_1} A \xrightarrow{f_2} B_2$ is given by the $S$-act $P = (B_1 \coprod B_2)/\rho$ together with the $S$-homomorphisms $B_1 \xrightarrow{g_1} P \xleftarrow{g_2} B_2$ where $\rho$ is the $S$-congruence on $B_1 \coprod B_2$ generated by $\{(f_1(a),f_2(a)) \mid a \in A\}$ and $g_i$ is the canonical inclusion of $B_i$ into $B_1\coprod B_2$ composed with the canonical projection for $i = 1,2$.
    For a pair of $S$-homomorphisms $B_1 \xrightarrow{h_1} C \xleftarrow{h_2} B_2$ with $h_1 \circ f_1 = h_2 \circ f_2$, the unique $S$-homomorphism $k : P \rightarrow C$ such that $h_i = k \circ g_i$ for $i = 1,2$ is given by $k([b]_\rho) = h_i(b)$ if $b \in B_i$ (see \cite[Proposition 2.2.16]{kilp2011monoids}).
    In this case, we may simply write that $B_1 \xrightarrow{g_1} P \xleftarrow{g_2} B_2$ is the pushout of $B_1 \xleftarrow{f_1} A \xrightarrow{f_2} B_2$, and we call $k : P \rightarrow C$ the \emph{pushout-induced map} of $B_1 \xrightarrow{h_1} C \xleftarrow{h_2} B_2$.
    Observe that when $f_1,f_2$ are inclusion maps and $B_1 \cap B_2 = A$, then we may take the pushout to be $B_1 \cup B_2$ with $g_1,g_2$ as inclusion maps and $h_1 \cup h_2$ as the pushout-induced map.
    
    Although it will not be used as much as the coproduct and the pushout, we recall for completeness the category-theoretic definition of the pullback, the dual notion of the pushout.
    The \emph{pullback} of a pair of $S$-homomorphisms $B_1 \xrightarrow{g_1} C \xleftarrow{g_2} B_2$ is 
    an $S$-act $M$ together with a pair of $S$-homomorphisms $B_1 \xleftarrow{f_1} M \xrightarrow{f_2} B_2$ such that $g_1 \circ f_1 = g_2 \circ f_2$ and, for every pair of $S$-homomorphisms $B_1 \xleftarrow{h_1} A \xrightarrow{h_2} B_2$ such that $g_1 \circ h_1 = g_2 \circ h_2$, there exists a unique $S$-homomorphism $k: A \rightarrow M$ such that $h_i = f_i \circ k$ for $i = 1,2$. In this case, the commutative square of $S$-homomorphisms $(f_1, f_2, g_1, g_2)$ will be called a \emph{pullback square}.
    See \cite[Proposition~2.2.5]{kilp2011monoids} for a description of pullbacks in $\SAct$.
    In particular, when $g_1$, $g_2$ are inclusion maps and $B_1 \cap B_2 \neq \varnothing$, we may take the pullback to be $B_1 \cap B_2$ with $f_1,f_2$ as inclusion maps. 
    
    Additionally, we will use the following definition from \cite{mustafin1988stability}.

    \begin{definition}[{\cite{mustafin1988stability}}]
        Let $S$ be a monoid, $A$ be an $S$-act, $X \subseteq A$, and $a,b \in A\setminus X$.
        We say $a$ and $b$ are \emph{connected outside $X$} if there exist $n < \omega$ and $a_0, \dots, a_n \in A \setminus X$ such that $a_0 = a$, $a_n = b$, and $a_i \in Sa_{i+1}$ or $a_{i+1} \in Sa_i$ for all $i < n$.
        In this case, we call the $(n+1)$-tuple $(a = a_0, a_1, \dots, a_n = b)$ a \emph{connecting path} from $a$ to $b$ in $A$ outside of $X$.
        Let
        \[
            \operatorname{C}^A_X(a) = \{b \in A \mid a \text{ and } b \text{ are connected outside } X\},
        \]
        and for $Y \subseteq A\setminus X$
        \[
            \operatorname{C}^A_X(Y) = \bigcup_{a \in Y}\operatorname{C}^A_X(a).
        \]
    \end{definition}
    
    It is clear that, for fixed $X \subseteq A$, being connected outside $X$ defines an equivalence relation on $A \setminus X$ where $\operatorname{C}^A_X(a)$ is exactly the equivalence class containing $a$.

    In this paper, we will be particularly interested in $S$-homomorphisms that are pure embeddings (a stronger notion than an embedding).
    We consider equivalent logical and algebraic definitions of pure embeddings.
    For a fixed language, a \emph{positive primitive formula} or \emph{pp-formula} is a first-order formula that is built from atomic formulas using only conjunctions and existential quantifiers.
    Up to logical equivalence, every pp-formula $\varphi(x_1,\dots,x_n)$ can be written as
    \[
        \exists x_{n+1} \dots x_{n+m} \psi(x_1,\dots,x_{n+m})
    \]
    for some $\psi(x_1,\dots,x_{n+m})$ that is a conjunction of atomic formulas. In the language of $S$-acts (which consists only of unary function symbols, one for each element of $S$), $\psi(x_1,\dots,x_{n+m})$ is simply a finite system of equations of the form $sx_i = tx_j$ for some $s,t \in S$ and $1 \leq i,j \leq n+m$.
    Moreover, if $A$ is an $S$-act and $a_1,\dots,a_n \in A$, then $\psi(a_1,\dots,a_n,x_{n+1},\dots,x_{n+m})$ is simply a finite system of equations with constants from~$A$.
    Thus, one can view the semantics of pp-formulas in terms of the solvability of finite systems of equations of the forms $sx = ty$ and $sx = a$ where $x,y$ are variables, $a\in A$ is a constant, and $s,t \in S$.
    
    We will commonly abbreviate $\varphi(x_1,\dots,x_{n})$ as $\varphi(\overline{x})$, where $\overline{x}$ denotes the vector of free variables of $\varphi$ and $\ell(\overline{x})$ the length of this vector.
    It immediately follows from the definitions that if $f : A \rightarrow B$ is an $S$-homomorphism, $\varphi(\overline{x})$ is a pp-formula, and $\overline{a} \in A^{\ell(\overline{x})}$, then $A \models \varphi(\overline{a})$ implies that $B \models \varphi(f(\overline{a}))$.
    For a pure embedding, the reverse implication also holds.
    That is to say, an $S$-homomorphism $f : A \rightarrow B$ is a \emph{pure embedding} if, for every pp-formula $\varphi(\overline{x})$ and every $\overline{a} \in A^{\ell(\overline{x})}$, we have
    \[
    A \models \varphi(\overline{a}) ~ \text{if and only if} ~ B \models \varphi(f(\overline{a})).
    \]
    It can be easily seen that pure embeddings are in fact embeddings since $x_1 = x_2$ is a pp-formula.
    If the inclusion map from $A$ to $B$ is a pure embedding, we say that $A$ is a \emph{pure subact} of $B$ and write $A \leq_p B$.

    Equivalently, we may define pure embeddings in terms of systems of equations.
    This will be the perspective primarily taken in this paper.
    Let $f : A \rightarrow B$ be an embedding.
    For every system of equations $\Sigma$ with constants from $f(A)$, denote by $f^{-1}(\Sigma)$ the associated system of equations formed from $\Sigma$ by replacing each constant with its unique preimage in $A$.

    \begin{fact}
        Let $f : A \rightarrow B$ be an embedding.
        Then the following are equivalent.
        \begin{enumerate}
            \item[$(1)$] $f : A \rightarrow B$ is a pure embedding.

            \item[$(2)$] For every finite system of equations $\Sigma$ with constants from $f(A)$, we have that $f^{-1}(\Sigma)$ has a solution in $A$ if and only if $\Sigma$ has a solution in $B$.
        \end{enumerate}
    \end{fact}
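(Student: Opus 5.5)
We show that each condition is a restatement of the other. The approach is to translate between pp-formulas and finite systems of equations, using the normal form recalled just before the statement. Every pp-formula $\varphi(x_1,\dots,x_n)$ is, up to logical equivalence, of the form $\exists x_{n+1}\dots x_{n+m}\,\psi(x_1,\dots,x_{n+m})$, where $\psi$ is a finite conjunction of equations $sx_i = tx_j$. Substituting a tuple $\overline{a}$ for $x_1,\dots,x_n$ turns $\psi(\overline{a},x_{n+1},\dots,x_{n+m})$ into a finite system of equations with constants. These equations have the forms $sx=ty$, $sx = ta$, $sa = ta'$ and so on.

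Since $f$ is an embedding, the constants of a system $\Sigma$ over $f(A)$ correspond exactly to constants of $f^{-1}(\Sigma)$ over $A$. In particular, $f^{-1}(\Sigma)$ is obtained from $\Sigma$ by pulling back a tuple $f(\overline{a})$ to $\overline{a}$.

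For $(1)\Rightarrow(2)$, take a finite system $\Sigma$ with constants $f(a_1),\dots,f(a_n)$ and unknowns $y_1,\dots,y_m$. Replace each constant $f(a_i)$ by a fresh variable $x_i$ to obtain a conjunction of atomic formulas $\psi(\overline{x},\overline{y})$, and set $\varphi(\overline{x}) = \exists \overline{y}\,\psi$. Then $\Sigma$ has a solution in $B$ exactly when $B\models\varphi(f(\overline{a}))$. Likewise, $f^{-1}(\Sigma)$ has a solution in $A$ exactly when $A\models\varphi(\overline{a})$. Purity gives the equivalence.

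One small point must be checked. An equation such as $sa = a'$, which involves only constants, gives an atomic formula $sx_i = x_j$. Its truth is also preserved, so it causes no problem. If $\Sigma$ has no unknowns ($m=0$), then $\varphi$ is quantifier-free, and the same argument applies.

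For $(2)\Rightarrow(1)$, take a pp-formula $\varphi(\overline{x})$ in the normal form above and $\overline{a}\in A^{\ell(\overline{x})}$. Let $\Sigma$ be the system $\psi(f(\overline{a}),\overline{y})$, which has constants from $f(A)$. Then $f^{-1}(\Sigma)$ is $\psi(\overline{a},\overline{y})$. Hence $A\models\varphi(\overline{a})$ if and only if $f^{-1}(\Sigma)$ has a solution in $A$, if and only if $\Sigma$ has a solution in $B$ (by (2)), if and only if $B\models\varphi(f(\overline{a}))$. Logical equivalence of $\varphi$ with its normal form holds in every $S$-act, so this covers all pp-formulas.

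There is no real obstacle here. The only care needed is bookkeeping: the same constant may occur in several equations, and injectivity of $f$ makes $f^{-1}(\Sigma)$ well defined. We also need the normal form for pp-formulas, which follows by pulling existential quantifiers out of conjunctions after renaming bound variables.
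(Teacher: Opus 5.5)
Your proof is correct and is essentially the argument the paper intends: the paper records this as a Fact without a separate proof. It justifies it only by the preceding remark that a pp-formula $\exists x_{n+1}\dots x_{n+m}\,\psi(x_1,\dots,x_{n+m})$ evaluated at parameters amounts to the solvability of the finite system $\psi$ with those parameters as constants, and your two directions make that translation explicit, using injectivity of $f$ to make $f^{-1}(\Sigma)$ well defined.
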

    
    Pure embeddings interact particularly nicely with coproducts and pushouts in $\SAct$.
    
    \begin{fact}\label{fact: preserving purity}\
        \begin{enumerate}
            \item[$(1)$] Let $f_i : A \rightarrow B_i$ be pure embeddings for $i = 1,2$.
            Then $f_i(A) \leq_p B_1 \coprod B_2$ for $i = 1,2$.

            \item[$(2)$] Let $g_i : A_i \rightarrow B_i$ be pure embeddings for $i = 1,2$.
            Then $g_1(A_1) \coprod g_2(A_2) \leq_p B_1 \coprod B_2$.

            \item[$(3)$] Let $B_1 \xrightarrow{g_1} P \xleftarrow{g_2} B_2$ be the pushout of $B_1 \xleftarrow{f_1} A \xrightarrow{f_2} B_2$.
            If $f_i$ is an embedding (resp. pure embedding), then $g_{3-i}$ is an embedding (resp. pure embedding) for $i = 1,2$.
        \end{enumerate}
    \end{fact}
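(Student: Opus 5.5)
We prove the three parts in order, reducing everything to the description of pure embeddings via finite systems of equations (the Fact above) together with the explicit descriptions of coproducts and of the congruence generated by a set of pairs. Throughout we identify $A$ with its image, so all maps are inclusions.

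For $(1)$ and $(2)$ we use that $B_1 \coprod B_2$ is a disjoint union in which both $B_1$ and $B_2$ are subacts closed under the action. Take a finite system $\Sigma$ with constants from $A_1 \coprod A_2$ (respectively from $A$, embedded in one summand), and suppose it has a solution $\overline{b}$ in $B_1 \coprod B_2$. Form the graph on the variables joining $x$ and $y$ whenever an equation $sx = ty$ occurs. Every solution value within a connected component lies in the same summand, since $sx = ty$ with $x \in B_1$ forces $ty \in B_1$, hence $y \in B_1$. The equations therefore split into two subsystems $\Sigma_1$ over $B_1$ and $\Sigma_2$ over $B_2$, whose constants lie in $A_1$ and $A_2$ respectively, together with the variables that are forced into each summand. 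By purity of $A_i \le_p B_i$, each $\Sigma_i$ has a solution in $A_i$, and combining these gives a solution in $A_1 \coprod A_2$. Components containing no constants may be solved in $B_i$ and then must be handled separately. For such a component we need a solution in $A_i$ itself. It exists because a constant-free system solvable in $B_i$ is trivially satisfiable by projecting through a pure embedding: it is a pp-sentence, so it holds in $A_i$ as well. Part $(1)$ is the special case $A_2 = \varnothing$, or rather it follows the same way with $A$ sitting in $B_i$. Note also that in $(1)$ a solution may lie partly in $B_{3-i}$ through constant-free components, which are again handled by the pp-sentence transfer from $B_{3-i}$ to $A$ and then into $B_i$.

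For $(3)$, the embedding part uses the characterization of $\rho$. Suppose $[b]_\rho = [b']_\rho$ with $b, b' \in B_2$. A $\rho$-chain $b = s_1 p_1, \dots, s_n q_n = b'$ alternates between summands through elements of the form $s f_1(a)$ and $s f_2(a)$. Because $f_1$ is injective, equalities in $B_1$ of the form $s f_1(a) = t f_1(a')$ pull back to $s a = t a'$ in $A$, and then push forward to $B_2$, which gives $b = b'$. The purity part is the main obstacle. Take a finite system $\Sigma$ over $B_2$ that is solvable in $P$. We lift the solution to $B_1 \coprod B_2$ by expanding each equation $[u] = [v]$ into its finite $\rho$-chain. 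This yields a larger finite system in which new variables range over $A$ via the witnesses $a$, and in which each equation either lives entirely in $B_1$ or entirely in $B_2$. The part living in $B_1$ involves constants only through $f_1(A)$-witness variables, so it is a pp-condition on the tuple of $A$-witnesses that holds in $B_1$. By purity of $f_1$ it then holds in $A$, so we may replace the $B_1$-part by a solution in $A$ and map it into $B_2$ via $f_2$. This produces a solution of $\Sigma$ in $B_2$. The care needed is in bookkeeping the chains so that each $B_1$-portion is a pp-formula over the $A$-parameters; once that is set up, the argument is a direct application of the Fact.
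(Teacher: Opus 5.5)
Your proposal is correct and follows essentially the paper's route for (3): lift the solution in $P$ through the explicit description of $\rho$, split the lifted system by summand, solve the part on the far side in $A$ by purity with the $A$-witnesses as parameters, and push that solution across via the other map. The only difference is that the paper first normalizes $\rho$-chains to at most two links (cases (i)--(iv)) before splitting, whereas you carry full chains with witness parameters, which works equally well; for (1) and (2), which the paper leaves as ``similar arguments,'' your component-splitting argument is the natural one, and you rightly use purity of $f_{3-i}$ for the constant-free components lying in $B_{3-i}$ in (1) (drop the aside that (1) is the case $A_2=\varnothing$, since acts are nonempty).
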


        \begin{proof}
            We provide a model-theoretic proof\footnote{
                A proof using category theory can be found in \cite[Proposition 15]{adamek2004pure}.
            }
            of (3) in the cases when $f_2$ is an embedding or a pure embedding.
            Similar arguments can be used to prove (1) and (2).
            The presented method of manipulating systems of equations originates from \cite[Lemma 4.4]{cox2025cofibrantgenerationpuremonomorphisms}, and further instances of this novel technique can be found in the proofs Theorem \ref{thm: gtp = pp} and Lemma \ref{lem: pure closure rel M}.

            Without loss of generality, we may assume that $B_1 \cap B_2 = \varnothing$, so that $P = (B_1 \cup B_2)/\rho$ where $\rho$ is generated by $\{(f_1(a),f_2(a)) \mid a \in A\}$.
            We first characterize the $\rho$-equivalence classes.
            If $(b,b') \in \rho$ with $b \neq b'$, then there exists some $0 < n < \omega$ and some $s_i \in S$, $j_i \in \{1,2\}$, and $a_i \in A$ for $1 \leq i \leq n$ such that the following system of equations holds: 
            \[
                b = s_1f_{j_1}(a_1), \quad s_if_{3-j_i}(a_i) = s_{i+1}f_{j_{i+1}}(a_{i+1}) ~\text{for}~ 1 \leq  i \leq n - 1, \quad \text{and} \quad s_nf_{3-j_n}(a_n) = b'.
            \]
            Without loss of generality, we may choose $s_i = 1$ for all $i$ because $f_1,f_2$ are $S$-homomorphisms.
            Moreover, since $B_1 \cap B_2 = \varnothing$, observe that $b \in B_{j_1}$, $b' \in B_{3-j_n}$, and $3-j_i = j_{i+1}$ for $1 \leq i \leq n-1$.
            Thus, the system of equations becomes: 
            \[
                b = f_{k_0}(a_1), \quad f_{k_i}(a_i) = f_{k_i}(a_{i+1}) ~\text{for}~ 1 \leq  i \leq n - 1, \quad \text{and} \quad f_{k_n}(a_n) = b',
            \]
            where $k_i = k_0$ if $i$ is even and $k_i = 3-k_0$ if $i$ is odd.
            Since $f_2$ is an embedding, we can always reduce the system whenever an equation of the form $f_2(a_i) = f_2(a_{i+1})$ appears and, ultimately, to one of the following:
            \begin{enumerate}[label = (\roman*)]
                \item $b = f_1(a) = b'$ for some $a \in A$, if $b,b' \in B_1$;

                \item $b = f_2(a)$, $f_1(a) = f_1(a')$, and $f_2(a') = b'$ for some $a,a' \in A$, if $b,b' \in B_2$;

                \item $b = f_2(a)$ and $f_1(a) = b'$ for some $a \in A$, if $b \in B_2$ and $b' \in B_1$; or

                \item $b = f_1(a)$ and $f_2(a) = b'$ for some $a \in A$, if $b \in B_1$ and $b' \in B_2$.
            \end{enumerate}
            Since $b \neq b'$, we know that case (i) is impossible.

            Observe that $g_1$ is an embedding because $g_1(b) = [b]_\rho$ for every  $b \in B_1$ and each $\rho$-equivalence class contains at most one element of $B_1$ by the argument above.
            
            Now suppose that $f_2$ is a pure embedding, and let $\Sigma$ be a finite system of equations in variables $x_1, \dots, x_n, y_1, \dots, y_m$ with constants from $g_1(B_1)$ and solution $g_1(b_1),\dots,g_1(b_n)$, $g_2(b'_1),\dots,g_2(b'_m) \in P$ where each $b_i \in B_1$ and each $b'_i \in B_2 \setminus f_2(A)$.
            From $\Sigma$, we build two related systems of equations $\Delta_1$ and $\Delta_2$ in variables $x_1, \dots, x_n$ and $y_1,\dots,y_m$, respectively, as follows.
            \begin{itemize}
                \item If $(sx_i = tx_j) \in \Sigma$, then $g_1(sb_i) = g_1(tb_j)$.
                Since $g_1$ is an embedding, we know that $sb_i = tb_j$.
                In this case, let $(sx_i = tx_j) \in \Delta_1$.

                \item If $(sy_i = ty_j) \in \Sigma$ and $sb'_i = tb'_j$, then let $(sy_i = ty_j) \in \Delta_2$.

                \item If $(sy_i = ty_j) \in \Sigma$ and $sb'_i \neq tb'_j$, then $g_2(sb'_i) = g_2(tb'_j)$.
                Hence Case (ii) above applies, and so $sb'_i = f_2(a)$, $f_1(a) = f_1(a')$, and $f_2(a') = tb'_j$ for some $a,a'\in A$.
                In this case, let $(sy_i = f_2(a)),(ty_j = f_2(a')) \in \Delta_2$.

                \item If $(sx_i = g_1(b)) \in \Sigma$, then $g_1(sb_i) = g_1(b)$.
                Since $g_1$ is an embedding, we know that $sb_i = b$.
                In this case, let $(sx_i = b) \in \Delta_1$.

                \item If $(sy_i = g_1(b)) \in \Sigma$, then $g_2(sb'_i) = g_1(b)$.
                Hence Case (iii) above applies, and so $sb'_i = f_2(a)$ and $f_1(a) = b$ for some $a \in A$.
                In this case, let $(sy_i = f_2(a)) \in \Delta_2$.

                \item If $(sx_i = ty_j) \in \Sigma$, then $g_1(sb_i) = g_2(tb'_j)$.
                Hence Case (iv) above applies, and so $sb_i = f_1(a)$ and $f_2(a) = tb'_j$ for some $a \in A$.
                In this case, let $(sx_i = f_1(a)) \in \Delta_1$ and $(ty_j = f_2(a)) \in \Delta_2$.
            \end{itemize}
            By construction, $b_1, \dots, b_n \in B_1$ is a solution to $\Delta_1$.
            Moreover, $\Delta_2$ has constants from $f_2(A)$ and solution $b'_1, \dots, b'_m \in B_2$.
            Because $f_2$ is a pure embedding, it follows that $f_2^{-1}(\Delta_2)$ has a solution $a_1, \dots, a_m \in A$.
            It can be easily verified by checking each case that the constructions of $\Delta_1$ and $\Delta_2$ are such that this implies that $b_1, \dots,b_n$, $f_1(a_1), \dots, f_1(a_m)$ is a solution to $g_1^{-1}(\Sigma)$.
        \end{proof}

    For a fixed monoid $S$, there is a unique (up to isomorphism) $S$-act with a single element: the \emph{zero act} $\Theta = \{\theta\}$ defined by $s\theta = \theta$ for every $s \in S$.
    $\Theta$ appears to play a vital role in understanding pure embeddings and, as such, will appear repeatedly throughout this paper.
    It can be easily seen that any $S$-homomorphism $f : \Theta \rightarrow A$ is a pure embedding.
    
%-----%-----%-----%
%-----%-----%-----%
%-----%-----%-----%
%-----%-----%-----%
\subsection{Abstract Elementary Classes} \label{sub: prelims - AECs}
%-----%-----%-----%
%-----%-----%-----%
%-----%-----%-----%
%-----%-----%-----%

    An \emph{abstract elementary class \textup{(}AEC\textup{)}}  $\K = (K,\leq_\K)$ is a class $K$ of structures in some fixed language together with a partial order $\leq_\K$ on $K$ contained in the substructure relation\footnote{
        In the context of $S$-acts, substructures are the same as subacts.
    }
    such that $\K$ $\dots$
    \begin{itemize}
        \item $\dots$is \emph{closed under isomorphisms}, i.e., $M \in K$ and $M \cong N$ implies that $N \in K$, and $M \leq_\K N$ and $f: N \cong L$ implies that $f(M) \leq_\K L$,

        \item $\dots$satisfies the \emph{coherence property}, i.e., if $M,N \leq_\K L$ and $M$ is a substructure of $N$, then $M \leq_\K N$,

        \item $\dots$satisfies the \emph{L\"owenheim-Skolem axiom}, i.e., there is some cardinal $\lambda$ such that for every $N \in K$ and every subset $X \subseteq N$, there is some $M \leq_\K N$ containing $X$ with $\card{M} \leq \card{X} + \lambda$, and

        \item $\dots$satisfies the \emph{Tarski-Vaught axioms}, i.e., if $\delta$ is a limit ordinal and $\{M_i \mid i < \delta\} \subseteq K$ is a $\leq_\K$-increasing continuous chain\footnote{
            This means that $M_j \leq_\K M_i$ for every $j < i < \delta$, and $M_i = \bigcup_{j < i} M_j$ whenever $i < \delta$ is a limit ordinal.
        }, then $\bigcup_{i < \delta} M_i \in K$ and $M_j \leq_\K \bigcup_{i < \delta}M_i$ for every $j < \delta$; moreover, $M_i \leq_\K N$ for every $i < \delta$ implies that $\bigcup_{i < \delta} M_i \leq_\K N$.
    \end{itemize}
    The least infinite cardinal $\lambda$ for which $\K$ satisfies the L\"owenheim-Skolem axiom is called the \emph{L\"owenheim-Skolem number} of $\K$ and is denoted $\LS(\K)$.
    For cardinals $\lambda \geq \LS(\K)$, we write $\K_\lambda$ to denote $\{M \in K \mid \card{M} = \lambda\}$, $\K_{\leq\lambda}$ to denote $\{M \in K \mid \card{M} \leq \lambda\}$, and $\K_{<\lambda}$ to denote $\K_{\leq\lambda}\setminus\K_\lambda$.
    A function $f : M \rightarrow N$ is called a \emph{$\K$-embedding} if $f : M \cong f(M)$ and $f(M) \leq_\K N$.

    We introduce some standard properties that an AEC $\K$ might satisfy.

    \begin{definition}\label{def: AP,JEP,NMM}
        We say that $\K$ has \emph{amalgamation} if, for every pair of $\K$-embeddings $N_1 \xleftarrow{f_1} M \xrightarrow{f_2} N_2$, there is a pair of $\K$-embeddings $N_1 \xrightarrow{g_1} L \xleftarrow{g_2} N_2$ such that $g_1 \circ f_1 = g_2 \circ f_2$.
        If, in addition, we may choose $g_1,g_2$ such that $g_1(N_1) \cap g_2(N_2) = (g_1\circ f_1)(M) ~ ( = (g_2 \circ f_2)(M))$, then we say that $\K$ has \emph{disjoint amalgamation}.
        We say that $\K$ has \emph{joint embedding} if, for every $M_1,M_2 \in K$, there are $\K$-embeddings $M_1 \xrightarrow{f_1} N \xleftarrow{f_2} M_2$.
        We say that $\K$ has \emph{no maximal models} if, for every $M \in K$, there is some $N \in K$ such that $M \leq_\K N$ and $M \subsetneq N$.
    \end{definition}

    The AEC $\K_{\SAct} = (\SAct,\leq)$ of (left) $S$-acts with the subact relation is well-investigated and we recall some of its basic properties for the convenience of the reader.

    \begin{fact}[{\cite[Lemma 2.4]{mazariarmida2025abstractelementaryclassacts}}]
        $\K_{\SAct}$ is an AEC with $\LS(\K_{\SAct}) = \cardS + \aleph_0$, disjoint amalgamation, joint embedding, and no maximal models.
    \end{fact}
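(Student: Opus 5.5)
The plan is to verify the AEC axioms directly, since $\leq$ is just the subact relation, and then to obtain the three structural properties from the coproduct and pushout constructions recalled in Subsection \ref{sub: prelims - acts}.

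\emph{The AEC axioms.} Closure under isomorphisms is immediate, because an $S$-isomorphism carries subacts to subacts. Coherence is also immediate: if $M,N \leq L$ and $M \subseteq N$, then the action on $M$ is the restriction of the action on $L$, which is the action on $N$, so $M \leq N$. For the Tarski--Vaught axioms, let $\{M_i \mid i<\delta\}$ be an increasing chain of subacts. Its union is closed under each $s \in S$ and satisfies the act axioms elementwise, and each $M_j$ is a subact of the union. Moreover, if every $M_i \leq N$, then the union is a subset of $N$ closed under the action, hence a subact of $N$.

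\emph{The L\"owenheim--Skolem number.} For the Löwenheim--Skolem axiom, given $X \subseteq N$ with $X \neq \varnothing$, take $SX$. It is the smallest subact containing $X$ and has $\card{SX} \leq \card{X}\cdot\cardS \leq \card{X} + \cardS + \aleph_0$. (If $X=\varnothing$, any cyclic subact $Sa$ works.) This gives $\LS(\K_{\SAct}) \leq \cardS+\aleph_0$. For the reverse inequality, consider $S$ acting on itself by left multiplication and $X=\{1\}$: every subact containing $1$ contains $S1 = S$. So no cardinal $\lambda < \cardS$ works, and the infinite cardinal $\LS$ must equal $\cardS+\aleph_0$.

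\emph{Amalgamation, joint embedding, no maximal models.} For disjoint amalgamation, given embeddings $N_1 \xleftarrow{f_1} M \xrightarrow{f_2} N_2$, take the pushout $N_1 \xrightarrow{g_1} P \xleftarrow{g_2} N_2$. By Fact \ref{fact: preserving purity}(3), both $g_1$ and $g_2$ are embeddings. For disjointness, I use the case analysis in the proof of that fact, with $N_i$ and $M$ in place of $B_i$ and $A$. If $g_1(b) = g_2(b')$ with $b \in N_1$ and $b' \in N_2$, then case (iv) applies (or case (iii) symmetrically), so $b = f_1(a)$ for some $a\in M$, and the common value lies in $(g_1\circ f_1)(M)$. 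Joint embedding follows from the coproduct $M_1 \coprod M_2$ with its two inclusions. For no maximal models, embed $M$ into $M \coprod \Theta$, which properly extends it.

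I expect the only steps needing genuine care to be the lower bound on the Löwenheim--Skolem number and the disjointness of the pushout. The latter is settled by reusing the reduction of $\rho$-chains already carried out in the proof of Fact \ref{fact: preserving purity}.
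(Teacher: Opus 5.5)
Your proposal is correct. The paper cites this fact from \cite{mazariarmida2025abstractelementaryclassacts} without proof, and your argument is the standard one: it matches how the paper proves the analogous lemma for $\KActp$, by verifying the AEC axioms directly, getting disjoint amalgamation from the pushout and Fact~\ref{fact: preserving purity}(3), and using coproducts for joint embedding and no maximal models. You go slightly further than the paper does there, since you explicitly justify disjointness via case (iv) of the proof of Fact~\ref{fact: preserving purity} and the lower bound $\LS(\K_{\SAct}) \geq \cardS+\aleph_0$ via $S$ acting on itself with $X=\{1\}$.
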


    In this paper, we will almost exclusively work with the class of (left) $S$-acts with the pure subact relation and will denote this by $\KActp = (\SAct,\leq_p)$.
    It turns out that $\KActp$ is also an AEC that satisfies some, but not all, of the same properties as $\K_{\SAct}$.

    \begin{lemma}\label{lem: AP, NMM, and LS(K)}
        $\KActp$ is an AEC with $\LS(\KActp) = \cardS + \aleph_0$, disjoint amalgamation, and no maximal models.
    \end{lemma}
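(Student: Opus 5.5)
We verify the AEC axioms for $\KActp = (\SAct,\leq_p)$ one at a time, then use Fact~\ref{fact: preserving purity} for the amalgamation-type properties.

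\textbf{AEC axioms.} Closure under isomorphisms is immediate, since $S$-isomorphisms preserve and reflect pp-formulas. For coherence, suppose $M,N \leq_p L$ with $M \leq N$. If a pp-formula $\varphi(\overline{a})$ with $\overline{a}\in M$ holds in $N$, it holds in $L$, and hence in $M$ because $M \leq_p L$. So $M \leq_p N$. The Tarski--Vaught axioms follow because pp-formulas are existential. Let $\{M_i\}_{i<\delta}$ be a $\leq_p$-increasing chain with union $M_\delta$. If $M_\delta \models \varphi(\overline{a})$ with $\overline{a}\in M_j$, the finitely many witnesses lie in some $M_i$ with $i \geq j$, and purity of $M_j \leq_p M_i$ brings the formula down to $M_j$. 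The same argument, with $N$ in place of $M_\delta$, gives the clause that $M_i \leq_p N$ for all $i$ implies $M_\delta \leq_p N$.

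\textbf{Löwenheim--Skolem number.} Given $X \subseteq N$, we close under the action and under witnesses in $\omega$ steps. Set $X_0 = SX$ (assuming $X \neq \varnothing$; otherwise start from any single element of $N$). Given $X_n$, let $X_{n+1}$ be the subact generated by $X_n$ together with one solution in $N$ of each finite system of equations with constants from $X_n$ that is solvable in $N$. There are at most $\card{X_n}+\cardS+\aleph_0$ such systems, so $M = \bigcup_n X_n$ has size at most $\card{X}+\cardS+\aleph_0$ and satisfies $M \leq_p N$. For the lower bound $\LS(\KActp)\geq \cardS+\aleph_0$, note that $S$ acting on itself has no proper nonempty subact containing $1$. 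Hence no pure subact of $S$ containing $1$ has size below $\cardS$, and $\LS$ is infinite by definition.

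\textbf{Amalgamation and no maximal models.} Given pure embeddings $N_1 \xleftarrow{f_1} M \xrightarrow{f_2} N_2$, take the pushout $P$. By Fact~\ref{fact: preserving purity}(3), both $g_1$ and $g_2$ are pure embeddings, and $g_1 f_1 = g_2 f_2$. For disjointness, use the case analysis from the proof of that fact. If $g_1(b)=g_2(b')$ with $b\in B_1$ and $b'\in B_2$, case (iv) gives $b=f_1(a)$ and $b'=f_2(a)$ for some $a\in A$, so the common element lies in $g_1f_1(M)$. For no maximal models, take $N = M \coprod \Theta$ or $N = M\coprod M$; Fact~\ref{fact: preserving purity}(1) makes $M \leq_p N$, and $M \subsetneq N$.

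\textbf{Main obstacle.} The disjointness claim is the hardest step. It relies on the $\rho$-class characterization in the proof of Fact~\ref{fact: preserving purity}(3), which was carried out under the assumption that $f_2$ is an embedding. That assumption holds here, so cases (iii) and (iv) apply and the argument goes through.
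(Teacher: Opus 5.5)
Your proof follows the paper's route. You use the finite-witness chain argument for Tarski--Vaught, the pushout with Fact~\ref{fact: preserving purity}(3) for amalgamation, and $M \coprod M$ via Fact~\ref{fact: preserving purity}(1) for no maximal models. The places where you add detail are correct:
\begin{itemize}
\item the explicit $\omega$-step closure replaces the paper's appeal to first-order downward L\"owenheim--Skolem;
\item the choice $N = S$, $X = \{1\}$ gives the lower bound $\LS(\KActp) \geq \cardS + \aleph_0$, which the paper leaves implicit;
\item reading disjointness off case (iv) of the $\rho$-class analysis is exactly what the paper's one-line appeal to Fact~\ref{fact: preserving purity}(3) tacitly uses.
\end{itemize}

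The alternative $N = M \coprod \Theta$ must be deleted, however. Fact~\ref{fact: preserving purity}(1) does not yield it. That fact concerns the image of a single act $A$ purely embedded in both summands, and any act embedding into $\Theta$ is $\Theta$ itself, so at best you learn that a copy of $\Theta$ is pure in $M \coprod \Theta$. Worse, the claim $M \leq_p M \coprod \Theta$ is false in general. Suppose $S$ has no local zeros, and pick a finite $F \subseteq S$ with no $\zeta$ satisfying $s\zeta = \zeta$ for all $s \in F$. Then the constant-free system $\{sx = x \mid s \in F\}$ is solved by $\theta$ in $S \coprod \Theta$ but has no solution in $S$, so $S \not\leq_p S \coprod \Theta$. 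This is exactly the necessity half of Proposition~\ref{prop: JEP <-> local zeros}, and it already happens for $S = (\mathbb{N},+)$. Since $M \coprod M$ does work, your proof is complete once the $\Theta$ option is dropped.
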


    \begin{proof}
        Observe that the L\"owenheim-Skolem axiom follows from the Downward L\"owenheim-Skolem Theorem for first-order model theory.
        We show that $\KActp$ satisfies the Tarski-Vaught axioms.
        The other axioms can be shown similarly.
        Let $\delta$ be a limit ordinal, $\{A_i \mid i < \delta\} \subseteq S\text{-Act}$ be a $\leq_p$-increasing continuous chain, and $A = \bigcup_{i < \delta}A_i$.
    
        For fixed $i < \delta$, let $\Sigma$ be a finite system of equations with constants from $A_i$ and a solution in $A$.
        Since the solution contains only finitely many elements, we may choose $i \leq j < \delta$ so that it is contained in $A_j$.
        But because $A_i \leq_p A_j$, we have that $\Sigma$ is solvable in $A_i$ and so $A_i \leq_p A$.
        Moreover, if $B$ is an $S$-act such that $A_i \leq_p B$ for every $i < \delta$,
        then a similar argument shows that $A \leq_p B$.
        
        Disjoint amalgamation follows from Fact \ref{fact: preserving purity}(3), and no maximal models follows from Fact \ref{fact: preserving purity}(1) with $A = B_1 = B_2$ and $f_1 = f_2 = \text{id}_A$.
    \end{proof}

    Unlike $\K_{\SAct}$ and most natural examples of AECs, $\KActp$ in general fails to have joint embedding.
    As the following proposition implies, this even occurs for monoids such as the non-negative integers under addition.
    However, we can classify the monoids $S$ for which $\KActp$ has joint embedding.

    \begin{definition}[{\cite[Definition 3.6.5]{kilp2011monoids}}]
        We say that a monoid $S$ has \emph{local zeros} if, for every finite subset $F \subseteq S$, there is some $\zeta \in S$ such that $s\zeta = \zeta$ for every $s \in F$.
    \end{definition}

    \begin{prop}\label{prop: JEP <-> local zeros}
        $\KActp$ has joint embedding if and only if $S$ has local zeros.
    \end{prop}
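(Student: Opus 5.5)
The plan is to reduce joint embedding to a statement about the zero act $\Theta$. Since every $S$-homomorphism out of $\Theta$ is a pure embedding, the key observation is that $\KActp$ has joint embedding if and only if every $S$-act $A$ admits a pure embedding into some act containing a fixed point, i.e. an element $\theta$ with $s\theta=\theta$ for all $s$.

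\emph{Reduction.} First, if joint embedding holds, then jointly embedding $A$ and $\Theta$ into some $N$ gives $A\leq_p N$ with $N$ containing a fixed point. Conversely, suppose each $A_i$ ($i=1,2$) purely embeds into some $N_i$ containing a fixed point $\theta_i$. The maps $\Theta\to N_i$ sending $\theta$ to $\theta_i$ are pure embeddings, so disjoint amalgamation (Lemma \ref{lem: AP, NMM, and LS(K)}) over $\Theta$ yields an $L$ with $N_1,N_2$ purely embedded in $L$. Composing, $A_1$ and $A_2$ purely embed into $L$.

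\emph{Necessity of local zeros.} Apply the reduction to $A=S$. There is a pure embedding $S\leq_p N$ with a fixed point $\theta\in N$. Given a finite $F\subseteq S$, consider the system $\{sx=x \mid s\in F\}\cup\{x=1\cdot y\}$, or more simply the pp-formula $\exists x\,\bigwedge_{s\in F}(sx=x)$ with no constants. This formula has a solution $x=\theta$ in $N$, but purity only reflects formulas with parameters from $S$, so a bare sentence is not controlled. We therefore need a constant from $S$ in the system. Use the formula $\varphi(y)=\exists x\,(\bigwedge_{s\in F} sx=x \wedge x = x)$ with a dummy parameter, or better $\exists x\,\exists z\,(\bigwedge_{s\in F}sx=x)$ with $y=1$ appearing trivially. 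Purity transfers such an $\exists$-statement because it is a pp-formula whose free variable $y$ can be instantiated at $1\in S$. This gives $\zeta\in S$ with $s\zeta=\zeta$ for all $s\in F$.

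\emph{Sufficiency.} Assume $S$ has local zeros. For any $A$, let $N=A\coprod\Theta$. By Fact \ref{fact: preserving purity}(2), with $g_1=\id_A$ and $g_2$ the inclusion of $\varnothing$, this needs care since acts are nonempty. Instead I check directly that $A\leq_p A\coprod\Theta$. Take a finite system $\Sigma$ with constants in $A$ and a solution in $A\coprod\Theta$. The variables assigned to $\theta$ occur only in equations $sx=ty$ in which both sides equal $\theta$, hence only together with other $\theta$-variables. Let $F$ be the finite set of elements of $S$ occurring in $\Sigma$. Pick $a\in A$ and a local zero $\zeta$ for $F$, and reassign every $\theta$-variable to $\zeta a$. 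Then each equation $sx=ty$ among these variables becomes $s\zeta a=\zeta a=t\zeta a$, so it holds, and the equations involving only $A$-variables are unaffected. This gives a solution in $A$, so $A\leq_p N$, and the reduction finishes the proof.

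The main obstacle is the necessity direction: purity must be applied to a formula that genuinely has parameters. This is resolved by noting that pp-formulas may carry free variables that are unused, so that $A\models\varphi(a)$ iff $N\models\varphi(a)$ applies to the sentence.
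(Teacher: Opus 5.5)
Your proof is correct and follows essentially the paper's argument: for necessity you jointly embed $S$ and $\Theta$ and pull the system $\{sx = x \mid s \in F\}$ back along the pure embedding of $S$, and for sufficiency you show $A \leq_p A \coprod \Theta$ by reassigning the $\theta$-variables to $\zeta a$ for a local zero $\zeta$ and then amalgamate over $\Theta$. The dummy-variable detour in the necessity step is harmless but unnecessary, since purity as defined in the paper already applies to systems with no constants, which is exactly how the paper uses it.
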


        \begin{proof}
            \noindent\underline{Necessity:} Suppose $\KActp$ has joint embedding.
            Then, in particular, there exists an $S$-act $A$ and some pure embeddings $f : S \rightarrow A$ and $g : \Theta \rightarrow A$.
            Without loss of generality, we may assume that $g = \id_{\Theta}$.
            Let $F \subseteq S$ be a finite subset, and define a system of equations $\Sigma = \{sx = x \mid s \in F\}$.
            Observe that $\theta \in \Theta \leq A$ is a solution to $\Sigma$.
            Thus, since $f$ is a pure embedding and $\Sigma$ has no constants, $S$ also contains a solution to $f^{-1}(\Sigma) = \Sigma$.

            \noindent\underline{Sufficiency:} Suppose $S$ has local zeros, and let $A$ be an $S$-act.
            Without loss of generality, we may assume that $A \cap \Theta = \varnothing$.
            It is clear that $\Theta \leq_p A \cup \Theta$, and so it is enough to show that $A \leq_p A \cup \Theta$.
            This is the case because, if $B$ is another $S$-act with $B \leq_p B \cup \Theta$, then $A$ and $B$ can be jointly embedded into the pushout $A \cup \Theta \xrightarrow{f_1} P \xleftarrow{f_2} B \cup \Theta$ of $A \cup \Theta \xleftarrow{\id} \Theta \xrightarrow{\id} B \cup \Theta$ as in the following diagram
            \[
            \begin{tikzcd}
                A \arrow[r, phantom, "\leq_p"] & A \cup \Theta \arrow[r, "f_1"] & P \\
                & \Theta \arrow[u, phantom, sloped, "\leq_p"] \arrow[r, phantom, "\leq_p"] & B \cup \Theta \arrow[u, "f_2"] \\
                & & B \arrow[u, phantom, sloped, "\leq_p"]
            \end{tikzcd}
            \]
            where $f_1$ and $f_2$ are pure embeddings by Fact \ref{fact: preserving purity}(3).
            
            To see that $A \leq_p A \cup \Theta$, let $\Sigma$ be a finite system of equations in variables $x,y_1,\dots,y_n$ with constants from $A$ and solution $\theta,a_1,\dots,a_n$ with each $a_i \in A$. (This also covers the more general case of a finite system of equations in variables $x_1,\ldots,x_m,y_1,\dots,y_n$ with constants from $A$ and solution $\theta,\ldots,\theta,a_1,\dots,a_n$ with $a_i \in A$ via the identification $x:= x_1=\ldots=x_m$.)
            Since $\theta \notin A$, we know there is no equation in $\Sigma$ of the form $sx = a$ or $sx = ty_i$ for any $s,t \in S$ and $a \in A$.
            Writing $\Sigma_x$ for the system of equations from $\Sigma$ that contain the variable $x$ and $\Sigma_y$ for the system of equations that contain $y_i$ for some $1 \leq i \leq n$, this implies that $\Sigma_x$ has no constants and $\Sigma_x \cap \Sigma_y = \varnothing$.
            Thus, since $a_1,\dots,a_n$ is a solution to $\Sigma_y$ by construction and $\Sigma = \Sigma_x \cup \Sigma_y$, it suffices to show that $A$ contains a solution to $\Sigma_x$.
            However, if we let $F$ be the (necessarily finite) set of all $s \in S$ such that the term $sx$ occurs in some equation in $\Sigma_x$ and $\zeta \in S$ be the local zero associated to $F$, then $\zeta a$ is a solution to $\Sigma_x$ for any $a \in A$.
        \end{proof}

    Recall that a class of structures in a fixed language is \emph{categorical} in a cardinal $\lambda$ if it contains a unique model (up to isomorphism) of cardinality $\lambda$.
    The next result quickly follows from Proposition \ref{prop: JEP <-> local zeros}.

    \begin{corollary}\label{cor: JEP for G-sets}
        Let $S$ be a group.
        The following are equivalent.
        \begin{enumerate}
            \item[$(1)$] $S$ is trivial.

            \item[$(2)$] $\KActp$ has joint embedding.

            \item[$(3)$] $\SAct$ is categorical in some $\lambda \geq \cardS+\aleph_0$.

            \item[$(4)$] $\SAct$ is categorical in all $\lambda \geq \cardS + \aleph_0$.
        \end{enumerate}
    \end{corollary}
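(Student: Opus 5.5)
The plan is to prove the cycle $(1)\Rightarrow(4)\Rightarrow(3)\Rightarrow(1)$ and, separately, $(1)\Leftrightarrow(2)$.

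\emph{The equivalence $(1)\Leftrightarrow(2)$.} By Proposition \ref{prop: JEP <-> local zeros}, it suffices to show that a group $S$ has local zeros if and only if it is trivial. If $S=\{1\}$, then $\zeta=1$ is a local zero for every finite $F$. Conversely, suppose $S$ has local zeros and let $s\in S$. Applying the definition to $F=\{s\}$ gives $\zeta$ with $s\zeta=\zeta$. Multiplying on the right by $\zeta^{-1}$ yields $s=1$, so $S$ is trivial.

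\emph{The implication $(1)\Rightarrow(4)$.} If $S$ is trivial, an $S$-act is just a nonempty set. Two $S$-acts of the same cardinality are therefore isomorphic via any bijection, since every function commutes with the identity action. Hence $\SAct$ is categorical in every infinite $\lambda$, in particular in all $\lambda\ge \cardS+\aleph_0=\aleph_0$. The implication $(4)\Rightarrow(3)$ is trivial.

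\emph{The implication $(3)\Rightarrow(1)$, the only step needing an idea.} Suppose $S$ is a nontrivial group and $\lambda\ge \cardS+\aleph_0$. I would exhibit two non-isomorphic $S$-acts of cardinality $\lambda$. Let $A$ be the disjoint union of $\lambda$ copies of the zero act $\Theta$; this is a set of size $\lambda$ on which $S$ acts trivially. Let $B$ be the disjoint union of $\lambda$ copies of $S$ acting on itself by left multiplication; then $\card{B}=\lambda\cdot\cardS=\lambda$. Since $S$ is nontrivial, pick $s\neq 1$. Every $a\in A$ satisfies $sa=a$, whereas in $B$ we have $sx=x$ only if $s=1$ by cancellation in a group. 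Being a fixed point of $s$ is preserved by isomorphisms, so $A\not\cong B$, and $\SAct$ is categorical in no such $\lambda$.

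The main point is choosing the two witnesses for $(3)\Rightarrow(1)$; the fixed-point invariant separates them immediately, so I expect no real obstacle anywhere.
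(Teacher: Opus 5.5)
Your proposal is correct and matches the paper's proof: $(1)\Leftrightarrow(2)$ via Proposition \ref{prop: JEP <-> local zeros}, with you spelling out the cancellation argument showing that a group with local zeros is trivial. For $(3)\Rightarrow(1)$ you use the same witnesses $\coprod_\lambda S$ and $\coprod_\lambda \Theta$, and the fixed-point invariant for some $s \neq 1$ is exactly what shows they cannot be isomorphic when $S$ is nontrivial.
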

    % Proof: (1)->(2), (1)->(3), (1)->(4), (4)->(3) are clear
    % (2)->(1) follows from Prop
    % (3)->(2) uses DLS, NMM, and AP broken down in cases by cardiniality

        \begin{proof}
            The equivalence between (1) and (2) follows immediately from Proposition \ref{prop: JEP <-> local zeros}.
            It is clear that (1) implies (4), and (4) implies (3).
            Moreover,
            (3) implies (1) as $\coprod_\lambda S$ and $\coprod_\lambda \Theta$ are two $S$-acts of cardinality~$\lambda$, and
            $\coprod_\lambda S \cong \coprod_\lambda \Theta$
            implies that $S$ must act trivially on $\coprod_\lambda S$, so $S$ is trivial.
        \end{proof}
        
    Even when $S$ does not have local zeros, $\KActp$ can be described as the union of other AECs that have joint embedding.
    This is formalized as follows.
    Define a relation $\sim$ on $\KActp$ by setting $A \sim B$ if and only if $A$ and $B$ can be jointly embedded, i.e., there are pure embeddings $f : A \rightarrow C$ and $g : B \rightarrow C$ for some $S$-act $C$.
    It is clear that $\sim$ is reflexive and symmetric, while transitivity follows from amalgamation\footnote{
        The proof of transitivity is similar to the beginning of the ``Sufficiency'' part of the proof of Proposition \ref{prop: JEP <-> local zeros} (see the diagram).
    }.
    Hence $\sim$ is an equivalence relation on $\SAct$.
    If $K'$ is a $\sim$-equivalence class of $\SAct$, then we refer to $(K',\leq_p)$ as a \emph{branch} of $\KActp$.

    \begin{fact}\label{fact: branch AECs}
        Each branch of $\KActp$ is an AEC with disjoint amalgamation, joint embedding, no maximal models, and L\"owenheim-Skolem number $\cardS+\aleph_0$.
        Moreover, there are at most $2^{\cardS+\aleph_0}$ branches of $\KActp$.
    \end{fact}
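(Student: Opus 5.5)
We need to prove Fact \ref{fact: branch AECs}: each branch is an AEC with disjoint amalgamation, joint embedding, no maximal models, and $\LS = \cardS+\aleph_0$, and there are at most $2^{\cardS+\aleph_0}$ branches.

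The plan is to show that each branch $K'$ is closed under $\leq_p$ in both directions, so that the AEC axioms and structural properties transfer from $\KActp$ (Lemma \ref{lem: AP, NMM, and LS(K)}). The basic observation is that if $A \leq_p B$ then $A \sim B$, witnessed by $C = B$ with the inclusion and the identity.

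We then verify the individual properties as follows.

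\begin{itemize}
\item \textbf{Closure and coherence.} Closure under isomorphism is immediate, and coherence is inherited from $\KActp$.
\item \textbf{Tarski--Vaught.} If $\{A_i\}_{i<\delta}$ is a $\leq_p$-chain in $K'$, its union $A$ satisfies $A_0 \leq_p A$, so $A \sim A_0$ and hence $A \in K'$. The remaining chain axioms are inherited.
\item \textbf{L\"owenheim--Skolem.} For $N \in K'$ and $X \subseteq N$, the model $M \leq_p N$ given by the L\"owenheim--Skolem axiom satisfies $M \sim N$. So the number of $K'$ is at most $\cardS+\aleph_0$. Equality holds because the number of an AEC is by definition at least $\aleph_0$ and at least the size of the language, which is $\cardS$ here.
\item \textbf{Disjoint amalgamation.} Amalgams of models of $K'$ formed in $\KActp$ contain them purely, so they lie in $K'$.
\item \textbf{No maximal models.} Take $N = A\coprod A$, which purely extends $A$ by Fact \ref{fact: preserving purity}(1) and so lies in $K'$.
\item \textbf{Joint embedding.} This is the defining property of a $\sim$-class: if $A \sim B$ via $C$, then $C \sim A$, so $C \in K'$.
\end{itemize}

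The step needing real thought is the bound on the number of branches. By the L\"owenheim--Skolem axiom, every $A$ contains some $M \leq_p A$ with $\card{M} \leq \cardS+\aleph_0$, and then $A \sim M$. So every branch contains an act of size at most $\mu := \cardS+\aleph_0$.

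Up to isomorphism, an $S$-act on a set of size at most $\mu$ is determined by a function $S \times \mu \to \mu$. There are at most $\mu^{\mu}=2^{\mu}$ such functions. Isomorphic acts lie in the same branch, so mapping each branch to the isomorphism type of one of its small members is an injection into a set of size at most $2^{\mu}$. This yields at most $2^{\cardS+\aleph_0}$ branches.

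No single step is a serious obstacle. The only subtlety is making sure that every witness model produced (union, L\"owenheim--Skolem submodel, amalgam, extension) stays in the same branch, and each time this follows from the basic observation that $\leq_p$ implies $\sim$.
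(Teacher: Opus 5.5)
Your proof is correct, and it is the routine argument the paper leaves implicit, since it states this Fact without proof: every property transfers from Lemma \ref{lem: AP, NMM, and LS(K)} because $\leq_p$-related acts are $\sim$-equivalent, and the bound on branches comes from choosing in each branch a member of size at most $\cardS+\aleph_0$ and counting isomorphism types. The only soft spot is your lower bound on the L\"owenheim--Skolem number: the paper defines $\LS(\K)$ as the least infinite cardinal witnessing the axiom, without the usual convention $\LS(\K)\geq|\tau(\K)|$, so under its literal wording you need one more step. You can supply it by noting that $A\leq_p A\coprod S$ for every $A\in K'$, because a constant-free system solvable in $S$ is solvable in $A$ via the homomorphism $s\mapsto sa$; hence $A\coprod S$ lies in the branch, and any pure subact of it containing $1\in S$ contains all of $S$.
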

    
    We suspect that $2^{\cardS+\aleph_0}$ is usually a gross overestimate of the number of branches of $\KActp$.
    
    \begin{question}
        Is there a ``nice'' classification of all monoids $S$ for which $\KActp$ has only finitely many branches? Countably many? 
    \end{question}

    Further properties of $\KActp$ require the notion of Galois types: a semantic generalization of first-order types to AECs introduced by Shelah in \cite{shelah2006universal}.

    \begin{definition}
        Let $\K = (K ,\leq_\K)$ be an AEC.
        \begin{itemize}
            \item A \emph{pre-type} in $\K$ is a triple $(\overline{b},X,N)$ where $N \in K$, $X \subseteq N$ is a subset, and $\overline{b} \in N^{<\infty}$.
            The class of pre-types in $\K$ is denoted by $\K^3$.

            \item We define a reflexive and symmetric relation $E^\K_{at}$ on $\K^3$ by $(\overline{b}_1,X_1,N_1) E^\K_{at} (\overline{b}_2,X_2,N_2)$ if and only if $X \coloneqq X_1 = X_2$ and there exist $\K$-embeddings $f_i : N_i \rightarrow N$ with $f_i\upharpoonright X = \id_X$ for $i = 1,2$ such that $f_1(\overline{b}_1) = f_2(\overline{b}_2)$.

            \item Let $E^\K$ be the transitive closure of $E^\K_{at}$.
            For $(\overline{b},X,N) \in \K^3$, we define the \emph{Galois type of $\overline{b}$ over $X$ in $N$} to be the $E^\K$-equivalence class of $(\overline{b},X,N)$ and denote it by $\gtp_\K(\overline{b}/X;N)$.

            \item For $M \in K$ and $\alpha$ an ordinal, we write $\gS^\alpha_\K(M) = \{\gtp_\K(\overline{b}/M;N) \mid M \leq_K N, ~ \overline{b} \in N^\alpha\}$.

            \item For $p = \gtp_\K(\overline{b}/X;N)$ and $Y \subseteq X$, we write $p \upharpoonright Y = \gtp(\overline{b}/Y;N)$.
        \end{itemize}
        Usually, $\K$ will be clear from the context and omitted.
        Additionally, we write $\gS(M)$ when $\alpha = 1$.
    \end{definition}

    \begin{remark}
        Observe that $(\overline{b}_1,X_1,N_1) E^\K_{at} (\overline{b}_2,X_2,N_2)$ if and only if $X \coloneqq X_1 = X_2$ and there exist $\K$-embeddings $f_i : N_i \rightarrow N$ with $f_1\upharpoonright X = f_2 \upharpoonright X$ and $f_1(\overline{b}_1) = f_2(\overline{b}_2)$.
        This is the case as we may copy back $N$ over $f_1$ as needed.
        We will often use this equivalent description of $E^\K_{at}$.
    \end{remark}

    \begin{remark}
        For any AEC $\K$ with amalgamation, $E_{at}^\K = E^\K$.
        In particular, this is true for $\KActp$.
    \end{remark}

    In $\KActp$, we can characterize Galois types using pp-formulas.
    Recall that for a structure $N$, a subset $M \subseteq N$, and $\overline{b}\in N^{<\omega}$, the \emph{pp-type of $\overline{b}$ over $M$ in $N$} is the set of all formulas $\varphi(\overline{x},\overline{m})$, where $\varphi(\overline{x},\overline{y})$ is a pp-formula in variables $\overline{x},\overline{y}$ and $\overline{m}\in M^{\ell(\overline{y})}$ is a vector of parameters, such that $N$ satisfies $\varphi(\overline{b},\overline{m})$.
    As with pure embeddings, this can be phrased in terms of solutions to systems of equations:

    \begin{fact}
        Let $A,B,M$ be $S$-acts such that $M \subseteq A \cap B$.
        If $a \in A$ and $b \in B$, then $\pp(b/M;B) \subseteq \pp(a/M;A)$ if and only if every finite system of equations in variables $x,y_1, \dots, y_n$ with constants from $M$ and solution $b, b_1, \dots,b_n \in B$ has a solution $a,a_1, \dots, a_n \in A$.
    \end{fact}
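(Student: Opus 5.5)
The statement is essentially an unwinding of definitions, so I would prove each direction by translating between pp-formulas and finite systems of equations. The dictionary comes from the normal form recalled earlier. Every pp-formula $\varphi(x,\overline{z})$ is, up to logical equivalence, of the form $\exists y_1\dots y_n\,\psi(x,\overline{y},\overline{z})$, where $\psi$ is a finite conjunction of equations $s u = t v$ with $u,v$ among the variables. Substituting parameters $\overline{m}\in M$ for $\overline{z}$ turns $\psi(x,\overline{y},\overline{m})$ into a finite system of equations in the variables $x,y_1,\dots,y_n$ with constants from $M$. Its equations then have the forms $sx=ty_i$, $sy_i=ty_j$, $sx=tm$, $sy_i=tm$, or $sm=tm'$.

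Two normalisations are needed so that such a system matches the systems allowed in the statement.
\begin{itemize}
\item An equation $sx = tm$ with $tm\in M$ is exactly an equation "$sx = c$" with constant $c = tm$. This uses that $M$ is closed under the action, which is automatic in all our applications since $M$ will be a subact. If $M$ is merely a subset, I would instead add a fresh variable $w$ with equations $w=m$ and $sx = tw$, reducing to constants from $M$ itself.
\item Equations $sm=tm'$ involving only constants hold or fail independently of the ambient act, because the action on $M$ is the same inside $A$ and inside $B$. They can therefore be dropped, after noting that they are true in $B$ whenever the formula holds there.
\end{itemize}
Conversely, any finite system $\Sigma$ in variables $x,y_1,\dots,y_n$ with constants from $M$ gives the pp-formula $\exists \overline{y}\,\bigwedge\Sigma$, with the constants treated as parameters.

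($\Rightarrow$) Suppose $\pp(b/M;B)\subseteq\pp(a/M;A)$, and let $\Sigma$ be a finite system solved in $B$ by $b,b_1,\dots,b_n$. Then $B\models \exists\overline{y}\bigwedge\Sigma(b,\overline{y})$, so this formula lies in $\pp(b/M;B)$ and hence in $\pp(a/M;A)$. This yields witnesses $a_1,\dots,a_n\in A$ such that $a,a_1,\dots,a_n$ solves $\Sigma$.

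($\Leftarrow$) Suppose the condition on systems holds, and take $\varphi(x,\overline{m})\in\pp(b/M;B)$. Put $\varphi$ in normal form and choose witnesses $b_1,\dots,b_n\in B$. Normalise the resulting system as above and apply the hypothesis to get $a_1,\dots,a_n\in A$ solving it together with $a$. The constant-only equations that were dropped are still true, since they concern only $M$. Hence $A\models\varphi(a,\overline{m})$.

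The only point needing care is the bookkeeping with constants: the action on $M$ must be the same in $A$ and $B$, and terms $tm$ must be reduced to constants. I expect no genuine obstacle beyond this.
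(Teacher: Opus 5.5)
Your proposal is correct. It is exactly the definitional unwinding the paper has in mind: the paper records this as a Fact without proof, as the pp-type analogue of its reformulation of purity via finite systems of equations, and your translation between $\exists\overline{y}\bigwedge\Sigma$ and normal-form pp-formulas (with $tm$ replaced by the constant $c=tm$) is that argument.

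The one caveat concerns dropping the constant-only equations $sm=tm'$. This step uses that $M$ is a subact of both $A$ and $B$, which is the reading under which the paper applies the Fact, e.g.\ in Theorem~\ref{thm: gtp = pp}. Under a bare set-inclusion reading, handle those equations with the fresh-variable trick you already use for $sx=tm$, i.e.\ add $w=m$, $w'=m'$, $sw=tw'$.
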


    We are now ready to show that Galois types coincide with pp-types in $\KActp$.
    
    \begin{theorem}\label{thm: gtp = pp}
        Let $A,B,M$ be $S$-acts such that $M \leq A,B$.
        If $a \in A$ and $b \in B$, then $\gtp(a/M;A) = \gtp(b/M;B)$ if and only if $\pp(a/M;A) = \pp(b/M;B)$.
    \end{theorem}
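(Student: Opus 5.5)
We prove the forward direction by transporting pp-types along pure embeddings. For the converse we build an explicit amalgam of $A$ and $B$ over a common subact containing $M$ and identifying $a$ with $b$, and verify purity by the system-of-equations manipulation used in the proof of Fact \ref{fact: preserving purity}(3).

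\emph{Forward direction.} Since $\KActp$ has amalgamation, $E_{at}=E$. So $\gtp(a/M;A)=\gtp(b/M;B)$ yields pure embeddings $f:A\to N$ and $g:B\to N$ with $f\upharpoonright M=g\upharpoonright M=\id_M$ (after copying) and $f(a)=g(b)$. Pure embeddings preserve and reflect pp-formulas with parameters. Hence
\[
\pp(a/M;A)=\pp(f(a)/M;N)=\pp(g(b)/M;N)=\pp(b/M;B).
\]

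\emph{Converse: setting up the amalgam.} Assume the pp-types are equal. Let $C_A=M\cup Sa\le A$ and $C_B=M\cup Sb\le B$. Define $h:C_A\to C_B$ by $h\upharpoonright M=\id_M$ and $h(sa)=sb$.
\begin{itemize}
\item The atomic formulas $sx=tx$ and $sx=m$ with $m\in M$ belong to the pp-types. Equality of the pp-types therefore shows that $h$ is well defined, bijective and an $S$-homomorphism: $sa=ta$ iff $sb=tb$, and $sa=m$ iff $sb=m$.
\item Form the pushout $A\xrightarrow{g_1}P\xleftarrow{g_2}B$ of $A\hookleftarrow C_A\xrightarrow{h}C_B\hookrightarrow B$. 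Both legs are embeddings, so Fact \ref{fact: preserving purity}(3) makes $g_1$ and $g_2$ embeddings.
\item $g_1$ and $g_2$ agree on $M$, and $g_1(a)=g_2(b)$.
\end{itemize}
It then suffices to show that $g_1$ and $g_2$ are pure, since then $(a,M,A)\,E_{at}\,(b,M,B)$. By symmetry we only treat $g_1$.

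\emph{Purity of $g_1$.} We reuse the case analysis of $\rho$-classes from the proof of Fact \ref{fact: preserving purity}(3). Since both legs of the span are injective, any identification between an element of $A$ and an element of $B$ in $P$ passes through a single element of $C_A\cong C_B$. Now let $\Sigma$ be a finite system with constants from $g_1(A)$ and a solution in $P$ of the form $g_1(a_1),\dots,g_1(a_n),g_2(b_1),\dots,g_2(b_k)$ with $b_j\notin C_B$.
\begin{itemize}
\item As before, split $\Sigma$ into $\Delta_1$, satisfied in $A$, and $\Delta_2$, with constants from $C_B$ and satisfied in $B$ by $b_1,\dots,b_k$.
\item The new step: every constant of $\Delta_2$ of the form $sb$ is replaced by the term $sy$ for a fresh variable $y$. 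Constants from $M$ are kept. Then $\exists \overline{y}'\,\Delta_2'(y,\overline{y}')$ is a pp-formula with parameters from $M$ satisfied by $b$ in $B$.
\item By equality of pp-types it is satisfied by $a$ in $A$, with witnesses $a'_1,\dots,a'_k\in A$.
\item Substitute $a'_j$ for $b_j$. Every $\Sigma$-equation linking an $x$-variable to a $y$-variable was routed through an element of $C_B$, which was recorded in $\Delta_1$ and $\Delta_2$ via its $h$-preimage in $C_A$. Hence $a_1,\dots,a_n,a'_1,\dots,a'_k$ solves $g_1^{-1}(\Sigma)$.
\end{itemize}

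\emph{Main obstacle.} The delicate point is the bookkeeping in this case analysis. Previously the relevant constants lay in $f_2(A)$ and purity of $f_2$ was invoked. Here they lie in $M\cup Sb$, and we must check that every equation of $\Sigma$ between an $A$-side and a $B$-side element really decomposes through a single element of $C_B$ whose $h$-preimage is the matching term in $a$. We expect this to follow from the injectivity of both legs, exactly as in cases (ii)–(iv) of that proof, but it needs to be verified case by case.
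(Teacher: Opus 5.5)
Your proposal is correct and follows essentially the same route as the paper. Your pushout along $h\colon M\cup Sa\to M\cup Sb$ is the same amalgam as the paper's $(A\cup B)/\rho$ with $\rho$ generated by $(a,b)$, once $A\cap B=M$ is identified. Your replacement of the $B$-side constants $sb$ by terms $sy$ in a distinguished variable, followed by an appeal to equality of pp-types, is exactly the paper's passage from $\Delta_B$ to $\Delta_B'$, where $ub$ becomes $ux$.

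The case check you flag does go through. Injectivity of both legs makes every class of the pushout congruence either a singleton or a pair $\{c,h(c)\}$ with $c\in C_A$, which matches the paper's description of the $\rho$-classes. Substituting $y:=a$ then turns each replaced term $sy$ into $sa=h^{-1}(sb)$, which is the constant recorded on the $\Delta_1$ side.
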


        \begin{proof}
            The forward implication is clear, so suppose $\pp(a/M;A) = \pp(b/M;B)$.
            Without loss of generality, we assume that $A \cap B = M$.
            Let $\rho$ be the $S$-congruence on $A \cup B$ generated by $(a,b)$, and let $\pi_A : A \rightarrow (A \cup B)/\rho$ and $\pi_B : B \rightarrow (A \cup B)/\rho$ be the natural projections.
            It is clear that
            \[\begin{tikzcd}
                A \arrow[r,"\pi_A"] & (A \cup B)/\rho \\
                M \arrow[u, phantom, sloped, "\subseteq"] \arrow[r, phantom, "\subseteq"] & B \arrow[u, swap, "\pi_B"]
            \end{tikzcd}\]
            commutes and $\pi_A(a) = \pi_B(b)$, so it remains to show that $\pi_A$ and $\pi_B$ are pure embeddings.

            Towards this goal, we first explicitly characterize the $\rho$-equivalence classes.
            If $(c,d) \in \rho$ with $c \neq d$, then there exists some $0 < n < \omega$ and some $s_i \in S$ and $p_i,q_i \in A \cup B$ with $\{p_i,q_i\} = \{a,b\}$ for $1 \leq i \leq n$ such that the following system of equations holds: 
            \[
                c = s_1p_1, \quad s_iq_i = s_{i+1}p_{i+1} ~\text{for}~ 1 \leq i \leq n-1, \quad \text{and} \quad s_nq_n = d.
            \]
            Without loss of generality, suppose $n$ is chosen so that the size of such a system is minimal.
            Towards a contradiction, suppose $n \geq 2$ so that $s_1q_1 = s_2p_2$ appears in the system.
            If $q_1 = p_2$, then $p_1 = q_2$ and $(s_1x = s_2x) \in \pp(a/M;A) = \pp(b/M;B)$.
            Hence $c = s_1p_1 = s_2q_2$, contradicting the minimality of $n$.
            If $q_1 \neq p_2$, then $\{q_1,p_2\} = \{a,b\}$
            and so $s_1q_1 = s_2p_2 \in A \cap B = M$.
            Hence $(s_1x = s_2p_2) \in \pp(a/M;A) = \pp(b/M;B)$, and so $c = s_1p_1 = s_2p_2$, again contradicting the minimality of $n$.
            Thus, $n = 1$, meaning that $c = s_1p_1$ and $s_1q_1 = d$.
            It immediately follows that for every $c \in A \cup B$, we have that
            \[
                [c] = \begin{cases}
                \{c\} & \text{ if } ~ c \in (A \cup B) \setminus(Sa \cup Sb), \\
                \{sa,sb\} & \text{ if } ~ c \in \{sa,sb\} ~ \text{for some} ~ s \in S.
            \end{cases}
            \]
            In particular, $\pi_A$ and $\pi_B$ are embeddings because each equivalence class contains at most one element from $A$, resp. $B$.
            This is the case because, if $sb \in A$, then $sb \in A \cap B = M$, and so $(sx = sb) \in \pp(b/M;B) = \pp(a/M;A)$.
            Hence $sa = sb$.
            Similarly, $sa = sb$ if $sa \in B$.
    
            We only show that $\pi_A$ is a pure embedding as the argument for $\pi_B$ is similar.
            Let $\Sigma$ be a finite system of equations in variables $x_1, \dots, x_n$, $y_1, \dots, y_m$, $z_1, \dots, z_k$ with constants from $\pi_A(A)$ and solution $[a_1], \dots, [a_n] \in \pi_A(A\setminus Sa)$, $[b_1], \dots, [b_m] \in \pi_B(B \setminus Sb)$, and $[s_1a], \dots, [s_ka] \in \pi_A(Sa) = \pi_B(Sb)$.
            Let $\Delta_A$ be the subset of $\pi_A^{-1}(\Sigma)$ consisting of equations in $\Sigma$ that do not contain $y_i$ for any $1 \leq i \leq m$, where any constants have been replaced by their unique preimage under $\pi_A$.
            Observe that $a_1, \dots, a_n$, $s_1a, \dots, s_ka \in A$ is a solution to $\Delta_A$ since $\pi_A$ is injective.
            Additionally, let $\Delta_B = \pi_A^{-1}(\Sigma)\setminus \Delta_A$ consist of the equations that do contain $y_i$ for some $1 \leq i \leq m$.
            Without loss of generality, each equation in $\Delta_B$ is of one of the following forms for some $t,u \in S$ and $c \in A \setminus Sa$: $ty_i = uy_j$, $tx_i = uy_j$, $ty_i = uz_j$, $ty_i = c$, $ty_i = ua$.
            From $\Delta_B$, we build a related system of equations $\Delta_B'$ in variables $x,y_1,\dots,y_m$ as follows.
            \begin{itemize}
                \item If $(ty_i = uy_j) \in \Delta_B$, then $[tb_i] = [ub_j]$.
                Since $\pi_B$ is an embedding, we know that $tb_i = ub_j$.
                In this case, let $(ty_i = uy_j) \in \Delta_B'$.
            
                \item If $(tx_i = uy_j) \in \Delta_B$ and $ta_i = ub_j$, then $ta_i \in A \cap B = M$.
                In this case, let $(uy_j = ta_i) \in \Delta_B'$.

                \item If $(tx_i = uy_j) \in \Delta_B$ and $ta_i \neq ub_j$, then there is some $v \in S$ such that $[ta_i] = [ub_j] = \{va,vb\}$.
                Since $\pi_A$ and $\pi_B$ are embeddings, we know that $ta_i = va$ and $ub_j = vb$.
                In this case, let $(uy_j = vx) \in \Delta_B'$.

                \item If $(ty_i = uz_j) \in \Delta_B$, then $[tb_i] = [us_ja] = [us_jb]$.
                Since $\pi_B$ is an embedding, we know that $tb_i = us_jb$.
                In this case, let $(ty_i = us_jx)\in \Delta_B'$.

                \item If $(ty_i = c) \in \Delta_B$, then $[tb_i] = [c]$.
                Either $[c] = \{c\}$ or $[c] = \{va,vb\}$ for some $v \in S$.
                Since $c \notin Sa$ and $\pi_B$ is an embedding, we find that $tb_i = c \in A \cap B = M$ either way.
                In this case, let $(ty_i = c) \in \Delta_B'$.

                \item If $(ty_i = ua) \in \Delta_B$, then $[tb_i] = [ua] = [ub]$.
                Since $\pi_B$ is an embedding, we know that $tb_i = ub$.
                In this case, let $(ty_i = ux) \in \Delta_B'$.
            \end{itemize}
            By construction, $\Delta_B'$ has constants from $M$ and solution $b,b_1, \dots, b_m \in B$.
            Because $\pp(a/M;A) = \pp(b/M;B)$, there are some $c_1, \dots, c_m \in A$ such that $a,c_1, \dots, c_m$ is a solution to $\Delta_B'$ in $A$.
            However, it can be easily verified by checking each case that the construction of $\Delta_B'$ is such that this implies that $a_1, \dots,a_n$, $c_1, \dots,c_m$, $s_1a,\dots,s_ka$ is a solution to $\Delta_B$ and hence to $\pi_A^{-1}(\Sigma) = \Delta_A \cup \Delta_B$.
        \end{proof}

    Tameness is a natural property of AECs that was isolated by Grossberg and VanDieren in \cite{grossberg2006galois} and can be used as a way of measuring the possible complexity of Galois types.
    For a cardinal $\lambda$, we say that an AEC $\K = (K ,\leq_\K)$ is \emph{$(<\lambda)$-tame} if for every $M \in K$ and distinct $p,q \in \gS(M)$, there is some $X \subseteq M$ with $\card{X} < \lambda$ such that $p\upharpoonright{X} \neq q\upharpoonright{X}$.
    The following result is an immediate consequence of Theorem \ref{thm: gtp = pp}.
    
    \begin{corollary}\label{cor: tameness}
        $\KActp$ is $(<\aleph_0)$-tame.
    \end{corollary}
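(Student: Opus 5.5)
We prove the contrapositive form directly: given $M \in \SAct$ and distinct $p,q \in \gS(M)$, we find a finite $X \subseteq M$ with $p\upharpoonright X \neq q\upharpoonright X$. Write $p = \gtp(a/M;A)$ and $q = \gtp(b/M;B)$ with $M \leq_p A$ and $M \leq_p B$. By Theorem \ref{thm: gtp = pp} (which only needs $M \leq A,B$), $p \neq q$ gives $\pp(a/M;A) \neq \pp(b/M;B)$. So some pp-formula $\varphi(x,\overline{m})$ with $\overline{m} \in M^{<\omega}$ lies in one pp-type but not the other. By symmetry, assume $A \models \varphi(a,\overline{m})$ and $B \not\models \varphi(b,\overline{m})$. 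Set $X = \operatorname{ran}(\overline{m})$, which is finite.

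It remains to check that $\gtp(a/X;A) \neq \gtp(b/X;B)$. Suppose instead that they were equal. Since $E^\K$ is the transitive closure of $E_{at}^\K$, there is a finite chain of atomic equivalences over $X$ linking the two pre-types. Each $\K$-embedding in such a chain fixes $X$ pointwise and is a homomorphism, so it preserves pp-formulas with parameters from $X$. Pure embeddings also reflect them. Therefore each atomic step preserves the truth of $\varphi(\,\cdot\,,\overline{m})$ in both directions: if $f_i : N_i \rightarrow N$ are pure embeddings fixing $X$ with $f_1(\overline{b}_1) = f_2(\overline{b}_2)$, then $N_1 \models \varphi(\overline{b}_1,\overline{m})$ iff $N \models \varphi(f_1(\overline{b}_1),\overline{m})$ iff $N_2 \models \varphi(\overline{b}_2,\overline{m})$. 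Running along the chain would give $B \models \varphi(b,\overline{m})$, a contradiction. This is just the easy forward direction of Theorem \ref{thm: gtp = pp} applied with $X$ in place of $M$, and it works for an arbitrary subset $X$ because only fixing $X$ is used.

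The only subtle point is that Theorem \ref{thm: gtp = pp} is stated over subacts $M$, while $X$ is merely a finite subset. This is why the argument uses only the forward implication (Galois-equal implies pp-equal), which holds over arbitrary sets, and applies the nontrivial converse only over $M$ itself. One could instead take $X = S\overline{m}$, but that need not be finite, so we avoid it. With this observation, $\KActp$ is $(<\aleph_0)$-tame.
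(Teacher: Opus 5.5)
Your proof is correct and follows the paper's argument: apply Theorem \ref{thm: gtp = pp} to get a pp-formula $\varphi(x,\overline{m})$ separating the two pp-types, then restrict both Galois types to the finite parameter set $\overline{m}$. The differences are cosmetic: the paper first uses amalgamation to realize both types in a single act, and it only says ``observe'' for the step you spell out, namely that equality of Galois types over a finite set $X$ forces agreement on pp-formulas with parameters from $X$.
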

    
        \begin{proof}
            Let $M$ be an $S$-act and $p,q \in \gS(M)$ be distinct.
            Since $\KActp$ has amalgamation, we may assume that $p = \gtp(a/M;A)$ and $q = \gtp(b/M;A)$ for some $S$-act $A$ and $a,b \in A$.
            By Theorem \ref{thm: gtp = pp}, we know that $\pp(a/M;A) \neq \pp(b/M;A)$.
            Without loss of generality, there is some pp-formula $\varphi(x,\overline{m})$ with parameters $\overline{m}\in M^{<\omega}$ %that is satisfied in $A$ by $a$ but not by $b$.
            such that $A$ satisfies $\varphi(a,\overline{m})$ but not $\varphi(b,\overline{m})$.
            Observe that this implies that $p \upharpoonright \overline{m} \neq q \upharpoonright \overline{m}$.
        \end{proof}
        
    Stability and superstability are two of the key dividing lines in model theory and will be the primary focus of Section \ref{sec: main results}.
    We state the relevant definitions here.
            
    Let $\K$ be an AEC and $\lambda \geq \LS(\K)$ be a cardinal.
    The \emph{stability spectrum} of $\K$, written $\Stab(\K)$, is the class of cardinals $\lambda \geq \LS(\K)$ such that $\card{\gS(M)} \leq \lambda$ for every $M \in \K_{\lambda}$.\footnote{Note that for all cardinals $\lambda \geq \LS(\K)$ and $M \in \K_{\lambda}$ the much weaker estimate $\card{\gS(M)} \leq 2^\lambda$ holds.
    Thus, stability refers to a situation with only ``few'' Galois types.}
    The elements of the stability spectrum are called \emph{stability cardinals}.
    We say that $\K$ is \emph{stable} if $\Stab(\K) \neq \varnothing$, and $\K$ is \emph{stable in $\lambda$} if $\lambda \in \Stab(\K)$.
    Similarly, we say that $\K$ is \emph{superstable} if there is some cardinal $\mu \geq \LS(\K)$ such that $[\mu,\infty) \subseteq \Stab(\K)$.

    \begin{remark}\label{rem: branch stability and tameness}
        Observe that if $\K'$ is a branch of $\KActp$, then $E^{\KActp} \upharpoonright (\K')^3 = E^{\K'}$.
        Hence $\KActp$ is stable in $\lambda$ if and only if every branch is stable in $\lambda$.
        Moreover, Theorem \ref{thm: gtp = pp} holds for Galois types in $\K'$ as well.
        Thus, the argument from Corollary \ref{cor: tameness} can also be used to show that every branch is $(<\aleph_0)$-tame.
    \end{remark}
    
    For $M,N \in \K$, we say that $N$ is \emph{universal over $M$} if $M \leq_\K N$, $\card{M} = \card{N} = \lambda$, and for every $L \in \K_\lambda$ such that $M \leq_\K L$, there is a $\K$-embedding $f : L \rightarrow N$ with $f\upharpoonright M = \id_M$.
    In this case, we write $M \leq_\K^u N$.
    The existence of universal extensions follows from stability.

    \begin{fact}[{\cite[Claim 2.9]{grossberg2006galois}}]\label{fact: stable -> universal exts}
        Let $\K$ be an AEC with amalgamation.
        If $\K$ is stable in $\lambda$, then for every $M \in \K_\lambda$, there is some $N \in \K_\lambda$ that is universal over $M$.
    \end{fact}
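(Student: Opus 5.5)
The plan is the standard two-part argument: first build a candidate $N$ as the union of a chain of length $\lambda$ in which each model realizes every Galois type over its predecessor, then show by a back-and-forth-free, one-element-at-a-time construction that every $L \in \K_\lambda$ with $M \leq_\K L$ embeds into $N$ over $M$.

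\emph{Construction of $N$.} Build a $\leq_\K$-increasing continuous chain $\langle N_i \mid i \leq \lambda\rangle$ in $\K_\lambda$ with $N_0 = M$, $N_\lambda = \bigcup_{i<\lambda} N_i$, and such that every $p \in \gS(N_i)$ is realized in $N_{i+1}$. At successor steps, stability in $\lambda$ lets us enumerate $\gS(N_i) = \{p_k \mid k < \lambda\}$. We then amalgamate the witnesses $(b_k, N_i, L_k)$ one at a time along an increasing continuous chain of length $\lambda$ over $N_i$, using amalgamation at successors and unions at limits. Finally we apply the L\"owenheim--Skolem axiom to cut the result down to size $\lambda$ while keeping $N_i$ and the chosen realizations; since $\lambda \geq \LS(\K)$, the cut-down model is a $\leq_\K$-substructure, so each realization still realizes the same Galois type. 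At limit stages we take unions, which is allowed by the Tarski--Vaught axioms. Set $N = N_\lambda$; then $M \leq_\K N$ and $\card{N} = \lambda$.

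\emph{Universality.} Fix $L \in \K_\lambda$ with $M \leq_\K L$, and enumerate $L \setminus M = \{a_j \mid j < \lambda\}$. We build, by induction on $j \leq \lambda$, models $N^*_j$ and $\K$-embeddings $g_j : L \to N^*_j$ satisfying:
\begin{enumerate}
\item[(a)] $N_j \leq_\K N^*_j$ and $g_j \upharpoonright M = \id_M$;
\item[(b)] the $N^*_j$ form a directed system of $\K$-embeddings $h_{j,k} : N^*_j \to N^*_k$ for $j<k$, each restricting to the identity on $N_j$;
\item[(c)] $h_{j,k}\circ g_j = g_k$ on $M \cup \{a_l \mid l<j\}$;
\item[(d)] $g_j(a_l) \in N_{l+1}$ for all $l<j$.
\end{enumerate}
Start with $N^*_0 = L$ and $g_0 = \id$. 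At the successor step $j \mapsto j+1$, the type $\gtp(g_j(a_j)/N_j; N^*_j)$ is realized by some $b \in N_{j+1}$. Since $\K$ has amalgamation, $E = E_{at}$, so there are $\K$-embeddings $u : N^*_j \to N''$ and $v : N_{j+1} \to N''$ that agree on $N_j$ and satisfy $u(g_j(a_j)) = v(b)$. Copying $N''$ back over $v$, we may assume $v$ is the inclusion. We then set $N^*_{j+1} = N''$, $h_{j,j+1} = u$ and $g_{j+1} = u \circ g_j$. Condition (d) is preserved because $u$ fixes $N_j$, and every earlier image $g_j(a_l)$ lies in $N_{l+1} \subseteq N_j$. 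At limit stages we take the direct limit of the system, which exists in any AEC by the Tarski--Vaught axioms after renaming, and we arrange that it contains $\bigcup_{i<j} N_i = N_j$ as a $\leq_\K$-substructure.

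At stage $\lambda$ we obtain $g_\lambda : L \to N^*_\lambda$ with $N \leq_\K N^*_\lambda$ and $g_\lambda(L) \subseteq N$ by (d). Both $g_\lambda(L)$ and $N$ are $\leq_\K N^*_\lambda$, so coherence gives $g_\lambda(L) \leq_\K N$. Hence $g_\lambda$ is a $\K$-embedding of $L$ into $N$ fixing $M$, as required.

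The main obstacle is the limit-stage bookkeeping. We must ensure that the direct limit of the $N^*_j$ really contains $N_j$ as a $\leq_\K$-substructure via maps that are the identity on each $N_i$. This is why (b) requires every $h_{j,k}$ to fix $N_j$ pointwise, rather than tracking only the images of the finitely many points already placed. With (b) in place, the required compatibility at limits follows from the Tarski--Vaught axioms applied to the resulting chain.
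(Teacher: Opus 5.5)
Your proof is correct. The paper gives no proof of its own and simply cites Grossberg--VanDieren, and your argument is the standard one behind that claim: a length-$\lambda$ chain in which each $N_{i+1}$ realizes all of $\gS(N_i)$, followed by an element-by-element embedding of $L$ through a directed system that fixes each $N_j$ pointwise. The one point you leave implicit is $g_\delta$ at limit stages: since $g_{j+1}=h_{j,j+1}\circ g_j$ holds on all of $L$, you have $g_j=h_{0,j}$ throughout, so you should set $g_\delta=h_{0,\delta}$, because condition (c) alone only determines it on $M\cup\{a_l \mid l<\delta\}$.
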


%-----%-----%-----%
%-----%-----%-----%
%-----%-----%-----%
%-----%-----%-----%
\subsection{Independence Relations} \label{sub: prelims - Ind. Rel.}
%-----%-----%-----%
%-----%-----%-----%
%-----%-----%-----%
%-----%-----%-----%

    We make use of the category-theoretic approach to independence that was introduced in \cite{Lieberman_2019}.

    \begin{definition}[{\cite[Definitions 3.4 and 3.9]{Lieberman_2019}}]
        Let $\mathcal{C}$ be a category.
        An \emph{independence relation} $\nf$ on $\mathcal{C}$ is a collection of commuting squares $(f_1,f_2,g_1,g_2)$
        such that $(f_1,f_2,g_1,g_2) \in \nf$ whenever $(f_1,f_2,h_1,h_2) \in \nf$ and there is a commuting diagram of the following form for some morphisms $k_1$ and $k_2$.
        \[
        \begin{tikzcd}
             & C_1 \arrow[r, dashed, "k_1"] & D \\
            A \arrow[ur, "g_1"] \arrow[rr, "h_1"{pos=0.8}] & & C_2 \arrow[u, dashed, swap, "k_2"] \\
            M \arrow[u, "f_1"] \arrow[r, swap, "f_2"] & B \arrow[uu, swap, "g_2"{pos=0.8}] \arrow[ur, swap, "h_2"] & 
        \end{tikzcd}
        \]
        The elements of $\nf$ are called \emph{independent squares}.
        
        We say that $\nf$ is \emph{symmetric} when $(f_1,f_2,g_1,g_2) \in \nf$ if and only if $(f_2,f_1,g_2,g_1) \in \nf$.
        Moreover, $\nf$ is \emph{weakly stable} if it is symmetric and transitive, and has uniqueness and existence \cite[Definitions 3.15, 3.13, and 3.10]{Lieberman_2019}.\footnote{
            The details of these definitions are omitted as they will not be relevant to this paper.
        }
    \end{definition}

    An \emph{independence relation $\nf$ on an AEC $\K = (K ,\leq_\K)$} means an independence relation $\nf$ on the category $K$ with $\K$-embeddings.
    In this case, we write $\dnf{M}{A}{B}{C}$ when the inclusion maps $(\text{id}_{M \rightarrow A}, \id_{M \rightarrow B}, \id_{A\rightarrow C}, \id_{B \rightarrow C})$ form an independent square.
    
    For AECs, weakly stable independence relations can be extended to sets rather than just models and can be used to define certain properties of Galois types.
    We write $\dnfb{M}{X}{Y}{N}$ when $M \leq_\K N$, $X \cup Y \subseteq N$, and there exist $M_1,M_2,M_3 \in K$ such that $X \subseteq M_1$, $Y \subseteq M_2$, $N \leq_\K M_3$, and $\dnf{M}{M_1}{M_2}{M_3}$.
    For $M \subseteq X$, we say that $\gtp(\overline{b}/X;N)$ \emph{does not} $\nf$-\emph{fork over $M$} if $\dnfb{M}{\operatorname{ran}(\overline{b})}{X}{N}$, where $\operatorname{ran}(\overline{b})$ is the unordered set of entries in $\overline{b}$.
    When $\nf$ is clear from the context, we will just say $\gtp(\overline{b}/X;N)$ \emph{does not fork over $M$}.

    \begin{definition}[{\cite[Definitions 8.6, 8.7, and 3.24]{Lieberman_2019}}]\label{def:2.20}
        Let $\K = (K,\leq_\K)$ be an AEC with a weakly stable independence relation $\nf$.
        \begin{itemize}
            \item We say that $\nf$ has \emph{(right) local character} if for every cardinal $\mu$, there exists a cardinal $\lambda_\mu$ such that for any $N \in K$ and any $p \in \gS^\mu(N)$, there is some $M \leq_\K N$ for which $\card{M} \leq \lambda_\mu$ and $p$ does not $\nf$-fork over $M$.
            If such a cardinal $\lambda_1$ exists for $\mu = 1$, we say that $\nf$ has \emph{strong $\lambda_1$-local character}.

            \item For an infinite cardinal $\lambda$, we say that $\nf$ satisfies the \emph{(right) $(<\lambda)$-witness property} if whenever $M_0 \leq_\K M_i \leq_\K M_3$ for $i = 1,2$ such that $\dnfb{M_0}{M_1}{X}{M_3}$ for every $X \subseteq M_2$ with $\card{X} < \lambda$, then $\dnf{M_0}{M_1}{M_2}{M_3}$.
            We say that $\nf$ has the \emph{witness property} if it has the $(<\lambda)$-witness property for some infinite cardinal $\lambda$.

            \item A weakly stable independence relation is called \emph{stable}\footnote{The definition of stable independence relations given in \cite{Lieberman_2019} is different from, but equivalent to, the one given here (see \cite[Theorem 8.14]{Lieberman_2019}).} 
            if it has local character and the witness property.
        \end{itemize}
    \end{definition}

    We study the following independence relation on $\KActp$ introduced in \cite{cox2025cofibrantgenerationpuremonomorphisms}.
    
    \begin{definition} \label{def:2.21}
        Define an independence relation $\nfp$ on $\KActp$ to be the collection of pullback squares of pure embeddings.
        In the case of an independent square of inclusion maps, we have that $\dnfp{M}{A}{B}{C}$ if and only if $M = A \cap B$.
    \end{definition}

    The following result from \cite{cox2025cofibrantgenerationpuremonomorphisms} shows a connection between $\nfp$ and purity.
    Recall that a commuting square of embeddings $A \leftarrow M \rightarrow B$ and $A \rightarrow C \leftarrow B$ is called \emph{pure effective} if the relevant pushout-induced map $P \rightarrow C$ is a pure embedding.
    \[
        \begin{tikzcd}[column sep = tiny, row sep = tiny]
            A \arrow[rr] \arrow[rd] & & C \\
            & P \arrow[ur, dashed] & \\
            M \arrow[uu] \arrow[rr] & & B \arrow[uu] \arrow[lu]
        \end{tikzcd}
    \]

    \begin{fact}[{\cite[Lemma 4.4]{cox2025cofibrantgenerationpuremonomorphisms}}]\label{fact: pullback + LO -> pure effective}
        Let $A,B,C,M$ be $S$-acts and
        \[
            \begin{tikzcd}
                A \arrow[r, "\operatorname{pure}"] & C \\
                M \arrow[u] \arrow[r] & B \arrow[u, swap, sloped, "\operatorname{pure}"]
            \end{tikzcd}
        \]
        be a pullback square of inclusions such that the maps $A \rightarrow C \leftarrow B$ are pure embeddings.
        If $\operatorname{C}_M^C(A \setminus M) \cap \operatorname{C}_M^C(B \setminus M) = \varnothing$, then the square is pure effective.
    \end{fact}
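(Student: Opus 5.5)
The plan is to verify purity of the pushout-induced map directly via Fact 2.2 (the systems-of-equations characterization of pure embeddings). Since the square is a pullback of inclusions, $M = A \cap B$. So by the remark after the description of pushouts, the pushout may be taken to be $A \cup B \subseteq C$, and the pushout-induced map is just the inclusion $A \cup B \hookrightarrow C$. The task is therefore to show $A \cup B \leq_p C$. Fix a finite system $\Sigma$ in variables $x_1,\dots,x_n$ with constants from $A \cup B$ and a solution $c_1,\dots,c_n \in C$; we must produce a solution in $A \cup B$.

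First classify the solution elements. For $c_i \notin M$, call $c_i$ \emph{$B$-sided} if $\operatorname{C}^C_M(c_i)$ meets $B \setminus M$. Put all other variables (those with $c_i \in M$, or whose component meets $A\setminus M$, or meets neither) into a set $V_A$, and the $B$-sided ones into $V_B$. By the hypothesis $\operatorname{C}_M^C(A \setminus M) \cap \operatorname{C}_M^C(B \setminus M) = \varnothing$, no component is both $A$-sided and $B$-sided. The basic observation used throughout is: if $c \notin M$ and $sc \notin M$, then $c$ and $sc$ are connected outside $M$ by the path $(c, sc)$.

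The key step is to show that every equation of $\Sigma$ either stays on one side or crosses through $M$.
\begin{itemize}
\item Consider an equation $sx_i = tx_j$ with $x_i \in V_A$ and $x_j \in V_B$. Suppose $d := sc_i = tc_j \notin M$.
 \begin{itemize}
 \item If $c_i \in M$, then $sc_i \in M$ because $M$ is a subact, contradicting $d \notin M$.
 \item Otherwise the path $c_i, d, c_j$ shows that $c_i$ lies in the $B$-sided component of $c_j$, contradicting $x_i \in V_A$.
 \end{itemize}
 Hence $d = m \in M$, and we replace the equation by the pair $sx_i = m$, $tx_j = m$.
\item Consider an equation $sx_i = b$ with $x_i \in V_A$ and $b \in B$. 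The same argument forces $b \in M$: if $b \in B \setminus M$, then $c_i \notin M$ and $c_i$ would be connected to $b$, making it $B$-sided.
\item Symmetrically, an equation $tx_j = a$ with $x_j \in V_B$ and $a \in A$ forces $a \in M$, since the component of $c_j$ cannot meet $A \setminus M$.
\end{itemize}
After these replacements we obtain two systems. The system $\Sigma_A$ consists of the equations involving only $V_A$-variables, together with the new constant equations; it has constants from $A$ and the solution $(c_i)_{x_i \in V_A}$ in $C$. The system $\Sigma_B$ is defined likewise, with constants from $B$.

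Since $A \leq_p C$ and $B \leq_p C$, the system $\Sigma_A$ has a solution $(a_i)$ in $A$ and $\Sigma_B$ has a solution $(b_j)$ in $B$. I then check that the combined assignment solves $\Sigma$ in $A \cup B$. Equations internal to one side hold by construction. A crossing equation $sx_i = tx_j$ holds because $sa_i = m = tb_j$ for the common constant $m \in M$.

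The main obstacle I expect is the bookkeeping in the case analysis. In particular, one must make sure that variables valued in $M$ or in ``neutral'' components can be consistently assigned to the $A$ side without creating an equation linking them to $B \setminus M$ through an element outside $M$. The argument above handles this, but each equation type (variable–variable, variable–constant from $A$, variable–constant from $B$) needs to be checked separately.
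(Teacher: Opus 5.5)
Your proof is correct and is essentially the argument of \cite[Lemma 4.4]{cox2025cofibrantgenerationpuremonomorphisms}; the paper cites that lemma rather than reproving it, but reuses the same technique in Fact \ref{fact: preserving purity}(3), Lemma \ref{lem: pure closure rel M} and Fact \ref{fact: LO -> pure unions}. You partition the variables by their connected components outside $M$, use the disjointness hypothesis to force every crossing equation (and every constant from the wrong side) into $M$, and then solve the two halves separately via $A \leq_p C$ and $B \leq_p C$; as the remark after the statement anticipates, your argument never uses purity of $M$ in $A$ or $B$.
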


    \begin{remark}
        The original statement of \cite[Lemma 4.4]{cox2025cofibrantgenerationpuremonomorphisms} differs slightly from Fact \ref{fact: pullback + LO -> pure effective} in that all maps in the commuting square are assumed to be pure embeddings.
        However, careful inspection of the proof reveals that the result holds even when the arrows $A \leftarrow M \rightarrow B$ emanating from the lower left of the commuting square are just embeddings (i.e., not necessarily pure).
    \end{remark}
    
    Using methods from category theory, \cite[Theorem 5.1]{cox2025cofibrantgenerationpuremonomorphisms} established a series of equivalent conditions that include, but are not limited to, $\nfp$ being a stable independence relation on $\KActp$ and the monoid $S$ satisfying the following property.
    \begin{definition}
        A monoid $S$ is said to be \emph{locally linearly preordered} (\emph{LO} for short) if, for every $s,t \in S$, we have $s \in St$ or $t \in Ss$.\footnote{
            The abbreviated terminology LO is adopted from \cite{mustafin1988stability}.
        }
    \end{definition}

    \begin{fact}[{\cite[Theorem 5.1]{cox2025cofibrantgenerationpuremonomorphisms}}]\label{fact: LO <-> stable ind. rel.}
        The following are equivalent.
        \begin{enumerate}
            \item[$(1)$] $S$ is an LO monoid.

            \item[$(2)$] $\nfp$ is a stable independence relation on $\KActp$.

            \item[$(3)$] $\nfp$ is the collection of pure effective squares of pure embeddings.
        \end{enumerate}
    \end{fact}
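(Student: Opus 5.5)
The plan is to prove the cycle $(1)\Rightarrow(3)\Rightarrow(2)\Rightarrow(1)$, with most of the work in $(1)\Rightarrow(3)$ and in the converse showing that non-LO monoids break things.

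\emph{Step 1: $(1)\Rightarrow(3)$.} Every pure effective square of pure embeddings is a pullback: the pushout $P$ embeds into $C$, and in a pushout of embeddings $A\cap B=M$ (Fact \ref{fact: preserving purity}(3) and the description of the $\rho$-classes). So the content is the other inclusion. Take a pullback square of inclusions $M=A\cap B$ with $A,B\leq_p C$. By Fact \ref{fact: pullback + LO -> pure effective}, it suffices to show that $\operatorname{C}_M^C(A\setminus M)\cap\operatorname{C}_M^C(B\setminus M)=\varnothing$ when $S$ is LO. Suppose instead that $a\in A\setminus M$ and $b\in B\setminus M$ are joined by a connecting path $a=c_0,\dots,c_n=b$ outside $M$, and choose one with $n$ minimal. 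The LO property lets me compress the path. If $c_{i-1}=uc_i$ and $c_{i+1}=vc_i$ are both below $c_i$, then LO gives, say, $u=wv$, so $c_{i-1}=wc_{i+1}$ and $c_i$ can be dropped. Repeating this, I expect to reduce to a path that goes ``up'' then ``down'' at most once, i.e.\ there is some $c\in C\setminus M$ with $a=sc$ and $b=tc$.

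Now use LO once more, say $t=ws$, so $b=wa$. Then the one-variable system $x=wa$ has constants in $A$ and a solution in $B\cap SA$. I would then use purity of $B$ (and of $A$) in $C$, applied to systems such as $\{sy=a,\ ty=b\}$, to force $b$, or a suitable $c$, into $A\cap B=M$. That contradicts $b\notin M$.

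\emph{Main obstacle.} Handling the path-compression cleanly, in particular the case where $c$ lies outside both $A$ and $B$, is the step I expect to be hardest. I would first try induction on $n$, with the two cases of LO at each vertex.

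\emph{Step 2: $(3)\Rightarrow(2)$.} Once $\nfp$ coincides with the pure effective squares, it is visibly symmetric. Existence holds via the pushout, whose legs are pure by Fact \ref{fact: preserving purity}(3). Transitivity and uniqueness follow from pasting and uniqueness of pushouts, together with the closure of pure embeddings under composition. For local character and the witness property I would use that purity is a finitary condition, concerning finite systems of equations. Given $N$ and a type, a downward L\"owenheim--Skolem argument should produce $M\leq_p N$ of size at most $\cardS+\aleph_0$ over which the square is pure effective; I would close under witnesses to finitely many systems, $\omega$ times. The $(<\aleph_0)$-witness property follows because failure of purity of $P\to C$ is witnessed by a single finite system, which involves only finitely many elements of $M_2$.

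\emph{Step 3: $(2)\Rightarrow(1)$.} I would argue contrapositively. If $s\notin St$ and $t\notin Ss$, consider the free act on two generators $x,y$ modulo the congruence generated by $(sx,ty)$. Take $A=Sx$ and $B=Sy$, or suitable pure hulls of these. The aim is to exhibit either two non-isomorphic amalgams over a common pullback, violating uniqueness, or a family of types requiring unboundedly large bases, violating local character. I expect the first option to be more direct: the point is to compare the free amalgam with one in which $sx=ty$ is forced, and to check the relevant square in the latter is still a pullback of pure embeddings.
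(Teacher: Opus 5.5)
The paper imports this statement from \cite{cox2025cofibrantgenerationpuremonomorphisms} without proof. Its surrounding lemmas nonetheless show that your Step~3, $(2)\Rightarrow(1)$, does not work as planned.

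\textbf{Step 3.} Your second option cannot succeed. Lemma~\ref{lem: local character} is proved with no hypothesis on $S$, and the same holds for symmetry, transitivity (pasting of pullbacks), existence (pushouts, Fact~\ref{fact: preserving purity}(3)) and the witness property (Lemma~\ref{lem: witness property}). So for a non-LO monoid the only axiom that can fail is uniqueness.

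Your first option uses the wrong configuration. An identification $sx=ty$ with $x$ on the $A$-side and $y$ on the $B$-side puts that element in $Sx\cap Sy\subseteq A\cap B$. So in any pullback square it lies in $M$, and the identification already holds in the free amalgam $A\cup_M B$. Concretely, in $\langle x,y\mid sx=ty\rangle$ the images $A,B$ of $Sx,Sy$ satisfy $C=A\cup B$. Whenever this square is in $\nfp$ at all, it is its own pushout and hence pure effective.

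What breaks uniqueness is the dual shape: an element $z\notin A\cup B$ with $sz=a\in A\setminus M$ and $tz=b\in B\setminus M$. Then $C\models\exists z\,(sz=a\wedge tz=b)$. The pushout $A\cup_M B$ does not satisfy this, since a witness in $A$ would put $b\in A\cap B=M$, and symmetrically for $B$. So $C$ and the pushout have no common pure extension over $A\cup B$. You would still have to arrange purity of $A,B$ in $C$ and of $M$ in $A,B$.

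A shorter route uses the paper's own tools. Under $(2)$, the counting argument of Lemma~\ref{lem: LO -> Stable} goes through verbatim, because it uses only Lemma~\ref{lem: local character} and uniqueness of nonforking extensions for a weakly stable relation. So $\KActp$ would be stable, contradicting Lemma~\ref{lem: nLO -> nStable}. That lemma's pp-formula $\exists z\,(sz=v\wedge tz=w)$ is exactly this configuration, with $z$ above both $v$ and $w$.

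\textbf{Step 1.} The same reversal appears here. Your compression deletes precisely the vertices $c_i$ whose two neighbours both lie in $Sc_i$. So a minimal path cannot have the form $a=sc$, $b=tc$. Together with composing consecutive steps in the same direction, minimality forces either a path $a\to c\leftarrow b$ with $c=sa=tb$, or a single step. Then one of the following holds, each a contradiction:
\begin{itemize}
\item $c\in Sa\cap Sb\subseteq A\cap B=M$;
\item $b\in Sa\subseteq A$;
\item $a\in Sb\subseteq B$.
\end{itemize}
This is the purely combinatorial content of \cite[Lemma~4.6]{cox2025cofibrantgenerationpuremonomorphisms}. No purity of $A$ or $B$ is needed, and the ``main obstacle'' you anticipate never arises. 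Even in your version, $b=wa$ already gives $b\in A$.

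\textbf{Step 2.} Your finite-system argument for the witness property does not transfer solvability from the witnessing extensions back to $M_1\cup M_2$. Under $(3)$ you only need $M_1\cap M_2=M_0$, which is checked elementwise as in Lemma~\ref{lem: witness property}.
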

    A special case of Fact \ref{fact: pullback + LO -> pure effective} will be useful and, together with \cite[Lemma 4.6]{cox2025cofibrantgenerationpuremonomorphisms}, quickly implies the following fact.

    \begin{fact}\label{fact: LO -> pure unions}
        If $S$ is an LO monoid and $A,B \leq_p C$, then $A \cup B \leq_p C$.
    \end{fact}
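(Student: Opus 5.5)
The plan is to verify the purity condition directly: take a finite system of equations $\Sigma$ with constants from $A \cup B$ that has a solution in $C$, and produce a solution in $A \cup B$. The idea is to split the solution into the parts connected (outside $A\cap B$, or more cleverly outside $A \cup B$) to $A$ and to $B$, solve each part separately using $A \leq_p C$ and $B \leq_p C$, and then recombine. The LO hypothesis is what allows the recombination.

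\textbf{Reduction.} First I would set $M = A \cap B$ and work in the pullback square of inclusions $M \to A, B \to C$. I would try to arrange the situation of Fact \ref{fact: pullback + LO -> pure effective}. Since $A,B$ are embedded in $C$ with intersection $M$, the pushout $P$ of $A \leftarrow M \rightarrow B$ is exactly $A \cup B$, with the pushout-induced map being the inclusion $A \cup B \to C$. So showing $A\cup B \le_p C$ is precisely showing that the square is pure effective. By Fact \ref{fact: pullback + LO -> pure effective}, it therefore suffices to check that $\operatorname{C}^C_M(A\setminus M) \cap \operatorname{C}^C_M(B\setminus M) = \varnothing$. This hypothesis need not hold for arbitrary $A,B$, however, so directly it is not enough.

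\textbf{Enlarging $M$.} To handle the general case I would replace $M$ by a larger pure subact, or alternatively prove directly that no element of $A\setminus M$ is connected outside $M$ to an element of $B\setminus M$ when $S$ is LO. The key LO computation goes as follows. Take a connecting path $a=c_0,\dots,c_n=b$ in $C\setminus M$. Consider the consecutive steps $c_i\in Sc_{i+1}$ or $c_{i+1}\in Sc_i$. Using LO, one can show that if $sc = tc'$-type relations occur, then comparing $s,t$ gives $s=ut$ or $t=us$, and one can reroute. I expect this to produce an element of the form $u a = v b$ or $a\in Sb'$ etc. Purity of $A$ in $C$ then pulls the relevant equation (a one-variable system such as $x = v b$ or $ua = vx$, with constants in $A$ or $B$) back into $A$ or $B$. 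This forces some path element into $A\cap B=M$, which is a contradiction.

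\textbf{Main obstacle and fallback.} The main obstacle is this rerouting argument, namely the content of \cite[Lemma 4.6]{cox2025cofibrantgenerationpuremonomorphisms}. If the disjointness fails outright, I would fall back on the direct system-of-equations method used in the proof of Theorem \ref{thm: gtp = pp}. In that method I would first partition the variables of $\Sigma$ by connectivity classes outside $A\cup B$. LO guarantees that each connected component of the solution interacts with constants in a way controlled by a single "dominant" term: in an LO monoid, finitely many elements of $S$ have a common left-divisible minimum, so one can choose it. This lets each component be attached either to $A$ or to $B$, and we solve it via the corresponding purity. Equations linking an $A$-component to a $B$-component would only involve elements forced into $M$, so the two partial solutions glue. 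In either approach, the delicate point is ensuring that mixed equations between $A$- and $B$-constants reduce to constants in $M$, and this is where LO must be used.
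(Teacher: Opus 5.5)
Your main route is the paper's. With $M=A\cap B$, the union $A\cup B$ is the pushout of $A\supseteq M\subseteq B$, so by Fact~\ref{fact: pullback + LO -> pure effective} it suffices to have $\operatorname{C}^C_M(A\setminus M)\cap\operatorname{C}^C_M(B\setminus M)=\varnothing$. The paper takes this directly from \cite[Lemma 4.6]{cox2025cofibrantgenerationpuremonomorphisms}. Two corrections are needed.

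First, your worry that this disjointness ``need not hold for arbitrary $A,B$'' is unfounded. For LO monoids it holds for \emph{any} subacts $A,B\le C$ with $A\cap B=M$; the paper only needs a pullback square of embeddings here. So enlarging $M$ and the fallback are unnecessary.

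Second, your sketch of the LO step is aimed at the wrong place, and purity plays no role in it. At a ``valley'' relation $sc=tc'$, comparing $s$ and $t$ gives nothing: from $s=ut$ you only get $utc=tc'$. LO is used at \emph{peaks}. Take a connecting path $a=c_0,\dots,c_n=b$ outside $M$ of minimal length, with $a\in A\setminus M$ and $b\in B\setminus M$.
\begin{itemize}
\item An interior $c_i$ with $c_{i-1}=sc_i$ and $c_{i+1}=tc_i$ can be deleted, because LO gives $c_{i-1}\in Sc_{i+1}$ or $c_{i+1}\in Sc_{i-1}$.
\item An interior monotone step can be deleted by transitivity.
\item Hence every interior point of a minimal path is a valley.
\item Consecutive valleys are impossible: $c_1$ being a valley gives $c_1\in Sc_2$, which makes $c_2$ a peak or a monotone point if $n\ge 3$. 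So $n\le 2$.
\end{itemize}
Then either $a\in Sb\subseteq B$, or $b\in Sa\subseteq A$, or $c_1\in Sa\cap Sb\subseteq A\cap B=M$. Each is a contradiction, and the argument uses only that $A$ and $B$ are subacts. Your element ``$ua=vb$'' already lies in $M$ for this reason; no equation needs to be pulled back by purity.

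You also do not treat the case $A\cap B=\varnothing$. Then there is no $S$-act $M$ (acts are nonempty), the pushout must be replaced by the coproduct, and Fact~\ref{fact: pullback + LO -> pure effective} does not literally apply. The paper notes that the arguments go through. Concretely, the same path argument gives $\operatorname{C}^C_\varnothing(A)\cap\operatorname{C}^C_\varnothing(B)=\varnothing$. So any finite system with constants from $A\cup B$ and a solution in $C$ splits into a subsystem with constants from $A$ and one with constants from $B$, in disjoint sets of variables. These are solved separately using $A\le_p C$ and $B\le_p C$.
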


        \begin{proof}
            Let $M = A \cap B$.
            If $M \neq \varnothing$, observe that $A \cup B$, together with the inclusion maps, is the pushout of $A \geq M \leq B$.
            In this case, all of the maps (including the pushout-induced map $A \cup B \rightarrow C$) in the following commutative diagram are inclusion maps.
            \[
            \begin{tikzcd}[column sep = tiny, row sep = tiny]
                A \arrow[rr, "\text{pure}"] \arrow[rd] & & C \\
                & A \cup B \arrow[ur, dashed] & \\
                M \arrow[uu] \arrow[rr] & & B \arrow[uu, swap, sloped, "\text{pure}"] \arrow[lu]
            \end{tikzcd}
            \]
            Since $S$ is LO and the outer square of the diagram above is a pullback square of embeddings, it follows from \cite[Lemma 4.6]{cox2025cofibrantgenerationpuremonomorphisms} that $\operatorname{C}_M^C(A \setminus M) \cap \operatorname{C}_M^C(B \setminus M) = \varnothing$.
            Thus, $A \cup B \leq_p C$ by Fact \ref{fact: pullback + LO -> pure effective}.

            If $M = \varnothing$, then the same arguments from \cite[Lemmata 4.4 and 4.6]{cox2025cofibrantgenerationpuremonomorphisms} go through by taking the coproduct rather than the pushout. 
            In particular, following  \cite[Lemma 4.6]{cox2025cofibrantgenerationpuremonomorphisms}, one can deduce $\operatorname{C}_\varnothing^C(A) \cap \operatorname{C}_\varnothing^C(B) = \varnothing$. Then,
            as in \cite[Lemma 4.4]{cox2025cofibrantgenerationpuremonomorphisms}, for any finite system of equations $\Sigma$ in variables $x_1,\ldots,x_n$ with constants from $A\cup B$ and solution in $C$ there exists a partition $(X_A,X_B)$ of $\{x_1,\ldots,x_n\}$ such that $\Sigma$ is partitioned into a system of equations $\Sigma_A$ in variables from $X_A$ with constants from $A$, and a system of equations $\Sigma_B$ in variables from $X_B$ with constants from $B$.
        \end{proof}

%-----%-----%-----%
%-----%-----%-----%
%-----%-----%-----%
%-----%-----%-----%
\section{Stability Cardinals} \label{sec: main results}
%-----%-----%-----%
%-----%-----%-----%
%-----%-----%-----%
%-----%-----%-----%

    In Subsection \ref{sub: main - stability}, 
    we show that LO monoids are exactly the monoids $S$ for which $\KActp$ is stable.
    In Subsection \ref{sub: main - superstability}, we assume $S$ is LO and establish some more stability cardinals for $\KActp$.
    In particular, we characterize superstability in $\KActp$ in terms of algebraic properties of $S$.
    Subsection \ref{sub: main - stability spectrum} then provides a characterization of the stability cardinals for $\KActp$ under some mild set-theoretic assumptions.

%-----%-----%-----%
%-----%-----%-----%
%-----%-----%-----%
%-----%-----%-----%
\subsection{Characterization of Stability} \label{sub: main - stability}
%-----%-----%-----%
%-----%-----%-----%
%-----%-----%-----%
%-----%-----%-----%
    
    Although Fact \ref{fact: LO <-> stable ind. rel.} implies that $\nfp$ has local character when $S$ is LO, we give a direct proof of this as the exact cardinals $\lambda_\mu$ that witness local character will be used later.

    \begin{lemma}\label{lem: local character}
        % If $S$ is an LO monoid and 
        Let $A,B,C$ be $S$-acts such that $A,B \leq_p C$.
        There are $S$-acts $A',M$ such that $A \leq_p A'$, $\dnfp{M}{A'}{B}{C}$, and $\card{M} \leq \card{A} + \cardS + \aleph_0$.
        In particular, $\nfp$ has 
        strong $(\cardS+\aleph_0)$-local character.
    \end{lemma}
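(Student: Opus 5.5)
The plan is a standard closure argument of length $\omega$, alternating Löwenheim--Skolem inside $B$ and inside $C$. We never need $S$ to be LO. Put $\lambda = \card{A} + \cardS + \aleph_0$. Recall that $\dnfp{M}{A'}{B}{C}$ means that all four inclusions $M \leq_p A'$, $M \leq_p B$, $A' \leq_p C$, $B \leq_p C$ are pure and that $M = A' \cap B$.

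We build increasing sequences $(A_n)_{n<\omega}$ and $(M_n)_{n<\omega}$ of $S$-acts, starting with $A_0 = A$.
\begin{enumerate}
\item[(a)] Given $A_n \leq_p C$ with $\card{A_n} \leq \lambda$, apply the Löwenheim--Skolem axiom of $\KActp$ (Lemma \ref{lem: AP, NMM, and LS(K)}, with $\LS(\KActp) = \cardS + \aleph_0$) inside $B$. This gives $M_n \leq_p B$ containing $(A_n \cap B) \cup M_{n-1}$ (with $M_{-1} = \varnothing$) and with $\card{M_n} \leq \lambda$. If $A_n \cap B$ happens to be empty, we simply pick any such $M_n$.
\item[(b)] Apply Löwenheim--Skolem again, now inside $C$, to obtain $A_{n+1} \leq_p C$ containing $A_n \cup M_n$ with $\card{A_{n+1}} \leq \lambda$.
\end{enumerate}
Since all the $A_n$ are pure in $C$, coherence gives $A_n \leq_p A_{n+1}$. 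Likewise $M_n \leq_p M_{n+1}$, since both are pure in $B$.

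Let $A' = \bigcup_n A_n$ and $M = \bigcup_n M_n$.
\begin{enumerate}
\item[(i)] By the Tarski--Vaught axioms, $A' \leq_p C$ and $A = A_0 \leq_p A'$. Similarly $M \leq_p B$, so $M \leq_p C$.
\item[(ii)] We have $M \subseteq A'$ because $M_n \subseteq A_{n+1}$. Since $M \leq_p C$ and $A' \leq_p C$, coherence yields $M \leq_p A'$.
\item[(iii)] For the intersection, $A' \cap B = \bigcup_n (A_n \cap B) \subseteq \bigcup_n M_n = M$. Conversely $M \subseteq A' \cap B$, because each $M_n$ lies in both $B$ and $A_{n+1}$. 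Hence $M = A' \cap B$.
\end{enumerate}
So the square of inclusions $M \leq_p A', B \leq_p C$ is a pullback square of pure embeddings (Definition \ref{def:2.21}), that is, $\dnfp{M}{A'}{B}{C}$. Moreover $\card{M} \leq \aleph_0 \cdot \lambda = \lambda$.

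For strong local character, let $p = \gtp(b/N;L) \in \gS(N)$, where $N \leq_p L$. By Löwenheim--Skolem choose $A \leq_p L$ with $b \in A$ and $\card{A} \leq \cardS + \aleph_0$. Apply the first part with $C = L$ and $B = N$. This gives $M \leq_p N$ with $\card{M} \leq \cardS + \aleph_0$ and $\dnfp{M}{A'}{N}{L}$, where $b \in A'$. Taking $M_1 = A'$, $M_2 = N$ and $M_3 = L$ witnesses $\dnfbp{M}{\{b\}}{N}{L}$, so $p$ does not fork over $M$.

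The only delicate point is getting the exact equality $M = A' \cap B$ rather than a mere inclusion. The alternation above is what handles it: each new piece of $A'$ that meets $B$ is absorbed into the next $M_n$, and the union over $\omega$ stages closes the construction off.
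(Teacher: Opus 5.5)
Your proposal is correct and follows the paper's proof: both alternate Löwenheim--Skolem inside $B$ and inside $C$ for $\omega$ steps, absorbing each $A_n \cap B$ into the next $M$-stage and each $M$-stage into the next $A$-stage. Taking unions then gives $M = A' \cap B$, with the remaining purity conditions coming from Tarski--Vaught and coherence. Your only deviations are cosmetic: you start from $A_0 = A$, and you spell out the deduction of strong $(\cardS+\aleph_0)$-local character, which the paper leaves implicit.
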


        \begin{proof}
            We build $\leq_p$-increasing chains of $S$-acts $\{M_i \mid i < \omega\}$ and $\{A_i \mid i < \omega\}$ such that
            \begin{enumerate}
                \item $M_0 \leq_p B$ and $A \cup M_0 \subseteq A_0 \leq_p C$,

                \item $A_i \cap B \subseteq M_{i+1} \leq_p B$ for every $i < \omega$,

                \item $M_i \subseteq A_i \leq_p C$ for every $i < \omega$, and

                \item $\card{M_i}, \card{A_i} \leq \card{A} + \cardS +\aleph_0$ for every $i < \omega$.
            \end{enumerate}
            In this case, let $M = \bigcup_{i < \omega} M_i$ and $A' = \bigcup_{i < \omega} A_i$.
            Observe that $M = A' \cap B$ by Conditions (2) and~(3).
            Additionally, we have that $M \leq_p B$ by Condition (2), $A' \leq_p C$ by Condition (3), and $M \leq_p A'$ by coherence.
            Hence $\dnfp{M}{A'}{B}{C}$.
            Furthermore, $A \leq_p A'$ by coherence, and $\card{M} \leq \card{A} + \cardS + \aleph_0$ by Condition~(4).
            Therefore, it remains to show that the construction is well-defined, but this can be achieved using the L\"owenheim-Skolem axiom.
        \end{proof}

    \begin{remark}\label{rem: branch ind. rel.}
        If $S$ is an LO monoid and $\K' = (K',\leq_p)$ is a branch of $\KActp$, then $\nfp \upharpoonright K'$ is a stable independence relation on $\K'$ and has strong $(\cardS+\aleph_0)$-local character.
        This is the case because if $\dnfp{M}{A}{B}{C}$, then $A,B,C,M$ are all in the same branch of $\KActp$.
    \end{remark}

    Similarly, Fact \ref{fact: LO <-> stable ind. rel.} also implies that $\KActp$ is stable when $S$ is LO.
    However, the next lemma provides some additional information in the form of some stability cardinals which will be used later.

    \begin{lemma}\label{lem: LO -> Stable}
        Let $S$ be an LO monoid.
        Then $\KActp$ is stable in all cardinals $\lambda \geq \cardS + \aleph_0$ such that $\lambda^{\cardS + \aleph_0} = \lambda$.
    \end{lemma}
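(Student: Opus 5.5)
We will count Galois types over an $S$-act $N$ with $\card{N} = \lambda$ by attaching to each type a small amount of data that determines it. Fix $p = \gtp(b/N;C) \in \gS(N)$, where $N \leq_p C$. Apply Lemma \ref{lem: local character} with $A = Sb$ (or rather a pure subact of $C$ of size at most $\cardS+\aleph_0$ containing $b$, given by the L\"owenheim--Skolem axiom) and $B = N$. This yields $A'$ and $M$ with $b \in A'$, $\dnfp{M}{A'}{N}{C}$, and $\card{M} \leq \cardS + \aleph_0$. Shrinking if necessary via the Löwenheim--Skolem axiom, we may also assume that $\card{A'} \leq \cardS+\aleph_0$. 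The proof of that lemma does produce $A'$ of this size. Thus $p$ does not fork over a subact $M \leq_p N$ of size at most $\cardS+\aleph_0$.

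The main step is to show that $p$ is determined by the pair consisting of $M$ and the isomorphism type of $(A', b)$ over $M$. Concretely, suppose $p_1 = \gtp(b_1/N;C_1)$ and $p_2 = \gtp(b_2/N;C_2)$ come with witnesses $M$, $A'_1$, $A'_2$, and there is an isomorphism $h : A'_1 \cong A'_2$ fixing $M$ with $h(b_1) = b_2$. We want $p_1 = p_2$. Since $S$ is LO, Fact \ref{fact: LO <-> stable ind. rel.} says that independent squares are exactly pure effective squares. So $A'_i \cup N$ is the pushout of $A'_i \geq M \leq N$, and it sits purely inside $C_i$. The pushouts for $i=1,2$ are isomorphic over $N$ via $h \cup \id_N$, and this isomorphism sends $b_1$ to $b_2$. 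Hence
\[
\gtp(b_1/N;C_1) = \gtp(b_1/N;A'_1\cup N) = \gtp(b_2/N;A'_2\cup N) = \gtp(b_2/N;C_2).
\]
Alternatively, one could invoke the uniqueness property of the weakly stable relation $\nfp$. We could also argue via Theorem \ref{thm: gtp = pp}: any system of equations over $N$ solved in $C_1$ reduces, by the connected-components argument of Fact \ref{fact: pullback + LO -> pure effective}, to a part over $M$ inside $A'_1$ and a part inside $N$. I expect this step, verifying that nonforking types with isomorphic small witnesses coincide, to be the main obstacle. If the pushout description gives trouble, I will fall back on the pp-type comparison.

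Counting then finishes the proof. The number of subacts $M \subseteq N$ of size at most $\cardS+\aleph_0$ is at most $\lambda^{\cardS+\aleph_0} = \lambda$. For each such $M$, the number of isomorphism types over $M$ of pairs $(A',b)$ with $M \leq A'$ and $\card{A'} \leq \cardS+\aleph_0$ is at most $2^{\cardS+\aleph_0}$. Indeed, such a pair can be coded by a structure on the universe $\cardS+\aleph_0$ together with an embedding of $M$ and a point. Finally, $2^{\cardS+\aleph_0} \leq \lambda^{\cardS+\aleph_0} = \lambda$. Therefore $\card{\gS(N)} \leq \lambda \cdot \lambda = \lambda$, and $\KActp$ is stable in $\lambda$.
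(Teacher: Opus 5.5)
Your proof is correct. Its skeleton is the same as the paper's: a base $M \leq_p N$ of size at most $\cardS+\aleph_0$ from local character, at most $\lambda^{\cardS+\aleph_0}=\lambda$ choices of base, and at most $2^{\cardS+\aleph_0}\leq\lambda$ possibilities per base. The paper phrases this as a double pigeonhole argument by contradiction, while you give a direct injection.

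The genuine difference is the uniqueness step. The paper determines a type not forking over $M$ by its restriction $p\upharpoonright M$, citing the abstract uniqueness theorem for weakly stable independence relations \cite[Theorem 8.5]{Lieberman_2019}. You instead determine $p$ by a coarser invariant, the isomorphism type of the small witness $(A',b)$ over $M$, and you prove this directly. Since $A'\cap N=M$, Fact \ref{fact: LO -> pure unions} gives $A'\cup N\leq_p C$. Moreover, $h\cup\id_N$ is a well-defined $S$-isomorphism $A'_1\cup N\cong A'_2\cup N$ over $N$ sending $b_1$ to $b_2$; injectivity uses $A'_2\cap N=M$.

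This step, which you flagged as the main obstacle, goes through as written, so you do not need the fallback to pp-types. Shrinking $A'$ is harmless, since any pure subact of $A'$ containing $M\cup\{b\}$ still meets $N$ exactly in $M$. It is also unnecessary, because the proof of Lemma \ref{lem: local character} already gives $\card{A'}\leq\cardS+\aleph_0$.

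What each route buys: yours is more elementary and self-contained. It uses only the concrete fact that the independent amalgam is literally the union $A'\cup N$ sitting purely inside $C$, not the full weakly-stable machinery. The paper's route is shorter given the cited theory. It is also the standard abstract argument, valid in any AEC with amalgamation and a weakly stable independence relation with local character.
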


        \begin{proof}
            Let $B \in (\KActp)_\lambda$.
            Towards a contradiction, suppose $\card{\gS(B)} > \lambda$.
            Then we may choose pairwise distinct $\{p_i \mid i < \lambda^+\} \subseteq \gS(B)$.
            For each $i < \lambda^+$, we may choose $B_i \leq_p B$ such that $\card{B_i} = \cardS + \aleph_0$ and $p_i$ does not fork over $B_i$ by Lemma \ref{lem: local character}.
            Since $\lambda^{\cardS+\aleph_0} = \lambda$, it follows from the pigeonhole principle that there are some $I \subseteq \lambda^+$ and $M \leq_p B$ such that $\card{I} = \lambda^+$, $\card{M} = \cardS + \aleph_0$, and $p_i$ does not fork over $M$ for every $i \in I$.

            Define $\Phi : I \rightarrow \gS(M)$ by $\Phi(i) = p_i\upharpoonright M$.
            Since $\card{\gS(M)} \leq 2^{\card{M} + \aleph_0} \leq \lambda^{\cardS + \aleph_0} = \lambda$, we can apply the pigeonhole principle again to obtain distinct $i,j \in I$ such that $p_i\upharpoonright M = p_j\upharpoonright M$.
            Therefore $p_i = p_j$ by \cite[Theorem 8.5]{Lieberman_2019}, contradicting the original choice of $p_i$ and $p_j$.
        \end{proof}

    The inverse of Lemma \ref{lem: LO -> Stable} can be proven by making use of some notions related to stability in first-order model theory which we recall here (see for example \cite[Section 2]{casanovas2007stable} for further details).

    For a first-order formula $\varphi(v,w)$, a structure $N$, $d \in N$, and $X \subseteq N$, the \emph{complete $\varphi$-type of $d$ over $X$ in $N$} is
    \[
         \textbf{tp}_{\varphi(v,w)}(d/X;N) \coloneqq \{\varphi(v,x) \mid x \in X, ~ N \models \varphi(d,x)\} \cup \{\neg\varphi(v,x) \mid x \in X, ~ N \models \neg\varphi(d,x)\}.
    \]
    We write $\textbf{S}_{\varphi(v,w)}(X;N)$ to denote the set of complete $\varphi$-types over $X$ realized in $N$:
    \[
        \textbf{S}_{\varphi(v,w)}(X;N) \coloneqq \{\textbf{tp}_{\varphi(v,w)}(d/X;N) \mid d \in N\}.
    \]
    For a complete first-order theory $T$, we say that $\varphi(v,w)$ has the \emph{order property (relative to $T$)} if there exists a model $M$ of $T$ and some $b_i,c_i \in M$ for $i < \omega$ such that $M \models \varphi(b_i,c_j)$ if and only if $i \leq j < \omega$.

    \begin{fact}[{\cite[Proposition 2.10]{casanovas2007stable}}]\label{fact: Thm 5.11 Rami} % See 2.4, 2.8, and 2.10 from Casanovas Stability Lecture Notes
        Let $T$ be a complete first-order theory. 
        If $\varphi(v,w)$ has the order property, then for every $\lambda \geq \aleph_0$, there exists a model $N$ of $T$ and a subset $X \subseteq N$ with $\card{X} \leq \lambda$ such that ${\card{\mathbf{S}_{\varphi(v,w)}(X;N)} > \lambda}$.
    \end{fact}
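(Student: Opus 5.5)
The plan is the standard ``Dedekind cuts'' argument: use compactness to stretch the $\omega$-indexed order-property witnesses into witnesses indexed by a dense linear order $I$. We want $|I| \leq \lambda$ but more than $\lambda$ cuts. The parameter set $X$ will be the $c$'s indexed by $I$, and each cut will be realized by some $b$ whose $\varphi$-type over $X$ encodes that cut. Distinct cuts will then give distinct complete $\varphi$-types.

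First, choose the linear order. Let $\mu$ be the least cardinal with $2^\mu > \lambda$. Then $\mu \leq \lambda$, and $2^\kappa \leq \lambda$ for every $\kappa < \mu$, so $2^{<\mu} \leq \lambda$. Let $I = {}^{<\mu}2$ with the lexicographic order (where $\eta \frown 0 < \eta < \eta \frown 1$). Then $\card{I} \leq \lambda$. Each branch $\nu \in {}^{\mu}2$ determines a cut $(L_\nu, U_\nu)$ of $I$, where $L_\nu = \{i \in I \mid i < \nu\}$ and $U_\nu = I \setminus L_\nu$, compared lexicographically. Distinct branches give distinct cuts: if $\nu \neq \nu'$ split at $\eta = \nu \upharpoonright \alpha$, the node $\eta$ itself lies on different sides of the two cuts. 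This yields $2^\mu > \lambda$ distinct cuts.

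Second, apply compactness. Let $M \models T$ witness the order property via $b_k, c_k$ ($k<\omega$). Add new constants $c_i$ for $i \in I$ and $d_\nu$ for $\nu \in {}^\mu 2$, and consider
\[
T \cup \{\varphi(d_\nu, c_i) \mid i \in U_\nu\} \cup \{\neg\varphi(d_\nu, c_i) \mid i \in L_\nu\}.
\]
A finite fragment mentions finitely many $i_1 < \dots < i_n$ in $I$ and finitely many cuts. Interpret $c_{i_r}$ as the element $c_{2r}$ of $M$. For each cut $\nu$, let $r_0$ be least with $i_{r_0} \in U_\nu$ (or $r_0 = n+1$ if there is none), and interpret $d_\nu$ as $b_{2r_0}$. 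Since $M \models \varphi(b_k, c_j)$ iff $k \leq j$, we get $\varphi(d_\nu, c_{i_r})$ iff $r \geq r_0$ iff $i_r \in U_\nu$. So $M$ satisfies the fragment, and by compactness there is a model $N \models T$ realizing the whole set. Note that $T$ is complete, so $N$ is a model of $T$ as required.

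Finally, set $X = \{c_i \mid i \in I\}$, so $\card{X} \leq \lambda$. If $\nu \neq \nu'$, pick $i$ in the symmetric difference of $U_\nu$ and $U_{\nu'}$. Then $\varphi(v, c_i)$ lies in exactly one of $\mathbf{tp}_{\varphi}(d_\nu/X;N)$ and $\mathbf{tp}_\varphi(d_{\nu'}/X;N)$. Hence $\card{\mathbf{S}_{\varphi(v,w)}(X;N)} \geq 2^\mu > \lambda$.

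The only delicate point is the cardinal bookkeeping in the first step, namely getting $\card{I} \leq \lambda$ while having more than $\lambda$ cuts. The choice of minimal $\mu$ handles this. The compactness step is routine once the finite interleaving is written down correctly.
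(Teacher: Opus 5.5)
Your proof is correct and complete. The paper gives no argument for this Fact; it cites Casanovas' notes. So there is no internal proof to match, and what you give is the standard direct Dedekind-cut argument.

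The key choices all check out:
\begin{itemize}
\item With $\mu$ least such that $2^\mu>\lambda$, you get $\card{{}^{<\mu}2}\le\mu\cdot\lambda=\lambda$, since each level $\alpha<\mu$ has $2^{\card{\alpha}}\le\lambda$ nodes.
\item Each $L_\nu$ is an initial segment of $I$, because the lexicographic order extends linearly to ${}^{<\mu}2\cup{}^{\mu}2$. This is exactly what makes the interpretation $d_\nu\mapsto b_{2r_0}$ satisfy the finite fragment.
\item Distinct branches are separated by the node $\nu\upharpoonright\alpha$ at their splitting level. This gives $2^\mu>\lambda$ pairwise distinct $\varphi$-types, all realized in $N$, as the paper's definition of $\mathbf{S}_{\varphi(v,w)}(X;N)$ requires.
\end{itemize}

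Some treatments first convert the order property into the binary tree property, or into an indiscernible sequence indexed by a dense order, and then count branches. Your argument is shorter and self-contained: it uses only compactness and the cardinal arithmetic. The underlying count is the same in both approaches, because the lexicographic order on ${}^{<\mu}2$ is what links a linearly ordered witness to a binary tree.

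One minor slip: $N\models T$ holds simply because $T$ is part of the set you apply compactness to. Completeness of $T$ plays no role there, or anywhere else in your argument.
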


    \begin{lemma}\label{lem: nLO -> nStable}
        Let $S$ be a monoid that is not LO.
        Then $\KActp$ is not stable.
    \end{lemma}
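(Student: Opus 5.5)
The plan is to derive instability from the first-order order property, via Fact \ref{fact: Thm 5.11 Rami}, using a pp-formula $\varphi(v,w)$. Theorem \ref{thm: gtp = pp} then turns distinct $\varphi$-types into distinct Galois types. Since $S$ is not LO, fix $s,t \in S$ with $s \notin St$ and $t \notin Ss$. The candidate formula is
\[
  \varphi(v,w) \;:=\; \exists y\,(sy = v \wedge ty = w),
\]
which is clearly a pp-formula.

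\textbf{Step 1: order property.} I will build an $S$-act $A$ by generators and relations. Take generators $y_{ij}$ for $i \le j < \omega$, and impose relations forcing $s y_{ij} = s y_{ii}$ and $t y_{ij} = t y_{jj}$. Formally, $A$ is the quotient of $\coprod_{i\le j} S y_{ij}$ by the congruence generated by these pairs. Put $b_i = s y_{ii}$ and $c_j = t y_{jj}$. Then $A \models \varphi(b_i,c_j)$ whenever $i \le j$, witnessed by $y_{ij}$.

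The main obstacle is the converse: for $i > j$ there is no $y \in A$ with $sy = b_i$ and $ty = c_j$. I would analyse the congruence with the explicit chain description of $\rho_X$ from Subsection \ref{sub: prelims - acts}, much as in the proof of Theorem \ref{thm: gtp = pp}. Every element has the form $[u y_{kl}]$. A nontrivial identification $[u y_{kl}] = [u' y_{k'l'}]$ must pass through a link of the form $u = vs$ or $u = vt$. Now suppose $s(uy_{kl})$ is linked to $s y_{ii}$ and $t(u y_{kl})$ is linked to $t y_{jj}$ via such chains. Since $s \notin St$, an $s$-multiple can never be rewritten as a $t$-multiple, and since $t \notin Ss$ the reverse also fails. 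Using this, I expect to show that the chain for $s u y_{kl}$ forces $k = i$ (or $u$ to be trivial there), and the chain for $t u y_{kl}$ forces $l = j$. The case $u$ not a unit needs care, and this is where I expect to have to refine the relations or the choice of formula. The conclusion would then be $y_{kl}$ with $k = i > j = l$, which does not exist. So $\varphi$ has the order property relative to $T = \operatorname{Th}(A)$.

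\textbf{Step 2: many types.} Let $\lambda \ge \cardS + \aleph_0$ be arbitrary. By Fact \ref{fact: Thm 5.11 Rami}, there are $N \models T$ and $X \subseteq N$ with $\card{X} \le \lambda$ and $\card{\mathbf{S}_{\varphi}(X;N)} > \lambda$. Here $N$ is an $S$-act, since being an $S$-act is first-order. By the L\"owenheim--Skolem axiom (Lemma \ref{lem: AP, NMM, and LS(K)}), choose $M \le_p N$ with $X \subseteq M$ and $\card{M} = \lambda$; enlarging $M$ if needed gives equality. Realizations $d,d' \in N$ of distinct $\varphi$-types over $X$ differ on some $\varphi(v,x)$ with $x \in X \subseteq M$. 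Hence $\pp(d/M;N) \ne \pp(d'/M;N)$, and Theorem \ref{thm: gtp = pp} gives $\gtp(d/M;N) \ne \gtp(d'/M;N)$. Therefore $\card{\gS(M)} > \lambda$ with $M$ of size $\lambda$, so $\lambda \notin \Stab(\KActp)$. Since $\lambda$ was arbitrary, $\KActp$ is not stable.
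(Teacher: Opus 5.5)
Your Step 2 is exactly the paper's argument: the same pp-formula $\varphi(v,w)=\exists z\,(sz=v\wedge tz=w)$, then Fact \ref{fact: Thm 5.11 Rami}, the L\"owenheim--Skolem axiom, and Theorem \ref{thm: gtp = pp}. It is fine.

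The gap is in Step 1. The paper does not build the order-property witness itself; it cites \cite[Lemma 3.5]{mustafin1988stability}. You instead give a presentation of $A$, but you leave unproved the only nontrivial claim, namely that $A\not\models\varphi(b_i,c_j)$ for $i>j$. You even suggest the relations or the formula may need changing. This is the one place where the failure of LO is actually used, so as written the proof is incomplete.

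Your construction does work. What is missing is an invariant, not a case analysis on units:
\begin{itemize}
\item Every generating pair has the form $(ry_{ab},ry_{a'b'})$ with the same $r\in\{s,t\}$. So each step $vp\mapsto vq$ of a chain in the description of $\rho_X$ changes the generator but never the coefficient.
\item Hence $[uy_{kl}]=[u'y_{k'l'}]$ if and only if $u=u'$ and $y_{kl}$, $y_{k'l'}$ are joined by a path of edges. The $s$-edges are $\{y_{ab},y_{aa}\}$, usable only if $u\in Ss$; the $t$-edges are $\{y_{ab},y_{bb}\}$, usable only if $u\in St$.
\item Suppose $z=[uy_{kl}]$ satisfies $sz=b_i$ and $tz=c_j$. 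From $sz=b_i$ we get $su=s$. Since $s\notin St$, only $s$-edges are usable, and these preserve the first index, so $k=i$.
\item Likewise $tu=t$. Since $t\notin Ss$, only $t$-edges are usable, and these preserve the second index, so $l=j$.
\end{itemize}
Thus $i=k\le l=j$, contradicting $i>j$. Whether $u$ is a unit plays no role. With this argument (or the citation to Mustafin) supplied, your proof is complete and agrees with the paper's.
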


        \begin{proof}
            Since $S$ is not LO, there are some $s,t \in S$ such that $s \notin St$ and $t \notin Ss$.
            Fix $\lambda \geq \cardS+\aleph_0$.
            We show that there is an $S$-act $M$ with $\card{M} = \lambda$ and $\card{\gS(M)} > \lambda$.
            By \cite[Lemma 3.5]{mustafin1988stability}, there exist an $S$-act $A$ and $b_i,c_i \in A$ for $i < \omega$ such that $b_i \neq b_j$ and $c_i \neq c_j$ if $i \neq j$, and
            \[
                A \models \varphi(b_i,c_j) ~ \text{if and only if} ~ i \leq j < \omega
            \]
            where $\varphi(v,w) = \exists z(sz = v ~ \land ~ tz = w)$.
            Hence $\varphi(v,w)$ has the order property relative to $T := \text{Th}(A)$. It follows from Fact \ref{fact: Thm 5.11 Rami} that there exists a model $N$ of $T$ and a subset $X \subseteq N$ with $\card{X} \leq \lambda$ such that $\card{\textbf{S}_{\varphi(v,w)}(X;N)} > \lambda$.
            By the L\"owenheim-Skolem axiom, we may choose $M \leq_p N$ such that $X \subseteq M$ and $\card{M} = \lambda$.
            However, because $\card{\textbf{S}_{\varphi(v,w)}(X;N)} > \lambda$, there are some $\{d_i \mid i < \lambda^+\} \subseteq N$ such that $\textbf{tp}_{\varphi(v,w)}(d_i/X;N) \neq \textbf{tp}_{\varphi(v,w)}(d_j/X;N)$ if $i \neq j$.
            Since $\varphi(v,w)$ is a pp-formula and $X \subseteq M \leq_p N$, this implies that $\pp(d_i/M;N) \neq \pp(d_j/M;N)$ for every $i < j < \lambda^+$.
            Thus, by Theorem \ref{thm: gtp = pp}, we have that $\gtp(d_i/M;N) \neq \gtp(d_j/M;N)$ for $i < j <\lambda^+$.
        \end{proof}

    A characterization of stability in $\KActp$ follows immediately from the previous two lemmata.

    \begin{theorem}\label{thm: LO <-> Stable}
        $\KActp$ is stable if and only if $S$ is an LO monoid.
    \end{theorem}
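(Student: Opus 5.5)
The theorem follows by combining the two preceding lemmata, one for each direction.

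For sufficiency, assume $S$ is LO. By Lemma~\ref{lem: LO -> Stable}, it suffices to exhibit one cardinal $\lambda \geq \cardS + \aleph_0$ with $\lambda^{\cardS+\aleph_0} = \lambda$. Set $\mu = \cardS + \aleph_0$ and $\lambda = 2^{\mu}$. Then $\lambda \geq \mu$, and
\[
\lambda^{\mu} = (2^{\mu})^{\mu} = 2^{\mu\cdot\mu} = 2^{\mu} = \lambda.
\]
Hence $\lambda \in \Stab(\KActp)$, so $\Stab(\KActp) \neq \varnothing$ and $\KActp$ is stable. This uses only cardinal arithmetic and no set-theoretic hypothesis.

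For necessity, we argue by contraposition. If $S$ is not LO, then Lemma~\ref{lem: nLO -> nStable} produces, for every $\lambda \geq \cardS+\aleph_0 = \LS(\KActp)$, an $S$-act $M$ of cardinality $\lambda$ with $\card{\gS(M)} > \lambda$. So no cardinal lies in $\Stab(\KActp)$, and $\KActp$ is not stable.

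Neither direction has a real obstacle; both were handled in the lemmata. The one point to watch is that stability is defined as $\Stab(\KActp) \neq \varnothing$. So the positive direction must exhibit an actual cardinal satisfying the hypothesis of Lemma~\ref{lem: LO -> Stable}, and $2^{\cardS+\aleph_0}$ does this unconditionally.
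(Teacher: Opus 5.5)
Your proposal is correct and matches the paper, which obtains the theorem immediately from Lemma~\ref{lem: LO -> Stable} (sufficiency) and Lemma~\ref{lem: nLO -> nStable} (necessity). Your explicit witness $\lambda = 2^{\cardS+\aleph_0}$ simply makes $\Stab(\KActp) \neq \varnothing$ explicit. The paper uses the same cardinal as the base case in the proof of Theorem~\ref{thm: WO <-> Superstable}.
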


    \begin{remark}
        Part of the main result from \cite{cox2025cofibrantgenerationpuremonomorphisms} says that there is a stable independence relation on $\KActp$ if and only if $S$ is an LO monoid. 
        Since stability is a weaker condition than the existence of a stable independence relation, the reverse direction of Theorem \ref{thm: LO <-> Stable} follows from \cite[Theorem 5.1]{cox2025cofibrantgenerationpuremonomorphisms}.
        However, the forward implication does not follow from \cite[Theorem 5.1]{cox2025cofibrantgenerationpuremonomorphisms} because it is not known if stability implies the existence of a stable independence relation.
    \end{remark}

    \begin{remark}\label{rem: compare stability to mustafin}
        In \cite[Theorem 3.1]{mustafin1988stability}, it was proven that every complete first-order theory of $S$-acts is stable if and only if $S$ is an LO monoid.
        However, this does not imply Theorem \ref{thm: LO <-> Stable} because it is not known if there is pp-quantifier elimination for acts.
        Instead, Theorem \ref{thm: gtp = pp} and the fact that the formula $\varphi(v,w)$ used in \cite[Lemma 3.5]{mustafin1988stability} is a pp-formula are used to establish Theorem \ref{thm: LO <-> Stable}.
    \end{remark}

%-----%-----%-----%
%-----%-----%-----%
%-----%-----%-----%
%-----%-----%-----%

\subsection{Characterization of Superstability}\label{sub: main - superstability} We provide a few additional stability cardinals and characterize superstability.

    Although the L\"owenheim-Skolem axiom guarantees that any subset of an $S$-act is contained in a ``small'' pure subact, the pure subacts obtained are quite algebraically opaque.
    The following lemma provides an alternative construction where control over cardinality is given up in exchange for more algebraic transparency.

    \begin{lemma}\label{lem: pure closure rel M}
        Let $Z$ be a set and $C,M$ be $S$-acts with $Z \subseteq C$ and $M \leq_p C$.
        Then 
        \[
            Z \subseteq \CM(Z \setminus M) \cup M \leq_p C.
        \]
    \end{lemma}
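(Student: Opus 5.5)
The plan is to set $D \coloneqq \CM(Z\setminus M)\cup M$ and check three things: that $D$ is a subact of $C$, that $Z\subseteq D$, and that every finite system of equations with constants from $D$ that is solvable in $C$ is already solvable in $D$. The middle claim is immediate, since each $z\in Z\setminus M$ lies in its own class $\CM(z)$ and each $z\in Z\cap M$ lies in $M$. Note that the argument never uses that $S$ is LO.

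For closure under the action, take $b\in\CM(a)$ with $a\in Z\setminus M$ and $s\in S$. If $sb\in M$ we are done. Otherwise $(b,sb)$ is a connecting path outside $M$, because $sb\in Sb$. Hence $sb\in\CM(a)$. Since $M$ is a subact, $D$ is closed under the action.

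For purity, the key observation concerns elements $c\in E\coloneqq C\setminus D$. Any such $c$ is not in $M$, and its component $\operatorname{C}^C_M(c)$ is disjoint from $D\setminus M$. This is because the components are equivalence classes and $D\setminus M$ is a union of whole classes. Consequently:
\begin{enumerate}
\item[(a)] if $sc=td$ with $c\in E$ and $d\in D$, then $e\coloneqq sc=td\in M$;
\item[(b)] if $sc=d'$ with $c\in E$ and $d'\in D$, then $d'\in M$.
\end{enumerate}
To see (a), suppose $e\notin M$. Then $d\notin M$, since $M$ is a subact, and $c,e,d$ would form a connecting path outside $M$. That would put $c$ in the class of $d\in D\setminus M$, which is impossible. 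Claim (b) is the special case $t=1$.

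Now let $\Sigma$ be a finite system with constants from $D$ and a solution in $C$. Split the variables into $\overline{x}$, whose solution values lie in $D$, and $\overline{y}$, whose values lie in $E$. Build two systems from $\Sigma$.
\begin{itemize}
\item $\Sigma_D$ keeps every equation involving only $\overline{x}$ and constants.
\item $\Sigma_E$ keeps every equation involving only $\overline{y}$. Its constants lie in $M$ by (b).
\item Each mixed equation $sy_i=tx_j$ is replaced by the pair $sy_i=e$ in $\Sigma_E$ and $tx_j=e$ in $\Sigma_D$, where $e\in M$ is the common value supplied by (a).
\end{itemize}
The original solution still satisfies both $\Sigma_D$ and $\Sigma_E$. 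Since $\Sigma_E$ has constants from $M$ and is solvable in $C$, and $M\leq_p C$, it has a solution in $M\subseteq D$. The $\overline{x}$-part of the original solution already lies in $D$ and satisfies $\Sigma_D$. Combining the two gives a tuple in $D$ satisfying every modified equation, and hence the original $\Sigma$, since each mixed equation holds when both of its sides equal $e$.

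The only delicate step is this bookkeeping: verifying via (a) and (b) that every equation touching an $E$-valued variable either involves only $M$-constants or can be rerouted through an element of $M$. Once that is in place, purity of $M$ in $C$ finishes the argument.
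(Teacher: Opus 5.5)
Your proof is correct and follows essentially the same route as the paper. Both arguments split the variables according to where the solution values lie. Both reroute each mixed equation through its common value in $M$; your claim (a) is obtained by the same connecting-path argument the paper uses. Both then solve the remaining part of the system inside $M$ using $M \leq_p C$.

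The only difference is cosmetic bookkeeping. The paper's second group of variables ranges over $C \setminus \CM(Z\setminus M)$, so $M$-valued variables are re-solved in $M$; you instead keep them with the $D$-valued variables. You also explicitly verify that $D$ is closed under the action, which the paper simply calls clear.
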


        \begin{proof}
            It is clear that $Z \subseteq \CM(Z \setminus M) \cup M \leq C$, so we show that $\CM(Z \setminus M) \cup M \leq_p C$.
            Let $\Sigma$ be a finite system of equations in variables $x_1, \dots, x_n$, $y_1, \dots, y_k$ with constants from $\CM(Z \setminus M) \cup M$ and solution $c_1, \dots, c_n \in \CM(Z\setminus M)$ and $d_1, \dots, d_k \in C \setminus \CM(Z\setminus M)$.
            Without loss of generality, each equation in $\Sigma$ is of one of the following forms for some $s,t \in S$ and $a \in \CM(Z \setminus M) \cup M$: $sx_i = tx_j$, $sx_i = ty_j$, $sy_i = ty_j$, $sx_i = a$, $sy_i = a$.
            From $\Sigma$, we build two related systems of equations $\Delta_1$ and $\Delta_2$ in variables $x_1, \dots, x_n$ and $y_1, \dots, y_k$, respectively, as follows:
            \begin{itemize}
                \item If $(sx_i = tx_j) \in \Sigma$, then let $(sx_i = tx_j) \in \Delta_1$.

                \item If $(sx_i = ty_j) \in \Sigma$, then $sc_i = td_j \in M$ as
                otherwise, $(c_i, sc_i = td_j, d_j)$ would be a connecting path witnessing that $d_j \in \CM(c_i) \subseteq \CM(Z \setminus M)$, a contradiction.
                In this case, let $(sx_i = sc_i) \in \Delta_1$ and $(ty_j = td_j) \in \Delta_2$.

                \item If $(sy_i = ty_j) \in \Sigma$, then let $(sy_i = ty_j) \in \Delta_2$.

                \item If $(sx_i = a) \in \Sigma$, then let $(sx_i = a) \in \Delta_1$.

                \item If $(sy_i = a) \in \Sigma$, then $sd_i = a \in M$ as
                otherwise, $a \in \CM(Z \setminus M)$, and so $d_i \in \CM(a) \subseteq \CM(Z \setminus M)$, a contradiction.
                In this case, let $(sy_i = a) \in \Delta_2$.
            \end{itemize}
            By construction, $c_1, \dots, c_n \in \CM(Z \setminus M)$ is a solution to $\Delta_1$.
            Moreover, $\Delta_2$ has constants from $M$ and solution $d_1, \dots, d_k \in C$.
            Because $M \leq_p C$, it follows that $\Delta_2$ has a solution $m_1, \dots, m_k \in M$.
            It can be easily verified by checking each case that the constructions of $\Delta_1$ and $\Delta_2$ are such that this implies that $c_1, \dots, c_n$, $m_1, \dots, m_k \in \CM(Z \setminus M) \cup M$ is a solution to $\Sigma$. 
        \end{proof}
    
    The following is one of the key model-theoretic results which uses in an important way the specific algebraic properties of acts.
    \begin{lemma}\label{lem: gamma in underline{kappa}}
        Let $S$ be an LO monoid. 
        If $\delta$ is a limit ordinal and $\{M_i \mid i \leq \delta\}$ is a $\leq_p$-increasing continuous chain such that $\operatorname{cf}(\delta) \geq \gamma_r(S)$, $M_\delta \leq_p C$, and $a \in C$,
        then there is some $i < \delta$ such that $\dnfbp{M_i}{a}{M_\delta}{C}$.
    \end{lemma}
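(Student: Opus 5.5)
We plan to take the witnessing models as concretely as possible. Fix $i<\delta$ and put
\[
A_i = \operatorname{C}^C_{M_i}(\{a\}\setminus M_i)\cup M_i .
\]
By Lemma~\ref{lem: pure closure rel M} (with $Z=\{a\}$ and $M=M_i$), $a\in A_i\leq_p C$. Since $M_i\leq_p C$ and $M_i\subseteq A_i$, coherence gives $M_i\leq_p A_i$. We also have $M_i\leq_p M_\delta\leq_p C$. So the square of inclusions $M_i\subseteq A_i, M_\delta\subseteq C$ consists of pure embeddings. By Definition~\ref{def:2.21} it is $\nfp$-independent exactly when $A_i\cap M_\delta=M_i$. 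This would give $\dnfbp{M_i}{a}{M_\delta}{C}$, taking $M_1=A_i$, $M_2=M_\delta$ and $M_3=C$. The whole task is therefore to find $i<\delta$ such that no element of $M_\delta\setminus M_i$ is connected to $a$ outside $M_i$. If $a\in M_\delta$, then $a\in M_j$ for some $j<\delta$, and taking $i=j$ makes the component empty. So we assume $a\notin M_\delta$.

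The first ingredient is the left ideal $I=\{s\in S\mid sa\in M_\delta\}$. By definition of $\gamma(S)$, it is generated by a set $G$ with $\card{G}<\gamma(S)\le\gamma_r(S)\le\operatorname{cf}(\delta)$. For each $g\in G$ pick $j_g<\delta$ with $ga\in M_{j_g}$. Then $i_0=\sup_{g\in G} j_g<\delta$, and $Sa\cap M_\delta\subseteq M_{i_0}$. This handles paths that go straight down from $a$ into $M_\delta$.

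The second ingredient uses LO to simplify connecting paths. Suppose a path contains a ``peak'' $a_k=sc$, $a_{k+2}=tc$ with $c=a_{k+1}$. LO gives $s\in St$ or $t\in Ss$, so $a_k\in Sa_{k+2}$ or $a_{k+2}\in Sa_k$, and $c$ can be deleted without leaving $C\setminus M_i$. Hence any connecting path outside $M_i$ reduces to a zigzag of valleys, $a_{2m+1}\in Sa_{2m}\cap Sa_{2m+2}$, possibly with a single descending or ascending step at either end.

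Now suppose toward a contradiction that for $i=i_0$ (or for all $i$ beyond some bound) some $b\in M_\delta\setminus M_i$ is connected to $a$ outside $M_i$. Take the first index $k$ on a reduced path with $a_k\in M_\delta$. If $k$ is reached in the first descending step, then $a_k\in Sa\cap M_\delta\subseteq M_{i_0}$, a contradiction. Otherwise we encode the portion of the path from $a_k$ onward as a finite system of equations with the constant $a_k$, solvable in $C$. We then use $M_\delta\leq_p C$, and the freedom to reroute inside $M_\delta$, together with the chain structure, to push the entry point back toward $a$.

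I expect this step to be the main obstacle: controlling entries into $M_\delta$ that occur after an ascending step, far from $Sa$. My first attempt would be an induction on the length of the reduced zigzag. Each valley is $a_{2m+1}=s_ma_{2m}=t_ma_{2m+2}$, and by LO the pair $(s_m,t_m)$ can be compared. I would try to show that the element where the path first enters $M_\delta$ is an $S$-multiple of an element already forced into $Sa\cap M_\delta$, or else that the last valley can be removed using purity of $M_i\leq_p C$. If purely combinatorial reduction fails, I would fall back on enlarging $I$. The fallback is to consider, for each finite pattern of the form ``$x=a$, then alternate $s_0x=t_0y_1$, $s_1y_1=t_1y_2$, $\dots$'', the left ideal of elements $u$ such that $u y_n\in M_\delta$ is realizable. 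I would bound the generators of each such ideal below $\operatorname{cf}(\delta)$, so that all of them land in some $M_{i}$ with $i<\delta$, using that there are only $\le\cardS+\aleph_0$ patterns. This last cardinality count may itself need $\operatorname{cf}(\delta)$ to be compared with $\cardS$, which is exactly the point I would scrutinize.
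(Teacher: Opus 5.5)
There is a genuine gap. Your setup matches the paper's: the ideal $I=\{s\in S\mid sa\in M_\delta\}$, the index $i_0<\delta$ with $Sa\cap M_\delta\subseteq M_{i_0}$, and the witness $\operatorname{C}^C_{M_{i_0}}(a)\cup M_{i_0}\le_p C$ from Lemma~\ref{lem: pure closure rel M}. (Note that $\operatorname{C}^C_{M_{i_0}}(a)=\operatorname{C}^C_{M_{i_0}}(Sa\setminus M_{i_0})$, which is the set the paper uses.) However, the decisive step $\operatorname{C}^C_{M_{i_0}}(a)\cap M_\delta=\varnothing$ is never proved. You label it the ``main obstacle'' and offer only tentative strategies. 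Your fallback, which bounds generators of ideals indexed by $\le\cardS+\aleph_0$ path patterns, would need $\operatorname{cf}(\delta)>\cardS$. The hypothesis $\operatorname{cf}(\delta)\ge\gamma_r(S)$ does not give this. For example, $\delta=\omega$ is allowed when $S$ is weakly noetherian, and that is exactly the case needed for Theorem~\ref{thm: WO <-> Superstable}.

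The missing idea is already in your second ingredient: you stopped the reduction too early. In a ``zigzag of valleys'' with $a_{2m-1},a_{2m+1}\in Sa_{2m}$, each interior vertex $a_{2m}$ is itself a peak. The same LO argument deletes it, and consecutive steps in the same direction merge by transitivity. So in an irreducible connecting path outside $M_{i_0}$, every interior vertex must be a strict valley. Two adjacent interior vertices cannot both be strict valleys, so such a path has at most one interior vertex. Concretely, if $b\in\operatorname{C}^C_{M_{i_0}}(a)$, then $sa=tb\notin M_{i_0}$ for some $s,t\in S$. Now if $b\in M_\delta$, then $tb\in M_\delta$, so $s\in I$ and $sa\in Sa\cap M_\delta\subseteq M_{i_0}$, a contradiction.

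This is exactly the content of \cite[Lemma 4.6]{cox2025cofibrantgenerationpuremonomorphisms}, which the paper invokes at this point. No induction on path length, no rerouting inside $M_\delta$, and no use of $M_\delta\le_p C$ is needed for this step. Purity enters only through $M_{i_0}\le_p C$ in Lemma~\ref{lem: pure closure rel M}, and in ensuring the independent square consists of pure embeddings.
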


        \begin{proof}
            If $a \in M_i$ for some $i < \delta$, then $\dnfp{M_i}{M_i}{M_\delta}{C}$ witnesses that $\dnfbp{M_i}{a}{M_\delta}{C}$. Thus, for the remainder of the proof, we will assume $a \in C \setminus M_\delta$.
            
            Consider $I = \{s \in S \mid sa \in M_\delta\}$.
            Since $M_\delta$ is an $S$-act, we know that $I$ is a left ideal of $S$.
            By definition of $\gamma_r(S)$, see Equation \eqref{eq:gamma_r}, there is some $T \subseteq I$ such that $\card{T} < \gamma_r(S)$ and $I = ST$.
            Because $\gamma_r(S)$ is regular and $\operatorname{cf}(\delta) \geq \gamma_r(S)$, it follows that there is some $i < \delta$ such that $Ta \subseteq M_i$.
            Hence
            $$Sa \cap M_\delta = Ia = STa \subseteq SM_i = M_i.$$
            Now, we let $A = \text{C}_{M_i}^C(Sa \setminus M_i) \cup M_i$.
            We know that $a \in \text{C}_{M_i}^C(Sa \setminus M_i)$ since $a \notin M_\delta$, while $A \leq_p C$ by Lemma \ref{lem: pure closure rel M}.
            Moreover, since $S$ is LO and $Sa \cap M_\delta \subseteq M_i$, we know that $$\text{C}^C_{M_i}(Sa\setminus M_i) \cap M_\delta \subseteq \text{C}^C_{M_i}(Sa \setminus M_i) \cap \text{C}^C_{M_i}(M_\delta \setminus M_i) = \varnothing$$
            by \cite[Lemma 4.6]{cox2025cofibrantgenerationpuremonomorphisms}.
            Thus, $A \cap M_\delta = \left(\text{C}^C_{M_i}(Sa \setminus M_i) \cap M_\delta\right) \cup M_i = M_i$, and so $\dnfp{M_i}{A}{M_\delta}{C}$.
        \end{proof}

         The next result provides additional stability cardinals.
    Both are instances of more general results that we have specialized to $\KActp$. We provide some details for the convenience of the reader since the references have some technical assumptions we will not introduce in this paper.

    \begin{fact}\label{fact: stable in successor}\
    
    \begin{enumerate}
    \item[$(1)$] (\cite[Theorem 4.5]{baldwin2006upward})
        If $\lambda \geq 2^{\cardS + \aleph_0}$ and $\KActp$ is stable in $\lambda$, then $\KActp$ is stable in $\lambda^+$.
        \item[$(2)$] (\cite[Lemma 5.5]{VASEY_2016}) Let $\delta$ be a limit ordinal and $\{\lambda_\alpha \mid \alpha < \delta\}$ be a strictly increasing chain of cardinals such that $\operatorname{cf}(\delta) \geq \gamma_r(S)$, $\lambda_0 \geq 2^{\cardS + \aleph_0}$, and $\KActp$ is stable in $\lambda_{\alpha +1}$ for every $\alpha < \delta$.
        Then $\KActp$ is stable in $\sup_{\alpha < \delta}\lambda_\alpha$.
    \end{enumerate}
    \end{fact}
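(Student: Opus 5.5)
Rather than checking the technical hypotheses of \cite{baldwin2006upward} and \cite{VASEY_2016}, I would prove both items directly with the independence relation $\nfp$. The plan is to count types by a pair (base model, restriction to that base), as in the proof of Lemma \ref{lem: LO -> Stable}. Both items assume stability in some cardinal, so $S$ is LO by Lemma \ref{lem: nLO -> nStable}. By Fact \ref{fact: LO <-> stable ind. rel.}, $\nfp$ is then a stable independence relation. The one tool I use throughout is uniqueness of non-forking extensions \cite[Theorem 8.5]{Lieberman_2019}: if $p,q\in\gS(N)$ both do not fork over some $M\leq_p N$ and $p\upharpoonright M=q\upharpoonright M$, then $p=q$. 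I also use monotonicity: non-forking over $M$ gives non-forking over any $M'$ with $M\leq_p M'\leq_p N$.

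For $(1)$, let $N\in(\KActp)_{\lambda^+}$. Using the L\"owenheim-Skolem axiom, I write $N=\bigcup_{\alpha<\lambda^+}N_\alpha$ as a $\leq_p$-increasing continuous chain with each $\card{N_\alpha}=\lambda$. Take $p\in\gS(N)$. By Lemma \ref{lem: local character}, $p$ does not fork over some $M\leq_p N$ with $\card{M}\leq\cardS+\aleph_0<\lambda^+$. Since $\lambda^+$ is regular, $M\subseteq N_\alpha$ for some $\alpha<\lambda^+$. Coherence gives $M\leq_p N_\alpha$, and monotonicity then shows that $p$ does not fork over $N_\alpha$. So $p$ is determined by the pair $(\alpha,p\upharpoonright N_\alpha)$. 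Stability in $\lambda$ allows at most $\lambda$ choices for the second coordinate, giving $\card{\gS(N)}\leq\lambda^+\cdot\lambda=\lambda^+$. This argument does not use the hypothesis $\lambda\geq 2^{\cardS+\aleph_0}$, so that hypothesis is only inherited from the general statement.

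For $(2)$, let $\lambda=\sup_{\alpha<\delta}\lambda_\alpha$ and $N\in(\KActp)_\lambda$. I build a $\leq_p$-increasing continuous chain $\{N_\alpha\mid\alpha\leq\delta\}$ with $N_\delta=N$. At successor stages $\card{N_{\alpha+1}}=\lambda_{\alpha+1}$. At limit stages $N_\alpha$ is the union of the earlier models, which has size at most $\lambda_\alpha$. The key step is that every $p=\gtp(a/N;C)\in\gS(N)$ does not fork over some $N_\alpha$ with $\alpha<\delta$. This is exactly Lemma \ref{lem: gamma in underline{kappa}}, applied with $\operatorname{cf}(\delta)\geq\gamma_r(S)$. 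By monotonicity, $p$ also does not fork over $N_{\alpha+1}$. Hence $p$ is determined by $(\alpha,p\upharpoonright N_{\alpha+1})$. Stability in $\lambda_{\alpha+1}$ gives at most $\lambda_{\alpha+1}$ choices for each fixed $\alpha$, so $\card{\gS(N)}\leq\sum_{\alpha<\delta}\lambda_{\alpha+1}\leq\delta\cdot\lambda=\lambda$.

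I expect the main obstacle to be the local-character step in $(2)$. A single non-forking base of size $\cardS+\aleph_0$ no longer suffices, because the number of small submodels is too large to count. What is needed is non-forking over an initial segment of a fixed resolution, and this is exactly where the cofinality hypothesis and Lemma \ref{lem: gamma in underline{kappa}} enter. A smaller point to check is that the uniqueness theorem applies to types over models of arbitrary size; if it is only stated for models, I would pass through the set-based $\dnfbp{}{}{}{}$ notation exactly as in the proof of Lemma \ref{lem: LO -> Stable}.
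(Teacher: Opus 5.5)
Your argument is correct. Part $(2)$ is essentially the paper's proof: resolve $N$, get the non-forking base from Lemma \ref{lem: gamma in underline{kappa}}, apply uniqueness from \cite[Theorem 8.5]{Lieberman_2019}, and count. You also make explicit two points the paper leaves implicit:
\begin{enumerate}
\item The stability hypotheses already force $S$ to be LO. This is needed both for Lemma \ref{lem: gamma in underline{kappa}} and for the uniqueness theorem.
\item One must pass from $N_\alpha$ to $N_{\alpha+1}$ by base monotonicity, so that stability applies to a model of size exactly $\lambda_{\alpha+1}$. For this, Lemma \ref{lem: base monotonicity} is available and does not depend on this Fact.
\end{enumerate}

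In part $(1)$ you take a genuinely different route. The paper applies Lemma \ref{lem: gamma in underline{kappa}} directly to the resolution $\{M_\alpha \mid \alpha<\lambda^+\}$, using $\operatorname{cf}(\lambda^+)\geq\gamma_r(S)$. You instead use the strong $(\cardS+\aleph_0)$-local character of Lemma \ref{lem: local character}. Regularity of $\lambda^+$ then traps the small base inside some $N_\alpha$, and base monotonicity finishes.

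Each route buys something different. Yours shows that the successor step needs only small-base local character, with none of the ideal-theoretic analysis behind $\gamma_r(S)$. The paper's version needs no monotonicity step in $(1)$, and a single argument covers both items. Your remark that $\lambda\geq 2^{\cardS+\aleph_0}$ goes unused is right. It is not really needed in the paper's argument either, since $\gamma_r(S)\leq(\cardS+\aleph_0)^+\leq\lambda^+$ whenever $\lambda\geq\cardS+\aleph_0$.
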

    
        \begin{proof}\
        \begin{enumerate}
            \item Let $M \in (\KActp)_{\lambda^+}$.
            Towards a contradiction, suppose $\card{\gS(M)} > \lambda^+$.
            Then, we may choose pairwise distinct $\{p_i \mid i < \lambda^{++}\} \subseteq \gS(M)$.
            By repeatedly applying the L\"owenheim-Skolem axiom, choose a $\leq_p$-increasing continuous chain $\{M_\alpha \mid \alpha < \lambda^+\} \subseteq (\KActp)_\lambda$ such that $M = \bigcup_{\alpha < \lambda^+} M_\alpha$.
            Because $\operatorname{cf}(\lambda^+) =\lambda^+ > 2^{\cardS+\aleph_0}\ge \cardS^+ \ge \gamma_r(S)$, for every $i < \lambda^{++}$, we may choose some $\alpha_i < \lambda^+$ such that $p_i$ does not fork over $M_{\alpha_i}$ by Lemma \ref{lem: gamma in underline{kappa}}.
            It follows from the pigeonhole principle that there is some $I \subseteq \lambda^{++}$ and $\beta < \lambda^+$ such that $\card{I} = \lambda^{++}$ and $p_i$ does not fork over $M_\beta$ for every $i \in I$.

            Define $\Phi : I \rightarrow \gS(M_\beta)$ by $\Phi(i) = p_i \upharpoonright M_\beta$.
            Since $\KActp$ is stable in $\lambda$, we know that $\card{\gS(M_\beta)} \leq \lambda$.
            Thus, we can apply the pigeonhole principle again to obtain distinct $i,j \in I$ such that $p_i \upharpoonright M_\beta = p_j \upharpoonright M_\beta$.
            Therefore $p_i = p_j$ by \cite[Theorem 8.5]{Lieberman_2019}, contradicting the original choice of $p_i$ and $p_j$.
              
            \item The same argument as that of (1) works, but given $M \in (\KActp)_{\lambda}$ for $\lambda = \sup_{\alpha < \delta}\lambda_\alpha$, choose a $\leq_p$-increasing continuous chain $\{M_\alpha \mid \alpha < \delta\} \subseteq (\KActp)_{<\lambda}$ such that $M = \bigcup_{\alpha < \delta} M_\alpha$ and $\card{ M_{\alpha +1} } = \lambda_{\alpha +1}$ for each $\alpha < \delta$. \qedhere
        \end{enumerate}
        \end{proof}

    In order to characterize superstability, we will also make use of some arithmetic properties of the stability cardinals which follow from the next result.

    \begin{lemma}\label{lem: stab in pure -> stab in embed}
        $\Stab(\KActp) \subseteq \Stab(\K_{\SAct})$.
    \end{lemma}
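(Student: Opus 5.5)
We prove the contrapositive: if $\KActp$ is not stable in $\lambda$, then neither is $\K_{\SAct}$. The idea is to transfer a large family of Galois types in $\K_{\SAct}$ over an act $M$ of size $\lambda$ to a large family of pp-types over a pure subact $B$ of size $\lambda$. We then use Theorem \ref{thm: gtp = pp} to turn these into distinct Galois types in $\KActp$.

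So let $\lambda \geq \cardS+\aleph_0$, and suppose $M$ is an $S$-act with $\card{M}=\lambda$ and $\card{\gS_{\K_{\SAct}}(M)} > \lambda$. Pick pairwise distinct types $p_i = \gtp_{\K_{\SAct}}(a_i/M;N_i)$ for $i<\lambda^+$ with $M \leq N_i$.

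\textbf{Step 1.} Show that in $\K_{\SAct}$ the Galois type of $a$ over $M$ is determined by its quantifier-free (atomic) type over $M$. This type consists of the equations $sx=tx$ and $sx=m$ with $m\in M$ satisfied by $a$. The argument is the same congruence argument as in Theorem \ref{thm: gtp = pp}, only simpler. Assume $N \cap N' = M$ and glue $N\cup N'$ along the $S$-congruence generated by $(a,b)$. Equality of atomic types makes each class meet $N$ (resp.\ $N'$) in at most one point, so both projections are embeddings agreeing on $M$ and identifying $a$ with $b$. Alternatively, this can be quoted from \cite{mazariarmida2025abstractelementaryclassacts}. Hence the atomic types $\operatorname{qf}(a_i/M;N_i)$ are pairwise distinct.

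\textbf{Step 2.} Put all realizations in one act. Assume the $N_i$ are pairwise disjoint over $M$, and let $C=\bigcup_{i<\lambda^+}N_i$. Since each $N_i$ contains $M$, this union is an $S$-act (the iterated pushout over $M$), and each $N_i \leq C$. Atomic formulas are preserved and reflected by subacts, so the atomic types of the $a_i$ over $M$ computed in $C$ are still pairwise distinct.

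\textbf{Step 3.} Apply the L\"owenheim--Skolem axiom of $\KActp$ (Lemma \ref{lem: AP, NMM, and LS(K)}) to get $B \leq_p C$ with $M \subseteq B$ and $\card{B}=\lambda$. Every atomic formula over $M$ is a pp-formula over $B$. Therefore $i\neq j$ implies $\pp(a_i/B;C)\neq \pp(a_j/B;C)$. By Theorem \ref{thm: gtp = pp}, $\gtp_{\KActp}(a_i/B;C)\neq\gtp_{\KActp}(a_j/B;C)$. This gives $\card{\gS_{\KActp}(B)}\geq\lambda^+$ with $B\in(\KActp)_\lambda$, so $\lambda\notin\Stab(\KActp)$.

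The main point needing care is Step 1, the atomic-type characterization of Galois types in $\K_{\SAct}$. If it is not already available, it requires a short congruence computation. The minimal-chain argument of Theorem \ref{thm: gtp = pp} goes through using only atomic formulas. The reason is that a chain $c=s_1p_1,\ s_1q_1=s_2p_2,\dots$ reduces exactly as before, and the only facts used are of the form $s_1a=s_2a$ and $s_1a=s_2b\in M$. These are atomic statements.
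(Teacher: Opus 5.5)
Your proof is correct. Its skeleton agrees with the paper's: put all $\lambda^+$ realizations into a single act, take a L\"owenheim--Skolem hull of $M$ in $\KActp$, and show the types stay distinct over that hull. Your union of the $N_i$ over $M$ is an explicit form of the paper's appeal to amalgamation in $\K_{\SAct}$.

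The difference is in the key step. The paper argues directly with Galois types. If $\gtp_{\KActp}(b_i/A;B)=\gtp_{\KActp}(b_j/A;B)$, the pure embeddings witnessing this agree on $A\supseteq M$ and are in particular embeddings. So they already witness $\gtp_{\K_{\SAct}}(b_i/M;B)=\gtp_{\K_{\SAct}}(b_j/M;B)$. This uses neither your Step 1 nor Theorem \ref{thm: gtp = pp}, and it is entirely abstract. As Remark \ref{rem:3.13} points out, it works for any two AECs $(K,\leq^*)$ and $(K,\leq)$ on the same class with $\leq^*$ contained in $\leq$ and $(K,\leq)$ having amalgamation.

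Your route goes through syntax instead. You need the act-specific fact that $\K_{\SAct}$-Galois types are determined by quantifier-free types. Your congruence argument for this is sound: it is the first half of the proof of Theorem \ref{thm: gtp = pp}, where only atomic formulas $sx=tx$ and $sx=m$ with $m\in M$ are used. After that you need only the easy direction of Theorem \ref{thm: gtp = pp}. What this buys is a concrete separation: any two of the types over $B$ are distinguished by a single atomic formula with parameters from $M$. The cost is an extra lemma that, even in your framework, could be replaced by the paper's one-line observation about witnesses.
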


        \begin{proof}
            Let $\lambda \in \Stab(\KActp)$.
            Recall that $\lambda \ge \LS(\KActp)$ and $\LS(\KActp) = \LS(\K_{\SAct})$. 
            Towards a contradiction, suppose $\K_{\SAct} = (\SAct,\leq)$ is not stable in $\lambda$.
            Then we may choose $M \in (\K_\SAct)_\lambda$ and pairwise distinct $\{p_i = \gtp_{\K_\SAct}(b_i/M;B_i) \mid i < \lambda^+\} \subseteq \gS_{\K_\SAct}(M)$.
            Since $\K_\SAct$ has amalgamation, we may assume that $B_i = B_0 \eqqcolon B$ for all $i < \lambda^+$.
            Let $A$ be the $S$-act obtained by applying L\"owenheim-Skolem to $M \subseteq B$ in $\KActp$.
            By the stability of $\KActp$ in $\lambda$, there are distinct $i,j < \lambda^+$ with $\gtp_{\KActp}(b_i/A;B) = \gtp_{\KActp}(b_j/A;B)$.
            Thus, there exists an $S$-act $C$ and some pure embeddings $f_i,f_j : B \rightarrow C$ such that $f_i(b_i) = f_j(b_j)$ and the following diagram commutes.
            \[
                \begin{tikzcd}
                    B \arrow[r,"f_i"] & C \\
                    A \arrow[u,sloped, phantom, "\leq_p"] \arrow[r,phantom,"\leq_p"] & B \arrow[u,swap,"f_j"]
                \end{tikzcd}
            \]
            However, since $M \subseteq A$, this diagram also witnesses that $\gtp_{\K_\SAct}(b_i/M;B) = \gtp_{\K_\SAct}(b_j/M;B)$, contradicting the choice of $i \neq j < \lambda^+$.
        \end{proof}
        
        \begin{remark}\label{rem:3.13}
        Observe that the previous result can be extended to the following general setup which will not be used in this paper:

        Assume $(K, \leq)$ and $(K, \leq^*)$ are both AECs such that $(K, \leq)$ has amalgamation.
        If $M \leq^* N$ implies $M \leq N$ for all $M, N \in K$, then $\Stab(K,\leq^*) \subseteq \Stab(K,\leq)$.
        \end{remark}

    \begin{corollary}\label{cor: stab in pure -> lambda^(<gamma) = lambda}
        If $\KActp$ is stable in $\lambda\geq 2^{\cardS+\aleph_0}$, then $\operatorname{cf}(\lambda) \geq \gamma_r(S)$.
    \end{corollary}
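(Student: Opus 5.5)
The plan is to reduce to the known stability spectrum of $\K_{\SAct}$ via Lemma \ref{lem: stab in pure -> stab in embed}, and then finish with cardinal arithmetic.

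First, suppose $\KActp$ is stable in $\lambda \geq 2^{\cardS+\aleph_0}$. By Lemma \ref{lem: stab in pure -> stab in embed}, $\K_{\SAct}$ is also stable in $\lambda$. I will then invoke the characterization of the stability spectrum of acts with embeddings from \cite{mazariarmida2025abstractelementaryclassacts}: for $\lambda \geq \cardS+\aleph_0$, $\K_{\SAct}$ is stable in $\lambda$ if and only if $\lambda^{<\gamma(S)} = \lambda$. Only the forward direction is needed here. This gives $\lambda^{<\gamma(S)} = \lambda$.

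Next comes the cardinal arithmetic. Suppose $\operatorname{cf}(\lambda) < \gamma(S)$. Then $\lambda^{\operatorname{cf}(\lambda)} \leq \lambda^{<\gamma(S)} = \lambda$, which contradicts K\"onig's theorem, since $\lambda^{\operatorname{cf}(\lambda)} > \lambda$. Hence $\operatorname{cf}(\lambda) \geq \gamma(S)$.

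To upgrade this to $\gamma_r(S)$, split into two cases according to \eqref{eq:gamma_r}.
\begin{enumerate}
\item[(a)] If $\gamma(S)$ is regular, then $\gamma_r(S) = \gamma(S)$ and we are done.
\item[(b)] If $\gamma(S)$ is singular, then $\operatorname{cf}(\lambda)$ is a regular cardinal that is at least $\gamma(S)$. It cannot equal the singular cardinal $\gamma(S)$, so $\operatorname{cf}(\lambda) > \gamma(S)$, that is, $\operatorname{cf}(\lambda) \geq \gamma(S)^+ = \gamma_r(S)$.
\end{enumerate}

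The main obstacle is pinning down the exact form of the cited result on $\Stab(\K_{\SAct})$. I need it to give at least the implication ``stable in $\lambda$ $\Rightarrow$ $\lambda^{<\gamma(S)}=\lambda$'' (or directly $\operatorname{cf}(\lambda)\geq\gamma(S)$) in the range $\lambda \geq 2^{\cardS+\aleph_0}$.

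If only a weaker statement is available, I would prove $\operatorname{cf}(\lambda) \geq \gamma(S)$ directly in $\K_{\SAct}$. Fix a left ideal $I$ that is not generated by fewer than $\kappa := \operatorname{cf}(\lambda)$ elements. Build $M$ of size $\lambda$ as a disjoint union of copies of suitable subacts of $S$ indexed along a cofinal sequence of length $\kappa$. Then realize $\lambda^{\kappa} > \lambda$ many types by adjoining a new element $a$ whose ``trace'' $\{s \in S \mid sa \in M\} = I$ is glued differently to $\kappa$-many chosen pieces. Distinct choices give distinct Galois types, which contradicts stability.
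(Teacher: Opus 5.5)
Your proof is correct and follows the paper's argument. Lemma~\ref{lem: stab in pure -> stab in embed} transfers stability to $\K_{\SAct}$, the cited characterization of its stability spectrum gives $\lambda^{<\gamma(S)}=\lambda$ in the range $\lambda\geq 2^{\cardS+\aleph_0}$, and K\"onig's theorem finishes. The paper quotes that characterization as $\lambda^{<\gamma_r(S)}=\lambda$, so K\"onig gives $\operatorname{cf}(\lambda)\geq\gamma_r(S)$ at once; your case split on whether $\gamma(S)$ is regular does the same upgrade by hand and is valid, since $\operatorname{cf}(\lambda)$ is regular and so cannot equal a singular $\gamma(S)$ (and your fallback construction is not needed).
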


        \begin{proof}
            For $\lambda \geq 2^{\cardS+\aleph_0}$, it was proven in \cite[Theorem 3.1]{mazariarmida2025abstractelementaryclassacts} that  $\K_{\SAct}$ is stable in $\lambda$ if and only if $\lambda^{<\gamma_r(S)} = \lambda$.
            Therefore, the result is an immediate consequence of Lemma \ref{lem: stab in pure -> stab in embed} and K\"onig's Theorem.
        \end{proof}
 
    \begin{theorem}\label{thm: WO <-> Superstable}
        $\KActp$ is superstable if and only if $S$ is weakly noetherian and LO.
    \end{theorem}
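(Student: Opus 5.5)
Both implications follow from results already in place; the only new work is cardinal arithmetic.

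\emph{Necessity.} Suppose $\KActp$ is superstable, so $[\mu,\infty) \subseteq \Stab(\KActp)$ for some $\mu \geq \LS(\KActp)$. In particular $\KActp$ is stable, and Theorem \ref{thm: LO <-> Stable} gives that $S$ is LO. For weak noetherianity, pick a cardinal $\lambda \geq \mu + 2^{\cardS+\aleph_0}$ with $\operatorname{cf}(\lambda) = \aleph_0$, for instance $\lambda = \beth_\omega(\mu)$. Since $\lambda$ is a stability cardinal, Corollary \ref{cor: stab in pure -> lambda^(<gamma) = lambda} yields $\aleph_0 = \operatorname{cf}(\lambda) \geq \gamma_r(S) \geq \aleph_0$. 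Hence $\gamma_r(S) = \aleph_0$, so $\gamma(S) = \aleph_0$ and $S$ is weakly noetherian.

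\emph{Sufficiency.} Assume $S$ is LO and weakly noetherian, so $\gamma_r(S) = \aleph_0$. Put $\mu_0 = 2^{\cardS+\aleph_0}$. Then $\mu_0^{\cardS+\aleph_0} = \mu_0$, so Lemma \ref{lem: LO -> Stable} gives stability in $\mu_0$. We show by induction on cardinals $\lambda \geq \mu_0$ that $\KActp$ is stable in $\lambda$.
\begin{itemize}
\item The base case $\lambda = \mu_0$ is the previous sentence.
\item Successor cardinals follow from Fact \ref{fact: stable in successor}(1).
\item For a limit cardinal $\lambda > \mu_0$, write $\lambda = \sup_{\alpha<\delta}\lambda_\alpha$ with $\delta = \operatorname{cf}(\lambda)$ and $\{\lambda_\alpha \mid \alpha<\delta\}$ strictly increasing, $\lambda_0 \geq \mu_0$.
\end{itemize}
In the limit case we may take every $\lambda_{\alpha+1}$ to be a successor cardinal in $[\mu_0,\lambda)$. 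Each $\lambda_{\alpha+1}$ is then a stability cardinal by the induction hypothesis. Since $\operatorname{cf}(\delta)$ is an infinite cardinal, $\operatorname{cf}(\delta) \geq \aleph_0 = \gamma_r(S)$, so Fact \ref{fact: stable in successor}(2) applies and gives stability in $\lambda$.

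Therefore $[\mu_0,\infty) \subseteq \Stab(\KActp)$, and $\KActp$ is superstable.

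No step is a real obstacle. The only care needed is to check the hypotheses of Fact \ref{fact: stable in successor}(2): the chain must be strictly increasing, $\lambda_0 \geq 2^{\cardS+\aleph_0}$, and the approximating cardinals must lie in the range already handled by the induction. Choosing successor cardinals above $\mu_0$ arranges all three.
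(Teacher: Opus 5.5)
Your proof is correct and follows the paper's argument essentially step for step. Necessity uses Theorem \ref{thm: LO <-> Stable} together with Corollary \ref{cor: stab in pure -> lambda^(<gamma) = lambda} at a stability cardinal of countable cofinality, and sufficiency is an induction upward from $2^{\cardS+\aleph_0}$ via Lemma \ref{lem: LO -> Stable} and both parts of Fact \ref{fact: stable in successor}; your explicit choice $\lambda \geq \mu + 2^{\cardS+\aleph_0}$ (e.g.\ $\beth_\omega(\mu)$) simply spells out the paper's tacit assumption that $\mu \geq 2^{\cardS+\aleph_0}$.
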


        \begin{proof}
            \noindent\underline{Sufficiency:} Suppose $S$ is weakly noetherian and LO.
            We prove by induction that $\KActp$ is stable in $\lambda$ for all $\lambda \geq 2^{\cardS + \aleph_0}$. The base case follows from Lemma \ref{lem: LO -> Stable} and the successor step follows from Fact \ref{fact: stable in successor}(1).           
            So suppose $\lambda > 2^{\cardS + \aleph_0}$ is a limit cardinal and $\KActp$ is stable in $\mu$ for every $\lambda > \mu \geq 2^{\cardS+\aleph_0}$. 
            Then, we may choose a strictly increasing sequence of cardinals $\{\lambda_\alpha \mid \alpha < \operatorname{cf}(\lambda)\}$ such that $\lambda_\alpha < \lambda$ for every $\alpha < \operatorname{cf}(\lambda)$, $\lambda_0 \geq 2^{\cardS + \aleph_0}$, and $\sup_{\alpha < \operatorname{cf}(\lambda)}\lambda_\alpha = \lambda$.  Since $S$ is weakly noetherian, we know $\operatorname{cf}(\lambda) \geq \aleph_0 = \gamma_r(S)$. Thus, $\KActp$ is stable in $\lambda$ by Fact \ref{fact: stable in successor}(2) and the induction hypothesis.
            
            \noindent\underline{Necessity:} Suppose $\KActp$ is superstable, so that $[\mu,\infty) \subseteq \Stab(\KActp)$ for some cardinal $\mu \geq 2^{\cardS + \aleph_0}$.
            In particular, $\KActp$ is stable, and so $S$ is LO by Theorem \ref{thm: LO <-> Stable}.
            Now, choose $\lambda \geq \mu$ such that $\operatorname{cf}(\lambda) = \aleph_0$.
            Then, $\KActp$ is stable in $\lambda$, and so $\aleph_0 = \operatorname{cf}(\lambda) \geq \gamma_r(S)$ by Corollary \ref{cor: stab in pure -> lambda^(<gamma) = lambda}.
            However, as $\gamma_r(S) \geq \aleph_0$ by definition, it follows that $S$ is also weakly noetherian.
        \end{proof}

    \begin{remark}
        Like with Theorem \ref{thm: LO <-> Stable}, there is a first-order analogue of Theorem \ref{thm: WO <-> Superstable}.
        In \cite[Theorem~4.1]{mustafin1988stability}, it was proven that every complete first-order theory of $S$-acts is superstable if and only if $S$ is weakly noetherian and LO.\footnote{
            This combination of properties is called being WO in \cite{mustafin1988stability}.
        }
        However, just as in Remark \ref{rem: compare stability to mustafin}, this does not imply Theorem \ref{thm: WO <-> Superstable} because it is not known if there is pp-quantifier elimination for acts.
    \end{remark}

%-----%-----%-----%
%-----%-----%-----%
%-----%-----%-----%
%-----%-----%-----%
\subsection{Stability Spectrum}\label{sub: main - stability spectrum}
%-----%-----%-----%
%-----%-----%-----%
%-----%-----%-----%
%-----%-----%-----%

    Under some set-theoretic hypotheses, we completely characterize the stability spectrum of $\KActp$.
    Note that $\lambda^{<\gamma_r(S)} = \lambda$ if and only if $\lambda^{<\gamma(S)} = \lambda$ for $\lambda \geq 2^{\cardS + \aleph_0}$.\footnote{
            There is nothing to show when $\gamma(S)$ is regular.
            For $\gamma(S)$ singular, $\lambda^{<\gamma(S)}=\lambda$ gives $\lambda^{\gamma(S)} =(\lambda^{<\gamma(S)})^{\operatorname{cf}(\gamma(S))}=\lambda^{\operatorname{cf}(\gamma(S))}=\lambda$ as in \cite[Theorem 5.16]{jech2003set}.
        }
    Because of this, we state the next two results with the more algebraically meaningful $\gamma(S)$, even though the proofs use~ $\gamma_r(S)$.

    For infinite cardinals $\lambda$ and $\mu$, we say that $\mu$ is \emph{almost $\lambda$-closed} if $\nu^\lambda \leq \mu$ for all $\nu < \mu$.
    Moreover, if such a cardinal exists, we let $\theta(\lambda)$ denote the least cardinal $\theta$ such that every $\mu \geq \theta$ is almost $\lambda$-closed.

    \begin{theorem}\label{thm: GCH stability}
        Assume that $\theta(\cardS+\aleph_0)$ exists.
        If $S$ is an LO monoid, then, for any $\lambda > \theta(\cardS+\aleph_0)$, $\KActp$ is stable in $\lambda$ if and only if $\lambda^{<\gamma(S)} = \lambda$.
    \end{theorem}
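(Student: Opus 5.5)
Write $\kappa = \cardS+\aleph_0$ and $\theta = \theta(\kappa)$. By the footnote preceding the theorem, for $\lambda \geq 2^{\kappa}$ the condition $\lambda^{<\gamma(S)} = \lambda$ is equivalent to $\lambda^{<\gamma_r(S)} = \lambda$, so I will work with $\gamma_r(S)$ throughout. I will also check that $\theta > 2^\kappa$, so every $\lambda > \theta$ satisfies $\lambda \geq 2^{\kappa}$. (Any $\mu \geq \theta$ is almost $\kappa$-closed, and applying this to $\mu = \theta$ or its successor with $\nu = 2$ gives $2^\kappa < \mu$.)

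\emph{Necessity.} Suppose $\KActp$ is stable in $\lambda$. Lemma~\ref{lem: stab in pure -> stab in embed} gives stability of $\K_{\SAct}$ in $\lambda$. Then \cite[Theorem 3.1]{mazariarmida2025abstractelementaryclassacts}, as quoted in the proof of Corollary~\ref{cor: stab in pure -> lambda^(<gamma) = lambda}, gives $\lambda^{<\gamma_r(S)} = \lambda$. This direction needs no hypothesis on $\theta$.

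\emph{Sufficiency.} Assume $\lambda > \theta$ and $\lambda^{<\gamma_r(S)}=\lambda$; I split by whether $\lambda$ is a successor or a limit cardinal.

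\textbf{Successor case} ($\lambda = \mu^+$). Since $\lambda > \theta$, we have $\mu \geq \theta$, so $\mu$ is almost $\kappa$-closed. I first want stability in $\mu$ or some cardinal leading up to $\lambda$. If $\mu^\kappa = \mu$, Lemma~\ref{lem: LO -> Stable} gives stability in $\mu$, and Fact~\ref{fact: stable in successor}(1) lifts it to $\lambda$. In general I will use $\lambda^\kappa = \lambda$ directly: by Hausdorff's formula, $\lambda^\kappa = \mu^+ \cdot \mu^\kappa$. If $\mu$ is a successor cardinal, or has cofinality $> \kappa$, then almost $\kappa$-closedness gives $\mu^\kappa = \mu$. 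If $\operatorname{cf}(\mu) \leq \kappa$, then $\mu^\kappa \leq (\sup_{\nu<\mu}\nu^\kappa)^{\operatorname{cf}\mu}$, which is at most $\mu^{\kappa}$; this only yields $\mu^\kappa = \mu^{\operatorname{cf}\mu} \leq 2^{\mu}$, and I expect to bound it by $\mu^+$ using almost closedness of $\mu^+$. Thus $\lambda^\kappa = \lambda$ in all cases, and Lemma~\ref{lem: LO -> Stable} applies to $\lambda$ directly. So in the successor case the simplest route is to show $\lambda^\kappa = \lambda$ for every successor $\lambda > \theta$, via $\lambda^\kappa = \mu^\kappa\cdot\lambda$ and $\mu^\kappa \leq \lambda$ from almost $\kappa$-closedness of $\lambda$.

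\textbf{Limit case.} Let $\lambda$ be a limit cardinal with $\lambda^{<\gamma_r(S)}=\lambda$; then $\operatorname{cf}(\lambda) \geq \gamma_r(S)$ by K\"onig's theorem. Choose a strictly increasing cofinal sequence of successor cardinals $\lambda_{\alpha}$, $\alpha<\operatorname{cf}(\lambda)$, with $\lambda_0 > \theta$. Each $\lambda_{\alpha+1}$ is a successor above $\theta$, so by the successor case $\lambda_{\alpha+1}^\kappa = \lambda_{\alpha+1}$, and Lemma~\ref{lem: LO -> Stable} gives stability in $\lambda_{\alpha+1}$. Fact~\ref{fact: stable in successor}(2) then yields stability in $\lambda$, using $\operatorname{cf}(\lambda)\geq\gamma_r(S)$ and $\lambda_0 \geq 2^\kappa$.

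The main obstacle is the cardinal arithmetic in the successor case: verifying $\lambda^{\kappa}=\lambda$ for successor $\lambda>\theta$ from the definition of $\theta(\kappa)$ alone, rather than from GCH. The key point is that $\lambda$ is almost $\kappa$-closed, so $\mu^\kappa \le \lambda$ for $\mu<\lambda$, and $(\mu^+)^\kappa = \mu^+\cdot\mu^\kappa$ by Hausdorff.
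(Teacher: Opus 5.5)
Your proof is correct and has the same skeleton as the paper's. Necessity goes through Lemma~\ref{lem: stab in pure -> stab in embed} and \cite[Theorem 3.1]{mazariarmida2025abstractelementaryclassacts}. Sufficiency treats successor cardinals first, then limit cardinals via K\"onig's theorem and Fact~\ref{fact: stable in successor}(2).

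The genuine difference is the successor step.
\begin{itemize}
\item The paper takes $\mu \geq \theta(\cardS+\aleph_0)$ and observes $\mu \leq \mu^{\kappa} \leq \mu^+$, since $\mu^+$ is almost $\kappa$-closed. If $\mu^\kappa=\mu$, it gets stability in $\mu$ from Lemma~\ref{lem: LO -> Stable} and pushes it up to $\mu^+$ with the upward transfer Fact~\ref{fact: stable in successor}(1). If $\mu^\kappa=\mu^+$, then $(\mu^+)^\kappa=(\mu^\kappa)^\kappa=\mu^+$ and Lemma~\ref{lem: LO -> Stable} applies to $\mu^+$ directly.
\item Your closing argument is $(\mu^+)^\kappa=\mu^+\cdot\mu^\kappa\leq\mu^+$, by Hausdorff's formula and almost $\kappa$-closedness of $\mu^+$. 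This gives $\lambda^\kappa=\lambda$ for every successor $\lambda>\theta$ uniformly. So Lemma~\ref{lem: LO -> Stable} alone handles successors, and Fact~\ref{fact: stable in successor}(1) is never used, nor the local-character and Galois-type machinery behind it.
\end{itemize}
The paper avoids Hausdorff's formula, at the cost of that model-theoretic transfer. In both proofs the real content for limit $\lambda$ is Fact~\ref{fact: stable in successor}(2), which rests on Lemma~\ref{lem: gamma in underline{kappa}}.

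A few small corrections.
\begin{itemize}
\item Applying almost $\kappa$-closedness with $\nu=2$ yields only $2^\kappa\leq\theta$, not $2^\kappa<\theta$. The strict inequality can fail: under GCH the paper notes $\theta(\kappa)=\kappa^+=2^\kappa$. Fortunately $\theta\geq 2^\kappa$ is all you ever use.
\item For the same reason, necessity is not hypothesis-free. The cited characterization of $\Stab(\K_{\SAct})$ is for $\lambda\geq 2^{\kappa}$, which you obtain from $\lambda>\theta$.
\item Delete the exploratory case analysis on $\operatorname{cf}(\mu)$, including the unresolved ``I expect to bound it by $\mu^+$''. The last sentence of your successor paragraph is already the complete argument, and it needs only $\lambda$ itself, not $\mu$, to be almost $\kappa$-closed.
\end{itemize}
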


        \begin{proof}
            \noindent\underline{Sufficiency:} Let $\lambda \geq \theta(\cardS+\aleph_0)\ge 2^{\cardS+\aleph_0}$. Then $\lambda \leq \lambda^{\cardS+\aleph_0} \leq \lambda^+$ as $\lambda^+$ is almost $(\cardS+\aleph_0)$-closed. If $\lambda^{\cardS+\aleph_0}= \lambda$, then $\KActp$ is stable in $\lambda$ by Lemma \ref{lem: LO -> Stable} and stable in $\lambda^+$ by Fact \ref{fact: stable in successor}(1). Otherwise, if $\lambda^{\cardS+\aleph_0}= \lambda^+$, then $(\lambda^+)^{\cardS+\aleph_0}= \lambda^+$, and $\KActp$ is stable in $\lambda^+$ by Lemma \ref{lem: LO -> Stable}. We conclude that 
            \begin{align*}
            \mbox{$\KActp$ is stable in all successor cardinals $\lambda > \theta(\cardS+\aleph_0)$.}\tag{3.1} \label{eq:3.1}
            \end{align*}
            Let now $\lambda > \theta(\cardS+\aleph_0)$ be such that $\lambda^{<\gamma_r(S)} = \lambda$.
            If $\lambda$ is a successor cardinal, then $\KActp$ is stable in $\lambda$ by Statement \eqref{eq:3.1}.
            So suppose $\lambda > \theta(\cardS+\aleph_0)$ is a limit cardinal.
            Then we may choose a strictly increasing sequence of successor cardinals $\{\lambda_\alpha \mid \alpha < \operatorname{cf}(\lambda)\}$ such that $\theta(\cardS+\aleph_0) < \lambda_\alpha < \lambda$ for every $\alpha < \operatorname{cf}(\lambda)$ and $\sup_{\alpha < \operatorname{cf}(\lambda)}\lambda_\alpha = \lambda$.
            Since $\lambda^{<\gamma_r(S)} = \lambda$, we know that $\operatorname{cf}(\lambda)\geq\gamma_r(S)$ by K\"onig's Theorem.
            Moreover, $\KActp$ is stable in $\lambda_\alpha$ for all $\alpha < \operatorname{cf}(\lambda)$ by Statement \eqref{eq:3.1}.
            Therefore $\KActp$ is stable in $\lambda$ by Fact \ref{fact: stable in successor}(2).
            
            \noindent\underline{Necessity:} Suppose $\KActp$ is stable in $\lambda$.
            Then $\K_{\SAct}$ is stable in $\lambda$ by Lemma \ref{lem: stab in pure -> stab in embed}, and
            \cite[Theorem~3.1]{mazariarmida2025abstractelementaryclassacts} gives $\lambda^{<\gamma(S)} = \lambda$.
        \end{proof}

        Recall that the \emph{Generalized Continuum Hypothesis (GCH)} says that $2^\lambda = \lambda^+$ for every infinite cardinal~$\lambda$.

    \begin{corollary} \label{cor: old GCH stability}
        Assume GCH.
        Let $S$ be an LO monoid and $\lambda \ge (\cardS+\aleph_0)^+$.
        Then, $\KActp$ is stable in $\lambda$ if and only if $\lambda^{<\gamma(S)} = \lambda$.
    \end{corollary}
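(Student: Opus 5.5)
Under GCH, $2^{\cardS+\aleph_0} = (\cardS+\aleph_0)^+$. The plan is to split into the range where Theorem~\ref{thm: GCH stability} applies directly and a short initial segment that we handle by hand.

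\emph{Step 1: computing $\theta(\cardS+\aleph_0)$.} Write $\kappa = \cardS+\aleph_0$. Under GCH, for any $\nu$ we have $\nu^\kappa \le \max(\nu,\kappa)^+$, and if $\kappa<\operatorname{cf}(\nu)$ then $\nu^\kappa=\nu$. Take $\mu \ge \kappa^{++}$ and $\nu<\mu$:
\begin{itemize}
\item if $\nu\le\kappa$, then $\nu^\kappa = 2^\kappa = \kappa^+ < \mu$;
\item if $\kappa<\nu<\mu$, then $\nu^\kappa \le \nu^+ \le \mu$.
\end{itemize}
So every $\mu\ge\kappa^{++}$ is almost $\kappa$-closed. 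Hence $\theta(\kappa)$ exists and $\theta(\kappa)\le \kappa^{++}$. Theorem~\ref{thm: GCH stability} then yields the equivalence for all $\lambda > \kappa^{++}$. (In fact $\kappa^+$ is also almost $\kappa$-closed, since $\nu^\kappa\le\kappa^+$ for $\nu\le\kappa$. But checking every $\mu\ge\kappa^+$ fails at $\mu=\kappa^{+\omega}$-type singulars of cofinality $\le\kappa$, which is why I take the safe bound.)

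\emph{Step 2: the cardinals $\lambda\in\{\kappa^+,\kappa^{++}\}$.} These are regular, so under GCH $\lambda^\kappa=\lambda$. By Lemma~\ref{lem: LO -> Stable}, $\KActp$ is stable in $\lambda$. On the other side, $\gamma(S)\le \cardS^+\le\kappa^+$, so $\lambda^{<\gamma(S)}\le\lambda^\kappa=\lambda$. Both sides of the equivalence therefore hold, and the equivalence is trivially true at these two cardinals.

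\emph{Step 3: necessity in general.} Alternatively, necessity holds for every $\lambda\ge 2^\kappa=\kappa^+$ directly. Stability of $\KActp$ in $\lambda$ gives stability of $\K_{\SAct}$ in $\lambda$ by Lemma~\ref{lem: stab in pure -> stab in embed}, and then \cite[Theorem 3.1]{mazariarmida2025abstractelementaryclassacts} gives $\lambda^{<\gamma(S)}=\lambda$. Only sufficiency needs the case split above.

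The main obstacle is Step~1: correctly bounding $\theta(\kappa)$ under GCH, in particular handling singular $\nu$ of small cofinality, where $\nu^\kappa=\nu^+$. Any bound $\theta(\kappa)\le\kappa^{++}$ suffices, since Step~2 covers the remaining regular cardinals.
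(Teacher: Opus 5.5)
Your proof is correct and follows essentially the paper's route: apply Theorem~\ref{thm: GCH stability} above $\theta(\kappa)$, where $\kappa=\cardS+\aleph_0$, and settle the remaining small successor cardinals directly with Lemma~\ref{lem: LO -> Stable}. The paper uses the sharp value $\theta(\kappa)=\kappa^+$, so it only hand-checks $\lambda=\kappa^+$; you hand-check $\kappa^{++}$ as well, and you also make explicit the check that $\lambda^{<\gamma(S)}=\lambda$ at these cardinals, which the paper leaves implicit. One correction to a harmless side remark: under GCH every $\mu\ge\kappa^+$ is in fact almost $\kappa$-closed. Your own two bullets already prove this, since the second never uses $\mu\ge\kappa^{++}$. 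Singular $\mu$ of cofinality $\le\kappa$ cause no trouble, because the definition only quantifies over $\nu<\mu$, and for these $\nu^\kappa\le\nu^+<\mu$; that $\mu^\kappa=\mu^+$ is irrelevant.
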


        \begin{proof}
            Assuming GCH, we have that $\theta(\lambda) = 2^\lambda = \lambda^+$ for all infinite cardinals $\lambda$.
            Thus, for any $\lambda > (\cardS+\aleph_0)^+ = 2^{\cardS+\aleph_0}$, $\KActp$ is stable in $\lambda$ if and only if $\lambda^{<\gamma(S)} = \lambda$ by Theorem \ref{thm: GCH stability}, while stability in $2^{\cardS+\aleph_0}$ follows from Lemma \ref{lem: LO -> Stable}.
        \end{proof}

    \begin{remark}
        As mentioned in \cite[Fact 4.20]{vasey2018toward}, assuming either the Singular Continuum Hypothesis or the existence of a strongly compact cardinal above $\cardS+\aleph_0$ is enough to guarantee the existence of $\theta(\cardS+\aleph_0)$ in Theorem \ref{thm: GCH stability}.
    \end{remark}

        We believe that Theorem \ref{thm: GCH stability} might actually be obtainable in ZFC, but do not know how to show it.

    \begin{question}
        In Theorem \ref{thm: GCH stability}, can we drop the requirement that $\theta(\cardS+\aleph_0)$ exists?
    \end{question}
    
%-----%-----%-----%
%-----%-----%-----%
%-----%-----%-----%
%-----%-----%-----%
\section{Pure Injectivity and Limit Models} \label{sec: limit models}
%-----%-----%-----%
%-----%-----%-----%
%-----%-----%-----%
%-----%-----%-----%

    In Subsection \ref{sub: limit models - pure injectivity}, we obtain a Baer-like criterion for pure injective acts when $S$ is LO.
    This allows us to recover a result from \cite{cox2025cofibrantgenerationpuremonomorphisms} using the model-theoretic notion of limit models.
    In Subsection \ref{sub: limit models - spectrum}, we continue to study limit models in $\KActp$.
    Under some mild set-theoretic assumptions, similar to those in Subsection~\ref{sub: main - stability spectrum}, we characterize the spectrum of limit models in a particular branch of $\KActp$. 

%-----%-----%-----%
%-----%-----%-----%
%-----%-----%-----%
%-----%-----%-----%
\subsection{Pure Injective Acts} \label{sub: limit models - pure injectivity}
%-----%-----%-----%
%-----%-----%-----%
%-----%-----%-----%
%-----%-----%-----%

    To begin, we establish a result (Lemma \ref{lem: independent chains}) about building increasing continuous chains that are, in some sense, independent with respect to $\nfp$.
    Our proof makes use of the following two properties of $\nfp$.

    \begin{lemma}[Base Monotonicity]\label{lem: base monotonicity}
        Let $S$ be an LO monoid, $C,M,N$ be $S$-acts, and $X,Y$ be sets such that $\dnfbp{M}{X}{Y}{C}$ and $M \leq_p N \leq_p C$.
        If $N \subseteq X$ or $N \subseteq Y$, then $\dnfbp{N}{X}{Y}{C}$.
    \end{lemma}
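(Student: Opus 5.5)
The plan is to unwind the definition of $\dnfbp{M}{X}{Y}{C}$ and enlarge the side that does not already contain $N$ by adjoining $N$ to it. Fact \ref{fact: LO -> pure unions} guarantees that the enlarged side is still pure, and this is the only place where LO is needed.

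By definition, $\dnfbp{M}{X}{Y}{C}$ gives $S$-acts $M_1, M_2, M_3$ with $X \subseteq M_1$, $Y \subseteq M_2$, $C \leq_p M_3$ and $\dnfp{M}{M_1}{M_2}{M_3}$. By Definition \ref{def:2.21}, this means $M \leq_p M_1, M_2 \leq_p M_3$ and $M = M_1 \cap M_2$.

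Suppose first that $N \subseteq X$, so $N \subseteq M_1$. We put $N_1 = M_1$, $N_2 = N \cup M_2$ and $N_3 = M_3$, where the union is taken inside $M_3$; this makes sense because $N \subseteq C \subseteq M_3$.
\begin{enumerate}
\item Since $N \leq_p C \leq_p M_3$, we have $N \leq_p M_3$. Also $M_2 \leq_p M_3$, so Fact \ref{fact: LO -> pure unions} (using that $S$ is LO) gives $N_2 = N \cup M_2 \leq_p M_3$.
\item We compute the intersection:
\[
N_1 \cap N_2 = (M_1 \cap N) \cup (M_1 \cap M_2) = N \cup M = N .
\]
The middle equality uses $N \subseteq M_1$, and the last uses $M \subseteq N$.
\item The purity $N \leq_p N_1$ and $N \leq_p N_2$ follows from coherence, since $N, N_1, N_2 \leq_p M_3$ and $N \subseteq N_1, N_2$.
\item We still have $X \subseteq N_1$, $Y \subseteq M_2 \subseteq N_2$ and $C \leq_p N_3$.
\end{enumerate}
Thus $(N_1, N_2, N_3)$ witnesses $\dnfp{N}{N_1}{N_2}{N_3}$, and hence $\dnfbp{N}{X}{Y}{C}$.

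The case $N \subseteq Y$ is symmetric: take $N_1 = M_1 \cup N$, $N_2 = M_2$ and $N_3 = M_3$, and run the same argument with the roles of the two sides swapped.

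The only step that is not purely formal is showing that the enlarged side $N \cup M_2$ (respectively $M_1 \cup N$) is still a pure subact of $M_3$. This is exactly what Fact \ref{fact: LO -> pure unions} provides under the LO hypothesis. Everything else is set-theoretic bookkeeping together with coherence.
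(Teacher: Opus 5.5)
Your proof is correct and follows essentially the same route as the paper: you adjoin $N$ to the side that does not already contain it, use Fact~\ref{fact: LO -> pure unions} to keep that side pure in the ambient act, and note that the intersection becomes $N$. The only cosmetic difference is that the paper uses symmetry of $\nfp$ to reduce to the case $N \subseteq Y$, while you treat $N \subseteq X$ explicitly and also spell out the coherence check that $N \leq_p N_1$ and $N \leq_p N_2$.
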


        \begin{proof}
            Since $\nfp$ is symmetric, we may assume without loss of generality that $N \subseteq Y$.
            By assumption, we may choose $S$-acts $A,B,D$ such that $X \subseteq A$, $Y \subseteq B$, $C \leq_p D$, and $\dnfp{M}{A}{B}{D}$.
            Now $(A \cup N) \cap B = N$ since $A \cap B = M$ and $N \subseteq B$.
            Moreover, $A \cup N \leq_p D$ by Fact \ref{fact: LO -> pure unions} because $A \leq_p D$ and $N \leq_p C \leq_p D$.
            Therefore $\dnfp{N}{(A\cup N)}{B}{D}$ witnesses that $\dnfbp{N}{X}{Y}{C}$.
        \end{proof}

\begin{remark}
        Every weakly stable independence relation has base monotonicity (see \cite[Lemma 3.23]{Lieberman_2019}), so Lemma \ref{lem: base monotonicity} also follows from Fact \ref{fact: LO <-> stable ind. rel.}.    
\end{remark}

    When $S$ is LO, Fact \ref{fact: LO <-> stable ind. rel.} also implies that $\nfp$ has the $(<\lambda)$-witness property for some infinite cardinal~$\lambda$ (see Definition \ref{def:2.20}).
    Our next lemma shows that we have $\lambda = \aleph_0$.

    \begin{lemma}[$(<\aleph_0)$-Witness Property]\label{lem: witness property}
        Let $A,B,C,M$ be $S$-acts such that $M \leq_p A \leq_p C$ and $M \leq_p B \leq_p C$.
        If $\dnfbp{M}{A}{X}{C}$ for every finite $X \subseteq B$, then $\dnfp{M}{A}{B}{C}$.
    \end{lemma}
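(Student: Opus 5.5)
Since $M \leq_p A \leq_p C$ and $M \leq_p B \leq_p C$, showing $\dnfp{M}{A}{B}{C}$ amounts, by Definition \ref{def:2.21}, to showing $A \cap B = M$. The inclusion $M \subseteq A \cap B$ is given, so the plan is to prove $A \cap B \subseteq M$ using only singletons $X = \{b\}$ with $b \in A \cap B$.

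Fix $b \in A \cap B$ and set $X = \{b\}$, a finite subset of $B$. By hypothesis $\dnfbp{M}{A}{\{b\}}{C}$. Unwinding the definition of $\overline{\nf}$ for sets, there are $S$-acts $A_1, B_1, D$ with $A \subseteq A_1$, $\{b\} \subseteq B_1$, $C \leq_p D$, and $\dnfp{M}{A_1}{B_1}{D}$. In particular $A_1 \cap B_1 = M$. Since $b \in A \subseteq A_1$ and $b \in B_1$, we get $b \in M$. Hence $A \cap B \subseteq M$.

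Combining this with $M \subseteq A \cap B$ gives $A \cap B = M$. Together with the assumed pure inclusions $A, B \leq_p C$ (and $M \leq_p A, B$), the square of inclusions is a pullback square of pure embeddings, that is, $\dnfp{M}{A}{B}{C}$.

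The only point needing care is the bookkeeping: the witnessing acts $A_1, B_1, D$ live over some extension $D$ of $C$, and the conclusion must be read in $C$. This causes no trouble, because intersections of subsets of $C$ are computed the same way inside $D$. No LO assumption is needed, matching the statement.
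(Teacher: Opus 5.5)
Your proposal is correct and is essentially the paper's own proof: you reduce to showing $A \cap B \subseteq M$, apply the hypothesis to the singleton $\{d\}$ for $d \in A \cap B$, and use the witnessing square $\dnfp{M}{A'}{B'}{C'}$ to get $d \in A' \cap B' = M$. You are also right that no LO assumption is needed.
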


        \begin{proof}
            Since $M \subseteq A \cap B$, it suffices to let $d \in A \cap B$ and show that $d \in M$.
            By assumption, we have that $\dnfbp{M}{A}{\{d\}}{C}$.
            Thus, there are $S$-acts $A',B',C'$ such that $A \subseteq A'$, $d \in B'$, $C \leq_p C'$, and $\dnfp{M}{A'}{B'}{C'}$.
            In particular, $d \in A \cap B' \subseteq A' \cap B' = M$.
        \end{proof}

    The following two results are similar to \cite[Lemma 4.6 and Theorem 4.8]{MazariRosickyRelativeInjective} for the setup of $R$-modules, but we provide the details for completeness and to show that the proofs go through in the non-additive setup of $S$-acts.
    
    \begin{lemma}\label{lem: independent chains}
        Let $S$ be an LO monoid and $B,C$ be $S$-acts with $B \leq_p C$ and $\card{C} = \lambda > \cardS + \aleph_0$.
        There are $\leq_p$-increasing continuous chains $\{B_i \mid  i < \lambda\}$ and $\{C_i \mid i < \lambda\}$ such that
        \begin{enumerate}
            \item[$(1)$] $B_i \leq_p C_i \leq_p C$ and $B_i \leq_p B$ for every $i < \lambda$,

            \item[$(2)$] $\card{B_i}, \card{C_i} < \lambda$ for every $i < \lambda$,

            \item[$(3)$] $\dnfp{B_i}{C_i}{B_j}{C_j}$ for every $i < j < \lambda$, and

            \item[$(4)$] $B = \bigcup_{i < \lambda}B_i$ and $C = \bigcup_{i < \lambda}C_i$.
        \end{enumerate}
    \end{lemma}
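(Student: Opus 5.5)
We build the two chains simultaneously by transfinite recursion on $i<\lambda$. First fix enumerations $B=\{b_\alpha \mid \alpha<\lambda\}$ and $C=\{c_\alpha\mid \alpha<\lambda\}$. At limit stages we take unions, $B_i=\bigcup_{j<i}B_j$ and $C_i=\bigcup_{j<i}C_j$. The Tarski--Vaught axioms then give $B_i\leq_p B$, $B_i \leq_p C_i\leq_p C$, and the chains remain $\leq_p$-increasing. Since $\lambda$ is regular when it is a successor, the cardinality bound $\card{B_i},\card{C_i}<\lambda$ survives limits of length $<\lambda$ when $\lambda$ is regular. If $\lambda$ is singular, we instead bound $\card{B_i},\card{C_i}\leq \card{i}+\cardS+\aleph_0$, which is $<\lambda$ and is preserved at limits. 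So we keep the invariant $\card{B_i},\card{C_i}\leq\card{i}+\cardS+\aleph_0$ throughout.

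For the successor step, given $B_i\leq_p C_i$, we first pick $B_i\cup\{b_i\}\subseteq B'\leq_p B$ of size $\leq\card{i}+\cardS+\aleph_0$ by L\"owenheim--Skolem. We then apply Lemma~\ref{lem: local character} to $C_i\cup B'$ (or rather a small pure subact of $C$ containing $C_i\cup B'\cup\{c_i\}$, call it $A\leq_p C$) and $B\leq_p C$. This yields $A\leq_p A'$ and $M$ with $\dnfp{M}{A'}{B}{C}$, i.e.\ $M=A'\cap B$, where $\card{M}\leq \card{A}+\cardS+\aleph_0$. To keep $A'$ small we should use the construction inside the proof of Lemma~\ref{lem: local character}, which produces $A'=\bigcup A_n$ with $\card{A'}\leq\card{A}+\cardS+\aleph_0$. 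We set $C_{i+1}=A'$ and $B_{i+1}=M=C_{i+1}\cap B$. Then $B_{i+1}\leq_p B$, $B_{i+1}\leq_p C_{i+1}\leq_p C$ (by coherence), $B_i\subseteq B'\subseteq B_{i+1}$, and $C_i\subseteq C_{i+1}$, with purity of these inclusions again by coherence. Condition (4) follows since $b_i\in B_{i+1}$ and $c_i\in C_{i+1}$.

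The key extra invariant to carry is $C_i\cap B=B_i$ for all $i$. At successors it holds by construction, and at limits it is preserved by unions. Given this invariant, condition (3) for $i<j$ reduces to showing $C_i\cap B_j=B_i$, together with $B_i\leq_p C_i, B_j\leq_p C_j$ and $C_i,B_j\leq_p C_j$. Purity comes from coherence, since all these acts are pure in $C$ and are subsets of one another appropriately. For the intersection, $C_i\cap B_j\subseteq C_i\cap B=B_i$, and conversely $B_i\subseteq C_i\cap B_j$. This makes the inclusion square a pullback square of pure embeddings, i.e.\ $\dnfp{B_i}{C_i}{B_j}{C_j}$ by Definition~\ref{def:2.21}.

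The main obstacle is the successor step: securing simultaneously that $C_{i+1}\cap B$ is a \emph{pure} subact of $B$ and that $C_{i+1}$ stays small. This is exactly what Lemma~\ref{lem: local character} provides, but we must make sure its $A'$ (not just $M$) has the right cardinality. The proof's chain construction gives this, since each $A_n$ is chosen of size $\leq\card{A}+\cardS+\aleph_0$. The limit stage is easy because the invariant $C_i\cap B=B_i$ passes to unions. Note that the LO hypothesis enters only through this use of Lemma~\ref{lem: local character}'s setting; the intersection bookkeeping itself does not need it.
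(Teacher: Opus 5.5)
Your proof is correct, and its skeleton is the paper's. The paper also carries the invariant $\dnfp{B_i}{C_i}{B}{C}$ (for inclusions this is exactly your $C_i\cap B=B_i$ together with purity) and the bound $\card{B_i},\card{C_i}\leq\cardS+\aleph_0+\card{i}$. It also deduces (3) by the same intersection computation.

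The difference is in how the invariant is maintained. The paper uses the abstract properties of $\nfp$:
\begin{itemize}
\item at limit stages, base monotonicity, symmetry and the $(<\aleph_0)$-witness property;
\item at successor stages, an auxiliary $\omega$-chain $(M_k,N_k)$, applying Lemma~\ref{lem: local character} at every step and then base monotonicity to move the base up to $M_{k+1}$.
\end{itemize}
You use the concrete description of $\nfp$ by intersections instead. At limits, $C_i\cap B=\bigcup_{j<i}(C_j\cap B)$ is immediate. At successors, one application of the construction in Lemma~\ref{lem: local character} to a small $A\supseteq C_i\cup B'\cup\{c_i\}$ suffices. This is because $A'\cap B\supseteq A\cap B\supseteq B'\supseteq B_i$ holds automatically, so no inner chain is needed. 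Alternatively, you could use only the statement of that lemma and shrink $A'$ by L\"owenheim--Skolem to a pure subact containing $A\cup M$; its intersection with $B$ is still $M$.

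The paper's route mirrors the module-theoretic argument it cites and is designed to work for an abstract independence relation with these properties. Yours is shorter. Contrary to your closing remark, it does not use LO at all: Lemma~\ref{lem: local character} is a L\"owenheim--Skolem construction with no LO hypothesis. The paper, by contrast, invokes LO through base monotonicity (Fact~\ref{fact: LO -> pure unions}). So your argument gives the lemma for every monoid. LO is genuinely needed only afterwards, in Theorem~\ref{thm: Baer-like for p.i.}, to know that the squares in (3) are pure effective.

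The only thing you left implicit is the base case, which is trivial: take, e.g., $C_0=B_0$ a small pure subact of $B$.
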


        \begin{proof}
            Let $C = \{c_i \mid i < \lambda\}$ be an enumeration.
            We build the $\leq_p$-increasing continuous chains $\{B_i \mid i < \lambda\}$ and $\{C_i \mid i < \lambda\}$ by recursion such that
            \begin{enumerate}
                \item[$(1)\ $] $B_i \leq_p C_i \leq_p C$ and $B_i \leq_p B$ for every $i < \lambda$,

                \item[$(2)'$] $\card{B_i},\card{C_i} \leq \cardS + \aleph_0 + \card{i}$ for every $i < \lambda$,

                \item[$(3)'$] $\dnfp{B_i}{C_i}{B}{C}$ for every $i < \lambda$, and

                \item[$(4)'$] $c_i \in C_{i+1}$ and, if $c_i \in B$, then $c_i \in B_{i+1}$.
            \end{enumerate}
            In this case, it is straightforward to see that Conditions (1), (2), and (4) hold.
            Moreover, notice that Condition~$(3)'$ implies that $C_i \cap B_j = (C_i \cap B) \cap B_j = B_i \cap B_j = B_i$ for all $i < j < \lambda$.
            Thus, since $B_i \leq_p C_i \leq_p C_j$ and $B_i \leq_p B_j \leq_p C_j$ by Condition (1), it follows that Condition (3) holds as well.
            Therefore, it remains to show that this construction can be carried out. We will use transfinite recursion.

            For the base case, we let $B_0$ be the $S$-act obtained by applying the L\"owenheim-Skolem axiom to $\varnothing \subseteq B$, and let $C_0 = B_0$.
            In the case of a limit ordinal $j$, we let $B_j := \bigcup_{i<j}B_i$ and $C_j := \bigcup_{i<j}C_i$ and observe that Condition $(3)'$ holds: we have $\dnfbp{B_j}{C_i}{B}{C}$ by base monotonicity, and $\dnfp{B_j}{C_j}{B}{C}$ by symmetry and $(<\aleph_0)$-witness property.
            So consider the case of a successor ordinal $j+1 < \lambda$ and assume that $B_j,C_j$ are defined and satisfy Conditions $(1)$-$(4)'$.
            We build $\leq_p$-increasing chains $\{M_k \mid k < \omega\}$ and $\{N_k \mid k <\omega\}$ such that
            \begin{enumerate}[label = (\roman*)]
                \item[$(0)^*$] $B_j \leq_p M_0$ and $C_j \leq_p N_0$,

                \item[$(1)^*$] $M_k \leq_p N_k \leq_p C$ and $M_k \leq_p B$ for every $k < \omega$,

                \item[$(2)^*$] $\card{M_k}, \card{N_k} \leq \cardS + \aleph_0 + \card{j+1}$ for every $k < \omega$,

                \item[$(3)^*$] $\dnfbp{M_{k+1}}{N_k}{B}{C}$ for every $k < \omega$, and
                
                \item[$(4)^*$] $c_j \in N_0$ and, if $c_j \in B$, then $c_j \in M_0$.
            \end{enumerate}
            In this case, we let $B_{j+1} = \bigcup_{k < \omega}M_k$ and $C_{j+1} = \bigcup_{k < \omega}N_k$.
            It is clear that Conditions $(1)$, $(2)'$, and $(4)'$ hold while Condition $(3)'$ again holds by  base monotonicity, symmetry, and the $(<\aleph_0)$-witness property.
            Thus, it remains to show that this secondary construction can be carried out.

            Let $M_0$ be the $S$-act obtained by applying L\"owenheim-Skolem to $B_j \cup (B \cap \{c_j\}) \subseteq B$.
            Similarly, let $N_0$ be the $S$-act obtained by applying L\"owenheim-Skolem to $M_0 \cup C_j \cup \{c_j\} \subseteq C$.
            Note that $M_0$ and $N_0$ are as desired by coherence.
            Now, suppose that $k \geq 0$ and $M_k,N_k$ are defined and satisfy Conditions $(0)^*$-$(4)^*$.
            By Lemma \ref{lem: local character}, we may choose $S$-acts $M,A$ such that $N_k \leq_p A$, $\dnfp{M}{A}{B}{C}$, and $\card{M} \leq \card{N_k} + \cardS+\aleph_0$.
            Let $M_{k+1}$ be the $S$-act obtained by applying L\"owenheim-Skolem to $M \cup M_k \subseteq B$.
            Now $\dnfp{M}{A}{B}{C}$ witnesses that $\dnfbp{M}{N_k}{B}{C}$, and so $\dnfbp{M_{k+1}}{N_k}{B}{C}$ by base monotonicity and coherence, i.e., Condition $(3)^*$ holds.
            Lastly, let $N_{k+1}$ be the $S$-act obtained by applying L\"owenheim-Skolem to $N_k \cup M_{k+1} \subseteq C$. It is easy to check that $M_{k+1}, N_{k+1}$ satisfy Conditions $(1)^*$ and $(2)^*$.
        \end{proof}

    We recall the definition of pure injectivity, then prove a Baer-like criterion for pure injectivity in $\SAct$.

    \begin{definition}
        An $S$-act $A$ is \emph{pure injective} if, for any $S$-acts $B \leq_p C$, every $S$-homomorphism $f : B \rightarrow A$ can be extended to an $S$-homomorphism from $C$ to $A$, i.e., there is an $S$-homomorphism $g : C \rightarrow A$ such that $g\upharpoonright B = f$.
    \end{definition}

    \begin{theorem}\label{thm: Baer-like for p.i.}
        Let $S$ be an LO monoid.
        An $S$-act $A$ is pure injective if and only if, for any $S$-acts $B \leq_p C$ such that $\card{C} \leq \cardS + \aleph_0$, every $S$-homomorphism $f : B \rightarrow A$ can be extended to an $S$-homomorphism from $C$ to $A$.
    \end{theorem}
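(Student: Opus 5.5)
The forward direction is immediate from the definition, so the plan concerns sufficiency. Assume $A$ has the extension property for all pure pairs $B \leq_p C$ with $\card{C} \leq \cardS+\aleph_0$. I will prove, by induction on $\lambda = \card{C}$, that for every $B \leq_p C$ with $\card{C} \leq \lambda$, every $S$-homomorphism $f : B \to A$ extends to $C$. The base case $\lambda \leq \cardS+\aleph_0$ is the hypothesis. So fix $\lambda > \cardS+\aleph_0$ with $\card{C} = \lambda$, and assume the claim holds for all smaller cardinalities.

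Apply Lemma \ref{lem: independent chains} to $B \leq_p C$. This gives continuous chains $\{B_i\}$ and $\{C_i\}$ with $\card{B_i}, \card{C_i} < \lambda$, $B_i \leq_p C_i$, $B_i \leq_p B$, $B = \bigcup B_i$, $C = \bigcup C_i$, and $\dnfp{B_i}{C_i}{B_j}{C_j}$ for $i<j$. I build by recursion $S$-homomorphisms $g_i : C_i \to A$ that are increasing in $i$ and satisfy $g_i \upharpoonright B_i = f \upharpoonright B_i$.

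\begin{itemize}
\item[] \emph{Base step.} Since $B_0 \leq_p C_0$ and $\card{C_0} < \lambda$, the induction hypothesis extends $f\upharpoonright B_0$ to $g_0$.
\item[] \emph{Limit step.} Take unions; continuity of the chains makes this well defined.
\item[] \emph{Successor step.} Define $h$ on $C_i \cup B_{i+1}$ by $h = g_i \cup f\upharpoonright B_{i+1}$. Then extend $h$ to $C_{i+1}$ using the induction hypothesis, since $\card{C_{i+1}} < \lambda$.
\end{itemize}

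Finally, $g = \bigcup_i g_i$ extends $f$, because $B = \bigcup B_i$.

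The successor step needs two facts. First, $h$ is well defined, because $C_i \cap B_{i+1} = B_i$ by independence, and there $g_i$ and $f$ agree. Second, $h$ is an $S$-homomorphism, since $C_i \cup B_{i+1}$ is the pushout of $C_i \geq B_i \leq B_{i+1}$ (the intersection is nonempty, as $B_i$ is an act), so maps that agree on $B_i$ glue to a homomorphism. The main obstacle is verifying $C_i \cup B_{i+1} \leq_p C_{i+1}$, which the extension step requires. Since $S$ is LO and $C_i, B_{i+1} \leq_p C_{i+1}$, Fact \ref{fact: LO -> pure unions} gives exactly this. Alternatively, Fact \ref{fact: LO <-> stable ind. rel.}(3) says the independent square is pure effective, so the pushout-induced map $C_i \cup B_{i+1} \to C_{i+1}$ is a pure embedding.

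With this in place the induction closes, and the criterion follows. Note that the LO hypothesis is used both in Lemma \ref{lem: independent chains} and in this purity-of-unions step.
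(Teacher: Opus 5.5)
Your proof is correct and is essentially the paper's argument: induction on $\card{C}$, the independent chains of Lemma~\ref{lem: independent chains}, and a successor step that glues $g_i$ with $f\upharpoonright B_{i+1}$ and then extends via the induction hypothesis. The only difference is cosmetic. You realize the pushout concretely as $C_i \cup B_{i+1}$ and get its purity in $C_{i+1}$ from Fact~\ref{fact: LO -> pure unions}, whereas the paper works with the abstract pushout $P$ and gets purity of $P \rightarrow C_{i+1}$ from the pure-effectiveness clause, Fact~\ref{fact: LO <-> stable ind. rel.}(3), which is the alternative you also mention.
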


        \begin{proof}
            The forward direction is clear, so we prove the reverse direction.
            We show by induction on $\lambda \geq \cardS + \aleph_0$ that for every $B \leq_p C$ with $\card{C} \leq \lambda$ and every $S$-homomorphism $f : B \rightarrow A$, there is an $S$-homomorphism $g : C \rightarrow A$ extending $f$.

            The base case $\lambda = \cardS + \aleph_0$ is given by our assumption, so we do the induction step.
            Let $B \leq_p C$ with $\card{C} \leq \lambda$, and let $f : B \rightarrow A$ be an $S$-homomorphism.
            If $\card{C} < \lambda$, then we can extend $f$ by the induction hypothesis.
            So, without loss of generality, we assume that $\card{C} = \lambda > \cardS + \aleph_0$.
            Since $B \leq_p C$, we may choose $\{B_i \mid i < \lambda\}$ and $\{C_i \mid i < \lambda\}$ to be as in Lemma \ref{lem: independent chains}.
            For every $i < \lambda$, let $f_i = f\upharpoonright {B_i} : B_i \rightarrow A$.
            We build $\{g_i \mid  i < \lambda\}$ by transfinite recursion such that
            \begin{enumerate}
                \item $g_i : C_i \rightarrow A$ is an $S$-homomorphism for every $i < \lambda$,

                \item $f_i \subseteq g_i$ for every $i < \lambda$, and

                \item $g_i \subseteq g_j$ for every $i < j < \lambda$. 
            \end{enumerate}
            In this case, $g := \bigcup_{i < \lambda}g_i : C \rightarrow A$ is an $S$-homomorphism by Conditions (1) and (3) that extends $f$ by Condition (2).
            It remains to show that this construction can be carried out.
            
            For the base case, we may apply the induction hypothesis to $f_0 : B_0 \rightarrow A$ to obtain an $S$-homomorphism $g_0 : C_0 \rightarrow A$ such that $g_0\upharpoonright {B_0} = f_0$.
            In the limit step, we take unions as usual, $g_j := \bigcup_{i < j}g_i$, so consider the case of a successor ordinal $j + 1 < \lambda$.
            Let 
            $C_j \xrightarrow{h} P \xleftarrow{h'} B_{j+1}$ be the pushout of $C_j \xleftarrow{\id} B_j \xrightarrow{\id} B_{j+1}$,
            and let $k : P \rightarrow C_{j+1}$ be the pushout-induced map of $C_j \xrightarrow{\id} C_{j+1} \xleftarrow{\id} B_{j+1}$.
            \begin{center}
                \begin{tikzcd}
                     & & C_{j+1} \\
                    C_j \arrow[r,"h"] \arrow[urr, bend left = 20, "\text{id}"] & P \arrow[ur, dashed, "k"] & \\
                    B_j \arrow[u, "\text{id}"] \arrow[r, "\text{id}"] & B_{j+1} \arrow[u, "h'"] \arrow[uur, bend right = 20, swap, "\text{id}"] &
                \end{tikzcd}
            \end{center}
            Since $\dnfp{B_j}{C_j}{B_{j+1}}{C_{j+1}}$, we know that $k$ is a pure embedding by Fact \ref{fact: LO <-> stable ind. rel.}.
            Similarly, since $g_j\upharpoonright {B_j} = f_j = f_{j+1}\upharpoonright {B_j}$, we also obtain the pushout-induced map $k' : P \rightarrow A$ of $C_j \xrightarrow{g_j} A \xleftarrow{f_{j+1}} B_{j+1}$.
            \begin{center}
                \begin{tikzcd}
                     & & A \\
                    C_j \arrow[r,"h"] \arrow[urr, bend left = 20, "g_j"] & P \arrow[ur, dashed, "k'"] & \\
                    B_j \arrow[u, "\text{id}"] \arrow[r, "\text{id}"] & B_{j+1} \arrow[u, "h'"] \arrow[uur, bend right = 20, swap, "f_{j+1}"] &
                \end{tikzcd}
            \end{center}
            Because $\card{C_{j+1}} < \lambda$ and $k : P \rightarrow C_{j+1}$ is a pure embedding, it follows from the induction hypothesis on $A$ that there exists an $S$-homomorphism $g_{j+1} : C_{j+1} \rightarrow A$ such that $g_{j+1} \circ k = k'$.
            \begin{center}
                \begin{tikzcd}
                    P \arrow[r,"k"] \arrow[d, swap, "k'"] & C_{j+1} \arrow[dl, dashed, "g_{j+1}"] \\
                    A & 
                \end{tikzcd}
            \end{center}
            Observe that, if $b \in B_{j+1}$, then
            $$g_{j+1}(b) = g_{j+1}(k(h'(b))) = k'(h'(b)) = f_{j+1}(b).$$
            Thus, $f_{j+1} \subseteq g_{j+1}$.
            Similarly, if $c \in C_j$, then
            $$g_{j+1}(c) = g_{j+1}(k(h(c))) = k'(h(c)) = g_j(c).$$
            Thus, $g_j \subseteq g_{j+1}$ as well.
        \end{proof}

    Limit models are a major object of study in model theory that were introduced in \cite{Shelah1996}.
    We recall the relevant definitions here.

    \begin{definition}
        Let $\K$ be an AEC, $\lambda \geq \LS(\K)$ be a cardinal, and $\delta < \lambda^+$ be a limit ordinal.
        We say that $N \in \K$ is a \emph{$(\lambda,\delta)$-limit model over $M\in \K_\lambda$} if there is a $\leq_\K$-increasing continuous chain $\{M_i \mid i < \delta\} \subseteq \K_\lambda$ such that:
        \begin{itemize}
            \item $M = M_0$, $N = \bigcup_{i < \delta}M_i$, and

            \item $M_{i+1}$ is universal over $M_i$ for every $i < \delta$.
        \end{itemize}
        We say that $N$ is a \emph{$(\lambda,\delta)$-limit model} if $N$ is a $(\lambda,\delta)$-limit model over $M$ for some $M \in \K_\lambda$.
        Similarly, we say that $N$ is a \emph{$\lambda$-limit model} if $N$ is a $(\lambda,\delta)$-limit model for some limit ordinal $\delta < \lambda^+$, and that $N$ is a \emph{limit model} if $N$ is a $\lambda$-limit model for some cardinal $\lambda \geq \LS(\K)$.
    \end{definition}

    Theorem \ref{thm: Baer-like for p.i.} can be used to show that long limit models are pure injective.

    \begin{corollary}\label{cor: long limits are p.i.}
        Let $S$ be an LO monoid.
        Then, in $\KActp$,
        every $(\lambda,\delta)$-limit model with $\operatorname{cf}(\delta) \geq (\cardS + \aleph_0)^+$ is pure injective.
    \end{corollary}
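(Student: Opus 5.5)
By Theorem \ref{thm: Baer-like for p.i.}, it suffices to check the small Baer-like condition for a $(\lambda,\delta)$-limit model $N$ with $\operatorname{cf}(\delta) \geq (\cardS+\aleph_0)^+$. Fix the witnessing chain $\{M_i \mid i < \delta\} \subseteq (\KActp)_\lambda$, so $M_{i+1}$ is universal over $M_i$ and $N = \bigcup_{i<\delta} M_i$. Let $B \leq_p C$ with $\card{C} \leq \cardS + \aleph_0$, and let $f : B \rightarrow N$ be an $S$-homomorphism. We must extend $f$ to all of $C$.

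The first step is to push $f$ into a single stage of the chain. Since $\card{f(B)} \leq \cardS + \aleph_0 < \operatorname{cf}(\delta)$, there is some $i < \delta$ with $f(B) \subseteq M_i$. So $f$ is an $S$-homomorphism $B \rightarrow M_i$.

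Next, form the pushout $C \xrightarrow{g_1} P \xleftarrow{g_2} M_i$ of $C \hookleftarrow B \xrightarrow{f} M_i$. Since $B \leq_p C$, Fact \ref{fact: preserving purity}(3) makes $g_2 : M_i \rightarrow P$ a pure embedding. Note that Fact \ref{fact: preserving purity}(3) only assumes that the opposite leg is a (pure) embedding; $f$ itself need not be injective, and the proof there only used properties of $f_2$. Here $f_1 = f$ is arbitrary and $f_2$ is the inclusion $B \leq_p C$, so the roles should be checked. Identify $M_i$ with $g_2(M_i)$. Then $\card{P} \leq \lambda + \cardS + \aleph_0 = \lambda$. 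By Löwenheim--Skolem, enlarging if necessary (no maximal models allows padding to size exactly $\lambda$), we may assume $M_i \leq_p P$ with $P \in (\KActp)_\lambda$.

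Universality of $M_{i+1}$ over $M_i$ now gives a pure embedding $h : P \rightarrow M_{i+1}$ with $h\upharpoonright M_i = \id$. Then $g := h \circ g_1 : C \rightarrow M_{i+1} \subseteq N$ is an $S$-homomorphism. For $b \in B$ we have $g(b) = h(g_1(b)) = h(g_2(f(b))) = f(b)$, so $g$ extends $f$. Theorem \ref{thm: Baer-like for p.i.} then yields that $N$ is pure injective.

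The main obstacle is the purity of $g_2$ when $f$ is not injective. Fact \ref{fact: preserving purity}(3) is stated for arbitrary $S$-homomorphisms $f_1, f_2$, and its proof of purity of $g_1$ used only that $f_2$ is a pure embedding; I would verify that the case analysis there does not secretly use injectivity of $f_1$. If it does, I would instead factor $f$ as $B \rightarrow f(B)$ and handle the quotient by hand via the same equation-splitting technique.
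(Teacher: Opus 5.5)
Your proposal is correct and is essentially the paper's proof: reduce via Theorem \ref{thm: Baer-like for p.i.} to $\card{C} \leq \cardS+\aleph_0$, use $\operatorname{cf}(\delta)$ to get $f(B) \subseteq M_i$, take the pushout of $M_i \xleftarrow{f} B \leq_p C$, and map it into $M_{i+1}$ by universality. Two of your worries are unnecessary: Fact \ref{fact: preserving purity}(3) is stated for arbitrary $S$-homomorphisms, and case (ii) of its proof explicitly allows $f_1(a)=f_1(a')$, so non-injective $f$ is already covered; and no padding is needed, since $M_i$ embeds in $P$ and therefore $\card{P}=\lambda$ exactly.
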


        \begin{proof}
            Let $\{A_i \mid i < \delta\}$ be a witness of $A$ being a $(\lambda,\delta)$-limit model with $\operatorname{cf}(\delta) \geq (\cardS+\aleph_0)^+$.
            Let $B,C$ be $S$-acts such that $B \leq_p C$, and let $f : B \rightarrow A$ be an $S$-homomorphism.
            By Theorem \ref{thm: Baer-like for p.i.}, we may assume that $\card{C} \leq \cardS + \aleph_0$.

            Since $\card{f(B)} \leq \card{B} \leq \card{C} \leq \cardS + \aleph_0 < \operatorname{cf}(\delta)$ and $f(B) \subseteq A = \bigcup_{j < \delta}A_j$, there is some $i < \delta$ such that $f(B) \subseteq A_i$.
            Let $A_i \xrightarrow{\id} P \xleftarrow{g_1} C$ be the pushout of $A_i \xleftarrow{f} B \xrightarrow{\id} C$.
            We know that $A_i \leq_p P$ by Fact \ref{fact: preserving purity}(3).
            Moreover, observe that $\card{P} = \lambda$ because $\lambda = \card{A_i} \leq \card{P} \leq \card{A_i} + \card{C} \leq \lambda + (\cardS + \aleph_0) = \lambda$.
            Thus, since $A_{i+1}$ is universal over $A_i$, there exists a pure embedding $g_2 : P \rightarrow A_{i+1}$ such that $g_2\upharpoonright {A_i} = \text{id}_{A_i}$.
            \begin{center}
                \begin{tikzcd}
                    C \arrow[r, "g_1"] & P \arrow[dr, dashed, "g_2"] & \\
                    B \arrow[u, "\id"] \arrow[r, "f"] & A_i \arrow[u, phantom, sloped, "\leq_p"] \arrow[r, phantom, "\leq_p^u"] & A_{i+1}
                \end{tikzcd}
            \end{center}
            Observe that $g \coloneqq g_2 \circ g_1 : C \rightarrow A_{i+1} \subseteq A$ is an $S$-homomorphism such that $g \upharpoonright B = f$.
            Therefore $A$ is pure injective.
        \end{proof}
    
    Recall that a category has \emph{enough pure injectives} if every object has a pure monomorphism into a pure injective object.
    Although it was already shown as a special case of \cite[Corollary 1.3]{cox2025cofibrantgenerationpuremonomorphisms}\label{cor: enough p.i.} that $\SAct$ has enough pure injectives when $S$ is an LO monoid, Corollary \ref{cor: long limits are p.i.}
    provides a more direct proof of this.

    \begin{fact} \label{fact: enough p.i.}
        If $S$ is an LO monoid, then $\SAct$ has enough pure injectives.
    \end{fact}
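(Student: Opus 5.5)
Given an $S$-act $B$, the plan is to build a limit model above $B$ with a long chain and invoke Corollary \ref{cor: long limits are p.i.}. Set $\mu = \cardS + \aleph_0$ and pick a cardinal $\lambda \geq \card{B} + 2^{\mu}$ with $\lambda^{\mu} = \lambda$; for instance $\lambda = (\card{B} + 2^{\mu})^{\mu}$ works, since $(\nu^{\mu})^{\mu} = \nu^{\mu}$. By Lemma \ref{lem: LO -> Stable}, $\KActp$ is stable in $\lambda$.

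First enlarge $B$ to an act of size exactly $\lambda$. By no maximal models (Lemma \ref{lem: AP, NMM, and LS(K)}), for example by iterating coproducts $B \coprod B$ as in Fact \ref{fact: preserving purity}(1) and taking unions at limits via Tarski--Vaught, we get $B \leq_p M_0$ with $\card{M_0} = \lambda$. Next we build a $\leq_p$-increasing continuous chain $\{M_i \mid i < \delta\} \subseteq (\KActp)_\lambda$ with $\delta = \mu^+$. Since $\mu^+ \leq 2^{\mu} \leq \lambda < \lambda^+$, this $\delta$ is an admissible limit ordinal, and $\operatorname{cf}(\delta) = \mu^+$. At successor steps we use Fact \ref{fact: stable -> universal exts}, with amalgamation from Lemma \ref{lem: AP, NMM, and LS(K)} and stability in $\lambda$, to choose $M_{i+1}$ universal over $M_i$. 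At limit steps we take unions; these have size $\lambda$ because $\delta \leq \lambda$, and they are pure extensions of the earlier $M_i$ by Tarski--Vaught.

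Then $N = \bigcup_{i<\delta} M_i$ is a $(\lambda,\mu^+)$-limit model with $\operatorname{cf}(\delta) \geq (\cardS+\aleph_0)^+$. By Corollary \ref{cor: long limits are p.i.}, $N$ is pure injective. Moreover $B \leq_p M_0 \leq_p N$, so the inclusion $B \to N$ is a pure monomorphism, which gives the claim.

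The only delicate point is the bookkeeping: we need a stability cardinal $\lambda$ above $\card{B}$ with $\lambda^{\mu}=\lambda$, and a chain length $\delta < \lambda^+$ of cofinality at least $\mu^+$. Both are handled by the choices above. One could also note that the universal extensions stay in the branch of $B$, but this is not needed, since $\KActp$ itself has amalgamation.
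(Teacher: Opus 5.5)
Your proposal is correct and is essentially the paper's proof. Both arguments choose $\lambda$ with $\lambda^{\cardS+\aleph_0}=\lambda$ so that $\KActp$ is stable in $\lambda$ by Lemma~\ref{lem: LO -> Stable}, purely extend to an act of size $\lambda$ using no maximal models, build a $(\lambda,(\cardS+\aleph_0)^+)$-limit model via Fact~\ref{fact: stable -> universal exts}, and conclude with Corollary~\ref{cor: long limits are p.i.}. The only cosmetic difference is that the paper takes $\lambda = 2^{\card{A}+\cardS+\aleph_0}$ where you take $(\card{B}+2^{\mu})^{\mu}$; both satisfy the needed cardinal arithmetic.
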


        \begin{proof}
            Let $A$ be an $S$-act and $\lambda  = 2^{\card{A} + \cardS + \aleph_0}$.
            Note that $\KActp$ is stable in $\lambda$ by Lemma \ref{lem: LO -> Stable}.
            Since $\KActp$ has no maximal models, we may choose  $B \in (\KActp)_\lambda$ with $A \leq_p B$.
            Moreover, by repeatedly applying Fact \ref{fact: stable -> universal exts},  there is some $(\lambda,(\cardS+\aleph_0)^+)$-limit model $C$ over $B$ in $\KActp$.
            Then $A \leq_p C$ and, by Corollary~\ref{cor: long limits are p.i.}, $C$ is pure injective.
        \end{proof}

%-----%-----%-----%
%-----%-----%-----%
%-----%-----%-----%
%-----%-----%-----%
\subsection{Spectrum of Limit Models}\label{sub: limit models - spectrum}
%-----%-----%-----%
%-----%-----%-----%
%-----%-----%-----%
%-----%-----%-----% 
Let $\K_0$ denote the branch of $S$-acts containing the zero act $\Theta$. In this subsection, we study the spectrum of limit models of the branch $\K_0$. The reason we focus on a single branch instead of on $\KActp$ is because limit models in different branches cannot be isomorphic, while we focus on the branch $\K_0$ because of the following result:

    \begin{lemma}\label{lem: stability spectrum of zero branch}
        $\Stab(\K_0) = \Stab(\KActp)$.
    \end{lemma}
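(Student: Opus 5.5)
The inclusion $\Stab(\KActp) \subseteq \Stab(\K_0)$ is immediate from Remark \ref{rem: branch stability and tameness}, which says that $\KActp$ is stable in $\lambda$ if and only if every branch is. So the work is the reverse inclusion: assume $\K_0$ is stable in $\lambda \geq \cardS+\aleph_0$, and show every branch is stable in $\lambda$. Equivalently, I will show that for every $B \in (\KActp)_\lambda$ we have $\card{\gS_{\KActp}(B)} \leq \lambda$.

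The plan is to transfer types over $B$ to types over $B \coprod \Theta$, which lies in $\K_0$.

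Step 1: the coproduct preserves purity in the needed way. For any $S$-acts $B \leq_p C$ with $C \cap \Theta = \varnothing$, we have $B \coprod \Theta \leq_p C \coprod \Theta$, by Fact \ref{fact: preserving purity}(2) with $g_2 = \id_\Theta$. We also have $\Theta \leq_p C \coprod \Theta$. Hence $C \coprod \Theta \in \K_0$, and $B \coprod \Theta$ has cardinality $\lambda$.

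Step 2: the map $p = \gtp(c/B;C) \mapsto \gtp(c/B\coprod\Theta; C\coprod\Theta)$ into $\gS_{\K_0}(B\coprod\Theta)$ is injective. By amalgamation we may realize any two types over $B$ in one common $C$. By Theorem \ref{thm: gtp = pp}, Galois types in $\KActp$ are determined by pp-types. By Remark \ref{rem: branch stability and tameness}, the same holds in $\K_0$. So suppose $c,d \in C$ have the same pp-type over $B\coprod\Theta$ in $C\coprod\Theta$. Then they have the same pp-type over $B$ in $C \coprod \Theta$. Since $C \leq_p C\coprod\Theta$ (Fact \ref{fact: preserving purity}(1)/(2)), they have the same pp-type over $B$ in $C$, and therefore the same Galois type over $B$.

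One point needs care: whether the map is well defined, that is, whether equal types over $B$ can give different types over $B \coprod \Theta$. We avoid this by fixing one representative $(c,C)$ for each type, which gives an injection from $\gS(B)$ into $\gS_{\K_0}(B\coprod\Theta)$. Alternatively, well-definedness can be checked directly by coproducting the amalgam with $\Theta$.

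Step 3: conclude. The injection gives $\card{\gS(B)} \leq \card{\gS_{\K_0}(B\coprod\Theta)} \leq \lambda$ by stability of $\K_0$ in $\lambda$.

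The main obstacle I expect is in Step 2: the pp-type comparison passes through $C \coprod \Theta$, and this needs $C$ to be pure in it. That holds by Fact \ref{fact: preserving purity}(1) applied to $\Theta \leq_p C$, or more directly by part (2) with $\varnothing$ handled separately. If that gives trouble, I would verify purity of $C$ in $C\coprod\Theta$ by splitting a system of equations into the variables assigned to $C$ and those assigned to $\theta$. This works only because no equation can link $\theta$ to $C$, so the system decouples in that way. Note, however, that $C \coprod \Theta$ is not claimed to contain $C$ purely in general: the $\theta$-part of a system may have no solution in $C$ when $S$ lacks local zeros.

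This is a real gap, and to sidestep it I would not use purity of $C$ at all. Instead I compare pp-types of $c,d$ over $B$ directly: any system with constants from $B$ and a solution in $C\coprod\Theta$ using $c$ splits into a $C$-part and a constant-free $\theta$-part. The $C$-part is solved in $C$, and the $\theta$-part is unaffected by whether we use $c$ or $d$. Hence equal pp-types over $B\coprod\Theta$ in $C \coprod \Theta$ imply equal pp-types over $B$ in $C$.
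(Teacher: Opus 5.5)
Your final argument is correct and is essentially the paper's proof. The paper likewise passes from $M$ to $M \cup \Theta \in (\K_0)_\lambda$ via Fact~\ref{fact: preserving purity}(2) and uses Theorem~\ref{thm: gtp = pp} to get equal pp-types over $M \cup \Theta$. It then splits a system into a part living in $A$ and a constant-free $\theta$-part, and solves the $\theta$-part with the original witness in $A$; the only difference is that the paper phrases this as a contradiction with $\lambda^+$ types realized in one act rather than as an injection $\gS(B) \to \gS_{\K_0}(B \coprod \Theta)$.

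You should delete the claim in Step~2 that $C \leq_p C \coprod \Theta$: it fails without local zeros, and Fact~\ref{fact: preserving purity} does not give it. The decoupling argument you end with is exactly what replaces it.
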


        \begin{proof}
            It suffices to show that $\Stab(\K_0) \subseteq \Stab(\KActp)$ since $\Stab(\KActp) = \bigcap_{\text{branches}~\K'} \Stab(\K')$.
            Towards a contradiction, suppose there is some cardinal $\lambda \geq \cardS + \aleph_0$ such that $\K_0$ is stable in $\lambda$, but $\KActp$ is not.
            In this case, there is some $S$-act $M$ with $\card{M} = \lambda$ and $\card{\gS_{\KActp}(M)} > \lambda$.
            Choose pairwise distinct $\{p_i \coloneqq \gtp_{\KActp}(a_i/M;A_i) \mid i < \lambda^+\} \subseteq \gS_{\KActp}(M)$.
            Since $\KActp$ has amalgamation, we may assume that $A_i = A_0 \eqqcolon A$ for every $i < \lambda^+$.
            Without loss of generality, we may assume that $\Theta$ is disjoint from $M$ and $A$.
            Observe that $M \cup \Theta \in (\K_0)_\lambda$\footnote{
                This is the only place in the proof where the argument fails for a branch $\K' \neq \K_0$ because, in general, $B \in \K'$ does not imply that $M \coprod B \in \K'$.
                However, this is true when $B = \Theta$ and $\K' = \K_0$.
            }
            and $M \cup \Theta \leq_p A \cup \Theta$ by Fact~\ref{fact: preserving purity}(2).
            Thus, since $\K_0$ is stable in $\lambda$,
            \[
                \left|\left\lbrace\gtp_{\K_0}\left(a_i/\left(M \cup \Theta\right);A \cup \Theta\right) \mid i < \lambda^+\right\rbrace\right| \leq \lambda.
            \]
            In particular, there are distinct $i,j < \lambda^+$ such that
            \[\gtp_{\K_0}\left(a_i/\left(M \cup \Theta\right);A \cup \Theta \right) = \gtp_{\K_0}\left(a_j/\left(M \cup \Theta\right);A\cup\Theta\right).\]
            It follows from Theorem \ref{thm: gtp = pp} (see Remark \ref{rem: branch stability and tameness}) that $\pp\left(a_i/\left(M \cup \Theta\right);A\cup \Theta\right) = \pp\left(a_j/\left(M \cup \Theta\right);A \cup\Theta\right)$.
            We claim that $\pp(a_i/M;A) = \pp(a_j/M;A)$ which, together with Theorem \ref{thm: gtp = pp}, contradicts the choice of $p_i,p_j \in \gS_{\KActp}(M)$.

            Let $\Sigma$ be a finite system of equations in variables $x,y_1, \dots, y_n$ with constants from $M$ and solution $a_i,b_1, \dots, b_n \in A$.
            Since $\pp\left(a_i/\left(M \cup \Theta\right);A\cup\Theta\right) = \pp\left(a_j/\left(M \cup \Theta\right);A\cup\Theta\right)$, $\Sigma$ also has a solution $a_j,c_1, \dots, c_n \in A \cup \Theta$, where we relabel the variables as necessary so that $c_1, \dots, c_{k_0} \in A$ and $c_{k_0 + 1} = \ldots = c_n = \theta$ for some $0 \leq k_0 \leq n$.
            Note that $\Sigma$ does not contain an equation of the form $sy_k = ty_\ell$ for some $s,t \in S$ and $k \leq k_0 < \ell$ because otherwise $\theta = t\theta = tc_\ell = sc_k \in A$, a contradiction.
            Similarly, $\Sigma$ does not contain an equation of the form $sx = ty_\ell$ for $s,t \in S$ and $\ell > k_0$.
            Thus, we may partition $\Sigma$ into a system of equations $\Sigma_A$ in variables $x,y_1, \dots, y_{k_0}$, and a system of equations $\Sigma_\Theta$ in variables $y_{k_0 + 1}, \dots, y_n$.
            Since $a_i,b_1, \dots, b_n$ is a solution to $\Sigma$, we know that, in particular, $b_{k_0 + 1}, \dots, b_n \in A$ is a solution to $\Sigma_\Theta$.
            Similarly, $a_j, c_1, \dots, c_{k_0} \in A$ is a solution to $\Sigma_A$.
            Hence, $a_j,c_1, \dots, c_{k_0},b_{k_0 + 1}, \dots, b_n \in A$ is a solution to $\Sigma$, and so $\pp(a_i/M;A) \subseteq \pp(a_j/M;A)$.
            Showing that $\pp(a_i/M;A) \supseteq \pp(a_j/M;A)$ is analogous and omitted.
        \end{proof}

    The remainder of our argument makes use of the following notions from \cite[Definition 5.9]{beard2025spectrum}, see also \cite[Definitions 2.4 and 4.6]{vasey2018toward}.

    \begin{definition}
        Let $\nf$ be a weakly stable independence relation on an AEC $\K$ with amalgamation, joint embedding, and no maximal models. 
        \begin{enumerate}
            \item[$(1)$] For $\lambda \in \Stab(\K)$, we define $\underline{\kappa}(\nf,\K_\lambda,\leq_{\K}^u)$ to be the set of all regular cardinals $\delta < \lambda^+$ such that whenever $\{M_i \mid i \leq \delta\}$ is a $\leq_\K^u$-increasing continuous chain in $\K_\lambda$ and $p \in \gS(M_\delta)$, then there is some $i < \delta$ such that $p$ does not $\nf$-fork over $M_i$. 
            If $\underline{\kappa}(\nf,\textbf{K}_\lambda,\leq_{\textbf{K}}^u) \neq \varnothing$, let $\kappa(\nf,\K_\lambda,\leq_{\K}^u) = \min\underline{\kappa}(\nf,\textbf{K}_\lambda,\leq_{\textbf{K}}^u)$.
            
            \item[$(2)$] Define
            \[
                \underline{\chi}(\nf,\K,\leq_\K^u) = \bigcup_{\lambda \in \Stab(\K)}\underline{\kappa}(\nf,\K_\lambda,\leq_\K^u).
            \]
            If $\underline{\chi}(\nf,\K,\leq_\K^u) \neq \varnothing$, let $\chi(\nf,\K,\leq_\K^u) = \min\underline{\chi}(\nf,\K,\leq_\K^u)$.
        \end{enumerate}
    \end{definition}

    For $\lambda \in \Stab(\K_0)$, we will simply write $\underline{\kappa}(\lambda)$, $\kappa(\lambda)$, $\underline{\chi}$, and $\chi$ to denote $\underline{\kappa}(\nfp\upharpoonright \K_0,(\K_0)_\lambda,\leq_p^u)$, $\kappa(\nfp\upharpoonright\K_0,(\K_0)_\lambda,\leq_p^u)$, $\underline{\chi}(\nfp\upharpoonright\K_0,\K_0,\leq_p^u)$, and $\chi(\nfp\upharpoonright\K_0,\K_0,\leq_p^u)$, respectively.

    The following result is an immediate consequence of Lemma \ref{lem: gamma in underline{kappa}} and the above definitions. 

    \begin{corollary}\label{cor: upper bound for kappa}
        Let $S$ be an LO monoid and $\lambda \in \Stab(\K_0)$ with $\lambda > \cardS+\aleph_0$.
        Then $\gamma_r(S) \in \underline{\kappa}(\lambda)$ and, in particular, $\chi \leq \gamma_r(S)$.
    \end{corollary}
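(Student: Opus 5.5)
To show $\gamma_r(S) \in \underline{\kappa}(\lambda)$, I would unpack the definition of $\underline{\kappa}(\lambda)$ and check each requirement in turn. The main input is Lemma \ref{lem: gamma in underline{kappa}}; the only extra work is translating between Galois types in $\K_0$ and the set-based forking notation $\dnfbp{M}{a}{N}{C}$.

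First, $\gamma_r(S)$ must be a regular cardinal with $\gamma_r(S) < \lambda^+$. Regularity holds by Equation \eqref{eq:gamma_r}. For the bound, note that every left ideal is generated by at most $\cardS$ elements, so $\gamma(S) \leq (\cardS+\aleph_0)^+$. Hence $\gamma_r(S) \leq (\cardS+\aleph_0)^+ \leq \lambda < \lambda^+$, using the hypothesis $\lambda > \cardS+\aleph_0$. This is presumably exactly why that hypothesis is imposed.

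Next, let $\{M_i \mid i \leq \gamma_r(S)\}$ be a $\leq_p^u$-increasing continuous chain in $(\K_0)_\lambda$, and let $p \in \gS_{\K_0}(M_{\gamma_r(S)})$. Write $\delta = \gamma_r(S)$. Since $\K_0$ has amalgamation, we can realize $p = \gtp(a/M_\delta;C)$ for some $C \in \K_0$ with $M_\delta \leq_p C$ and some $a \in C$. The chain is $\leq_p$-increasing and continuous. Moreover, $\delta$ is a limit ordinal, being an infinite cardinal, and $\operatorname{cf}(\delta) = \delta \geq \gamma_r(S)$ by regularity. So Lemma \ref{lem: gamma in underline{kappa}} applies and gives some $i < \delta$ with $\dnfbp{M_i}{a}{M_\delta}{C}$. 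By definition, this says that $p$ does not $\nfp$-fork over $M_i$.

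One small point needs care: the witnessing acts in $\dnfbp{M_i}{a}{M_\delta}{C}$ should lie in $\K_0$, so that the non-forking is for $\nfp\upharpoonright\K_0$. This follows from Remark \ref{rem: branch ind. rel.}: all four acts in an independent square lie in the same branch, and $C \in \K_0$. A second point is whether non-forking of $p$ depends on the chosen representative $(a, C)$. It does not, since $\nfp\upharpoonright\K_0$ is a stable independence relation on $\K_0$ by Remark \ref{rem: branch ind. rel.}, and non-forking is then invariant under the choice of realization. Together these give $\gamma_r(S) \in \underline{\kappa}(\lambda)$.

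Finally, $\underline{\kappa}(\lambda) \subseteq \underline{\chi}$ because $\lambda \in \Stab(\K_0)$. So $\underline{\chi}$ is nonempty, and $\chi = \min\underline{\chi} \leq \gamma_r(S)$. Nothing in this argument is a real obstacle; the only step needing attention is the cardinality bound $\gamma_r(S) < \lambda^+$, which is handled by the estimate $\gamma(S) \leq (\cardS+\aleph_0)^+$ above.
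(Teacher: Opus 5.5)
Your proposal is correct and is essentially the paper's own argument: the paper simply applies Lemma \ref{lem: gamma in underline{kappa}} to a $\leq_p^u$-increasing continuous chain of length $\gamma_r(S)$ and calls the rest immediate from the definitions. Your extra checks spell out exactly what ``immediate'' hides: regularity of $\gamma_r(S)$, the bound $\gamma_r(S) \leq (\cardS+\aleph_0)^+ \leq \lambda$ coming from $\lambda > \cardS+\aleph_0$, and that the witnessing acts stay in $\K_0$ by Remark \ref{rem: branch ind. rel.}; the only quibble is that amalgamation is not needed to realize $p$, since every $p \in \gS_{\K_0}(M_\delta)$ is by definition of the form $\gtp(a/M_\delta;C)$ with $M_\delta \leq_p C \in \K_0$.
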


    Our next objective is to show that the reverse inequality also holds, i.e., $\gamma_r(S) \leq \chi$.
    We use the fact below to verify that $\K_0$ satisfies all of the hypotheses used in \cite[Section 5]{beard2025spectrum}, which will be of relevance throughout the end of the section.

    \begin{fact}[{\cite[Lemma 6.1]{beard2025spectrum}\label{fact: checking hypotheses}}]
        Suppose $\K$ is an AEC with joint embedding, no maximal models, $(<\aleph_0)$-tameness, and a weakly stable independence relation with strong $\LS(\K)$-local character.
        Then $\K$ satisfies \cite[Hypothesis 5.5]{beard2025spectrum}.
    \end{fact}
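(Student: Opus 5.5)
The statement is quoted from \cite[Lemma 6.1]{beard2025spectrum}, so the plan is to unpack \cite[Hypothesis 5.5]{beard2025spectrum} item by item and derive each item from the listed assumptions. The reason to do this carefully is that it is exactly what we need for $\K_0$. By Fact \ref{fact: branch AECs} and Remarks \ref{rem: branch stability and tameness} and \ref{rem: branch ind. rel.}, $\K_0$ is an AEC with amalgamation, joint embedding, no maximal models, and $(<\aleph_0)$-tameness. When $S$ is LO, $\nfp\upharpoonright\K_0$ is a weakly stable (indeed stable) independence relation with strong $(\cardS+\aleph_0) = \LS(\K_0)$-local character. I would not need the full content of Hypothesis 5.5 beyond what the listed assumptions visibly imply. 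From memory of \cite{vasey2018toward} and \cite{beard2025spectrum}, I expect it to comprise:
\begin{itemize}
\item amalgamation, joint embedding and no maximal models;
\item stability in some cardinal;
\item tameness;
\item a weakly stable independence relation satisfying the usual axioms together with local character for chains of universal extensions.
\end{itemize}

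The key steps, in order, are as follows. First, amalgamation is automatic: the existence axiom of a weakly stable independence relation produces, for any span $N_1 \leftarrow M \rightarrow N_2$, an independent square completing it. Second, joint embedding and no maximal models are assumed directly. Third, $(<\aleph_0)$-tameness gives $\LS(\K)$-tameness by monotonicity. Fourth, stability follows from the counting argument of Lemma \ref{lem: LO -> Stable}, carried out abstractly. Fix $\lambda = \lambda^{\LS(\K)}$ and $N \in \K_\lambda$. By strong $\LS(\K)$-local character, each $p \in \gS(N)$ does not fork over some $M \leq_\K N$ of size $\LS(\K)$. There are at most $\lambda^{\LS(\K)} = \lambda$ such $M$, and at most $2^{\LS(\K)} \le \lambda$ types over each. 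Uniqueness of nonforking extensions (\cite[Theorem 8.5]{Lieberman_2019}) then bounds $\card{\gS(N)}$ by $\lambda$. Fifth, local character along $\leq^u_\K$-chains of cofinality at least $\LS(\K)^+$ follows from strong local character: the small base of size $\LS(\K)$ is contained in some $M_i$, and base monotonicity (valid for weakly stable relations, \cite[Lemma 3.23]{Lieberman_2019}) moves the base up to $M_i$.

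The step I expect to be the main obstacle is matching the precise formulation of Hypothesis 5.5. If it requires the independence relation to be defined on types over arbitrary sets, or to satisfy a continuity or witness property along chains, I would first try to extract these from tameness. Concretely, I would show that nonforking over a union of a chain is witnessed on finite subsets, in the style of Lemma \ref{lem: witness property}, using the characterization of types by finite pieces. Failing that, I would derive these properties from the general results of \cite[Section 8]{Lieberman_2019} for weakly stable relations with local character.
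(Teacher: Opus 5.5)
The paper contains no argument for this statement. It is quoted from \cite[Lemma 6.1]{beard2025spectrum} and only applied in Remark \ref{rem: K0 satisfies hyp5.5}. Judged as a proof, your proposal has a genuine gap, and it is the one you anticipate: you never fix the content of \cite[Hypothesis 5.5]{beard2025spectrum}, so checking a remembered list cannot establish the statement.

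The items you do verify are fine:
\begin{enumerate}
\item amalgamation from existence;
\item stability in every $\lambda=\lambda^{\LS(\K)}$, by counting bases of size $\LS(\K)$ and applying uniqueness \cite[Theorem 8.5]{Lieberman_2019};
\item local character along chains of cofinality $>\LS(\K)$, via base monotonicity.
\end{enumerate}
Notice, however, that none of them uses the specific form of the tameness hypothesis: your third step immediately weakens $(<\aleph_0)$-tameness to $\LS(\K)$-tameness. The finite form is what controls chains of small cofinality, i.e.\ the continuity property you leave open. The present paper uses $(<\aleph_0)$-tameness in the same way to obtain weak continuity of splitting before invoking \cite[Corollary 11.4]{vasey2018toward}.

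Your proposed route to that property does not work. Lemma \ref{lem: witness property} rests on the concrete description ($\dnfp{M}{A}{B}{C}$ iff $A\cap B=M$), which has no analogue for an abstract weakly stable $\nf$. What does work is an extension-plus-uniqueness argument, which is where $(<\aleph_0)$-tameness enters:
\begin{enumerate}
\item Let $\delta$ be a limit ordinal, $\langle M_i \mid i\le\delta\rangle$ be $\leq_\K$-increasing continuous, and $p\in\gS(M_\delta)$ be such that $p\upharpoonright M_i$ does not fork over $M_0$ for all $i<\delta$.
\item Write $p\upharpoonright M_0=\gtp(a/M_0;N)$ and apply existence to the span $N\leftarrow M_0\rightarrow M_\delta$. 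This gives $q\in\gS(M_\delta)$ extending $p\upharpoonright M_0$ and not forking over $M_0$.
\item By monotonicity and uniqueness, $q\upharpoonright M_i=p\upharpoonright M_i$ for all $i<\delta$.
\item Every finite $X\subseteq M_\delta$ lies in some $M_i$, so $p$ and $q$ agree on all finite subsets. Then $(<\aleph_0)$-tameness gives $p=q$, so $p$ does not fork over $M_0$.
\end{enumerate}
$\LS(\K)$-tameness alone would not suffice when $\operatorname{cf}(\delta)\le\LS(\K)$, since a subset of size $\LS(\K)$ need not lie inside any $M_i$. Until Hypothesis 5.5 is pinned down and each of its clauses is derived in this way, the proposal is a plausible outline rather than a proof.
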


    \begin{remark}\label{rem: K0 satisfies hyp5.5}
        In particular, for any LO monoid $S$, $\K_0$ satisfies \cite[Hypothesis 5.5]{beard2025spectrum} since the hypotheses of Fact \ref{fact: checking hypotheses} were verified for $\K_0$ in Fact \ref{fact: branch AECs}, Remark \ref{rem: branch stability and tameness}, and Remark \ref{rem: branch ind. rel.}.
    \end{remark}

    \begin{remark}
        The definitions for $\underline{\kappa}(\nf,\K_\lambda,\leq_{\K}^u)$, $\kappa(\nf,\K_\lambda,\leq_{\K}^u)$, $\underline{\chi}(\nf,\K,\leq_\K^u)$, and $\chi(\nf,\K,\leq_\K^u)$ above differ from \cite{vasey2018toward} where splitting is used rather than an arbitrary independence relation.
        However, by Remark~\ref{rem: K0 satisfies hyp5.5}, we may apply \cite[Corollary 5.11]{beard2025spectrum} to see that the definitions agree in our setup. 
    \end{remark}

    The next result will use \cite[Corollary 11.4]{vasey2018toward}.
    The hypotheses there are for $\K$ to be an $\LS(\K)$-tame AEC with amalgamation, joint embedding, no maximal models, and for splitting to have weak continuity.
    It is clear that $\K_0$ satisfies the first four of these hypotheses by Remark \ref{rem: branch stability and tameness} and Fact \ref{fact: branch AECs}, while the last hypothesis follows from $(< \aleph_0)$-tameness and \cite[Lemma 3.2]{mazari2024existence}.\footnote{
        It is straightforward to show that $(<\aleph_0)$-tameness implies $(<\lambda^+,\lambda)$-locality for every infinite cardinal $\lambda$, and that weak continuity of splitting follows from continuity in every $\lambda \in \Stab(\K_0)$.
        For the interested reader, the relevant definitions are \cite[Definitions 2.8 and 3.1]{mazari2024existence} and \cite[Definitions 2.4, 2.6, 3.8, and 11.1]{vasey2018toward}.
    }

    \begin{lemma}\label{lem: chi = gamma_r(S)}
        Let $\lambda_0 = \beth_{\beth_{\left(2^{\cardS+\aleph_0}\right)^+}}$ and assume that $\theta(\lambda_0)$ exists.
        If $S$ is an LO monoid, then $\chi = \gamma_r(S)$.
    \end{lemma}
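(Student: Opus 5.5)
Since Corollary \ref{cor: upper bound for kappa} already gives $\chi \leq \gamma_r(S)$ (note $\Stab(\K_0) = \Stab(\KActp)$ is nonempty by Lemma \ref{lem: LO -> Stable}, so $\underline{\chi} \neq \varnothing$), the plan is to prove $\gamma_r(S) \leq \chi$. The route goes through \cite[Corollary 11.4]{vasey2018toward}, whose hypotheses for $\K_0$ were just verified in the paragraph preceding the statement. Under the assumption that $\theta(\lambda_0)$ exists, that corollary yields an eventual description of the stability spectrum. Concretely, I expect it to say that for all sufficiently large $\lambda$ (say $\lambda > \theta(\lambda_0)$), $\K_0$ is stable in $\lambda$ as soon as $\operatorname{cf}(\lambda) \geq \chi$, or more precisely as soon as $\lambda^{<\chi}=\lambda$ or $\lambda$ is a suitable limit with cofinality at least $\chi$. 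The key point is that stability is witnessed at cardinals whose cofinality is exactly $\chi$.

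First, I reduce to the case where $\chi$ is a regular cardinal. It is a minimum of a set of regular cardinals, so this is automatic. Second, I take a large cardinal $\lambda > \theta(\lambda_0)$ with $\operatorname{cf}(\lambda) = \chi$ to which \cite[Corollary 11.4]{vasey2018toward} applies. A natural choice is $\lambda = \beth_{\mu + \chi}$ for large $\mu$, or, if the corollary is phrased via $\lambda^{<\chi}=\lambda$ at successors, a strong limit of cofinality $\chi$ assembled from stability cardinals. The corollary then gives $\lambda \in \Stab(\K_0)$. Third, Lemma \ref{lem: stability spectrum of zero branch} gives $\lambda \in \Stab(\KActp)$. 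Corollary \ref{cor: stab in pure -> lambda^(<gamma) = lambda} (valid since $\lambda \geq 2^{\cardS+\aleph_0}$) then gives $\chi = \operatorname{cf}(\lambda) \geq \gamma_r(S)$. Combined with the upper bound, this yields $\chi = \gamma_r(S)$.

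The main obstacle is the middle step. I must extract from \cite[Corollary 11.4]{vasey2018toward} the precise statement that, above $\theta(\lambda_0)$, every $\lambda$ with $\operatorname{cf}(\lambda)\geq\chi$ (or some cardinal of cofinality exactly $\chi$) is a stability cardinal, and check that the notion of $\chi$ there matches ours. The second issue is handled by the preceding remark via \cite[Corollary 5.11]{beard2025spectrum}. If the corollary instead characterizes stability as ``$\lambda^{<\chi}=\lambda$ for $\lambda$ large,'' I will apply it to $\lambda = \mu^{<\chi}$ or similar: $\lambda^{<\chi}=\lambda$, yet $\lambda$ can be chosen with cofinality exactly $\chi$ when $\chi>\aleph_0$. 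Failing that, I will compare directly with the characterization $\lambda^{<\gamma(S)}=\lambda$ obtained in Theorem \ref{thm: GCH stability}. Choosing $\lambda$ with $\lambda^{<\chi}=\lambda$ but $\lambda^{<\gamma(S)}\neq\lambda$, assuming $\chi<\gamma_r(S)$, gives a contradiction.
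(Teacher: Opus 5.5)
Your proposal follows the paper's proof. The upper bound comes from Corollary \ref{cor: upper bound for kappa}. For the lower bound the paper takes exactly your kind of cardinal, $\lambda=\beth_\chi(\theta(\lambda_0))$, a strong limit of cofinality $\chi$. It verifies $\lambda^{<\chi}=\lambda$ using the regularity of $\chi$ and $\chi\le\gamma_r(S)\le\lambda_0$, applies \cite[Corollary 11.4]{vasey2018toward} to get $\lambda\in\Stab(\K_0)$, and concludes $\chi=\operatorname{cf}(\lambda)\ge\gamma_r(S)$ via Lemma \ref{lem: stability spectrum of zero branch} and Corollary \ref{cor: stab in pure -> lambda^(<gamma) = lambda}.

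The only thing your sketch leaves implicit is the cardinal-arithmetic check $\lambda^{<\chi}=\lambda$, which is the hypothesis the cited corollary actually uses. It holds for your $\beth_{\mu+\chi}$ for the same strong-limit reason.
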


        \begin{proof}
            Since $\chi \leq \gamma_r(S)$ by Corollary \ref{cor: upper bound for kappa}, it suffices to show that $\chi \geq \gamma_r(S)$.
            Let $\lambda_1 = \theta(\lambda_0)$ and observe that if $\mu < \chi$, then
            \[
                \beth_\chi(\lambda_1)^\mu = \left(\sup_{\alpha < \chi} \beth_\alpha(\lambda_1)\right)^\mu 
                \leq \sup_{\alpha < \chi} \beth_\alpha (\lambda_1)^\mu
                \leq \sup_{\alpha < \chi} \beth_{\alpha+1}(\lambda_1)^\mu 
                = \sup_{\alpha < \chi}\left(2^{\beth_\alpha(\lambda_1)}\right)^\mu
            \]
            \[
                = \sup_{\alpha < \chi} 2^{\beth_\alpha(\lambda_1)}
                = \sup_{\alpha < \chi}\beth_{\alpha + 1}(\lambda_1)
                = \sup_{\alpha < \chi}\beth_\alpha(\lambda_1) = \beth_\chi(\lambda_1),
            \]
            where $\left(\sup_{\alpha < \chi} \beth_\alpha(\lambda_1)\right)^\mu \leq \sup_{\alpha < \chi} \beth_\alpha (\lambda_1)^\mu$  because $\operatorname{cf}(\chi) = \chi > \mu$, and $\left(2^{\beth_\alpha(\lambda_1)}\right)^\mu = 2^{\beth_\alpha(\lambda_1)}$ for each $\alpha < \chi$ because $\mu < \chi \leq \gamma_r(S) \leq (\cardS+\aleph_0)^+ \leq \lambda_0 \leq \lambda_1 \leq \beth_\alpha(\lambda_1)$.
            It follows that
            \[
                \beth_\chi(\lambda_1)^{<\chi} = \sum_{\mu < \chi}\beth_{\chi}(\lambda_1)^\mu = \sum_{\mu < \chi}\beth_\chi(\lambda_1) = \beth_\chi(\lambda_1).
            \]
            By design, $\lambda_1$ was chosen large enough\footnote{
                This follows from \cite[Theorem 11.3 and Notation 2.1]{vasey2018toward} where, in our setting, $H_1 = \beth_{(2^{\cardS+\aleph_0})^+}$.
            } so that \cite[Corollary 11.4]{vasey2018toward} may be applied to see that $\K_0$ is stable in $\beth_\chi(\lambda_1)$.
            Therefore, Lemma \ref{lem: stability spectrum of zero branch}, Corollary \ref{cor: stab in pure -> lambda^(<gamma) = lambda}, and the regularity of $\chi$ imply that $\chi = \operatorname{cf}(\chi) = \operatorname{cf}(\beth_\chi(\lambda_1)) \geq \gamma_r(S)$.
        \end{proof}

    Assuming a set-theoretic hypothesis similar to the one used in Theorem \ref{thm: GCH stability}, we can apply \cite[Theorem~5.24]{beard2025spectrum} to characterize the spectrum of limit models of $\K_0$.

    \begin{theorem}\label{thm: limit model iso}
        Let $\lambda_0 = \beth_{\beth_{\left(2^{\cardS+\aleph_0}\right)^+}}$ and assume that $\theta(\lambda_0)$ exists.
        If $\K_0$ is stable in $\lambda \geq \lambda_0$, and $A_1, A_2, M \in \K_0$ with $A_i$ a $(\lambda,\delta_i)$-limit model (over $M$) for $i = 1,2$ and $\delta_1,\delta_2 < \lambda^+$, then $A_1$ is isomorphic to $A_2$ (over $M$) if and only if $\operatorname{cf}(\delta_1),\operatorname{cf}(\delta_2) \geq \gamma_r(S)$.
    \end{theorem}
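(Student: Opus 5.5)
The plan is to derive the statement directly from \cite[Theorem~5.24]{beard2025spectrum}, once its hypotheses have been checked for $\K_0$ with the independence relation $\nfp\upharpoonright\K_0$. That theorem says, roughly, the following. Let $\K$ satisfy \cite[Hypothesis 5.5]{beard2025spectrum}, let $\lambda$ be a stability cardinal that is large enough relative to the threshold from \cite{vasey2018toward}, and let $\delta_1,\delta_2<\lambda^+$. Then two $(\lambda,\delta_i)$-limit models (over $M$) are isomorphic (over $M$) if and only if either $\operatorname{cf}(\delta_1)=\operatorname{cf}(\delta_2)$ or both cofinalities are at least $\chi$.

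\textbf{Hypotheses.} First, Remark \ref{rem: K0 satisfies hyp5.5} gives that $\K_0$ satisfies \cite[Hypothesis 5.5]{beard2025spectrum}. This uses joint embedding, disjoint amalgamation and no maximal models from Fact \ref{fact: branch AECs}, $(<\aleph_0)$-tameness from Remark \ref{rem: branch stability and tameness}, and the stable independence relation with strong $(\cardS+\aleph_0)$-local character from Remark \ref{rem: branch ind. rel.}. Second, the assumption that $\theta(\lambda_0)$ exists is exactly the set-theoretic hypothesis used in Lemma \ref{lem: chi = gamma_r(S)}, and it also supplies the largeness of $\lambda\ge\lambda_0$ required in \cite{beard2025spectrum}. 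I would check that the threshold there is the $\beth_{\beth_{(2^{\cardS+\aleph_0})^+}}$ computed via $H_1$ in the footnote to Lemma \ref{lem: chi = gamma_r(S)}. This matching of cardinal thresholds is the step I expect to be the main bookkeeping obstacle. If the cited threshold is larger, I would instead use Lemma \ref{lem: gamma in underline{kappa}} directly, which gives $\gamma_r(S)\in\underline{\kappa}(\lambda)$ for every $\lambda>\cardS+\aleph_0$. That would let the uniqueness part of the argument below run without any threshold, leaving only the non-isomorphism part to depend on it.

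\textbf{Substituting $\chi$.} Lemma \ref{lem: chi = gamma_r(S)} gives $\chi=\gamma_r(S)$, so the cited criterion becomes: either $\operatorname{cf}(\delta_1)=\operatorname{cf}(\delta_2)$ or both are $\ge\gamma_r(S)$.

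\textbf{Sufficiency.} Suppose $\operatorname{cf}(\delta_i)\ge\gamma_r(S)$ for $i=1,2$. I would give the standard back-and-forth argument as a sanity check. First replace each chain by a cofinal subchain of length $\operatorname{cf}(\delta_i)$. Then build an isomorphism of the two limit models over $M$, using universality to extend maps at successor steps. At limit stages of cofinality $\ge\gamma_r(S)$, Lemma \ref{lem: gamma in underline{kappa}} ensures that types do not fork over an earlier model, and uniqueness of nonforking extensions \cite[Theorem 8.5]{Lieberman_2019} then keeps the construction going. This is the content of \cite[Theorem 5.24]{beard2025spectrum}, so citing it suffices.

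\textbf{Necessity and a caveat.} As literally written, the statement appears to fail when $\operatorname{cf}(\delta_1)=\operatorname{cf}(\delta_2)<\gamma_r(S)$. In that case the models are always isomorphic (over $M$), by a routine back-and-forth along chains of the same cofinality. So I would read the intended claim as follows: for $\operatorname{cf}(\delta_1)\ne\operatorname{cf}(\delta_2)$, we have isomorphism if and only if both cofinalities are $\ge\gamma_r(S)$. For necessity under that reading, \cite[Theorem 5.24]{beard2025spectrum} says that a $(\lambda,\delta)$-limit model with $\operatorname{cf}(\delta)<\chi$ is not isomorphic to one of different cofinality. Concretely, the minimality of $\chi$ gives a chain of length $\operatorname{cf}(\delta)$ together with a type that forks over every model in the chain, and this type witnesses the failure of isomorphism.
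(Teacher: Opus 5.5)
Your proposal follows the paper's proof: the paper combines Remark~\ref{rem: K0 satisfies hyp5.5} and Lemma~\ref{lem: chi = gamma_r(S)} (giving $\chi=\gamma_r(S)$) with \cite[Theorem~5.24]{beard2025spectrum}, so your back-and-forth sketches are not needed once that theorem is cited. One small correction: the existence of $\theta(\lambda_0)$ is used only to obtain $\chi=\gamma_r(S)$, and $\lambda\geq\lambda_0$ is an explicit hypothesis rather than something that assumption supplies.

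Your caveat is correct. If $\operatorname{cf}(\delta_1)=\operatorname{cf}(\delta_2)<\gamma_r(S)$, the two limit models are isomorphic over $M$; for instance take $A_1=A_2$ with $\delta_1=\delta_2=\omega$ for the LO, non-weakly-noetherian monoid $(\mathbb{Q}_{\geq 0},+)$, where $\gamma_r(S)=\aleph_1$. So the ``only if'' direction must be read with the implicit assumption $\operatorname{cf}(\delta_1)\neq\operatorname{cf}(\delta_2)$.
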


        \begin{proof}
            We know by Lemma \ref{lem: chi = gamma_r(S)} that $\chi = \gamma_r(S)$ and so, the result follows from Remark \ref{rem: K0 satisfies hyp5.5} and \cite[Theorem~5.24]{beard2025spectrum}.
        \end{proof}

    As with Theorem \ref{thm: GCH stability}, we believe that Theorem \ref{thm: limit model iso} might actually be obtainable in ZFC, but do not know how to show it.

    We also do not know if Theorem \ref{thm: limit model iso} holds for other branches of $\KActp$, but it would hold for any branch such that the stability spectrum of the branch is the same as the stability spectrum of $\KActp$ above some cardinal as this is the only special property of $\K_0$ we used when we calculated $\chi$ in Lemma \ref{lem: chi = gamma_r(S)}. In particular, we have:
     
    \begin{lemma}
        Let $\lambda_0 = \beth_{\beth_{\left(2^{\cardS+\aleph_0}\right)^+}}$ and assume that $\theta(\lambda_0)$ exists.
        If $\KActp$ is superstable, $\lambda \geq \lambda_0$, and $A_1, A_2, M \in \KActp$ with $A_i$ a $(\lambda,\delta_i)$-limit model over $M$ for $i = 1,2$ and $\delta_1,\delta_2 < \lambda^+$, then $A_1$ is isomorphic to $A_2$ over $M$.
    \end{lemma}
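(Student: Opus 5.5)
The plan is to reduce to the branch $\K_0$ argument (Theorem \ref{thm: limit model iso}), working branch by branch. First, superstability gives, by Theorem \ref{thm: WO <-> Superstable}, that $S$ is weakly noetherian and LO. Hence $\gamma(S) = \gamma_r(S) = \aleph_0$. Since $A_1, A_2$ are both limit models over the same $M$, all of $M, A_1, A_2$ lie in a single branch $\K' = (K',\leq_p)$ of $\KActp$, namely the $\sim$-class of $M$. Universal extensions and limit models computed in $\KActp$ over $M$ are the same as those computed in $\K'$: any $L \in \KActp$ with $M \leq_p L$ lies in $\K'$. So it suffices to show that in $\K'$ any two $(\lambda,\delta_i)$-limit models over $M$ are isomorphic over $M$.

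By Fact \ref{fact: branch AECs}, Remark \ref{rem: branch stability and tameness} and Remark \ref{rem: branch ind. rel.}, $\K'$ has joint embedding, no maximal models, $(<\aleph_0)$-tameness, and the weakly stable independence relation $\nfp\upharpoonright K'$ with strong $(\cardS+\aleph_0)$-local character. Hence by Fact \ref{fact: checking hypotheses}, $\K'$ satisfies \cite[Hypothesis 5.5]{beard2025spectrum}. I would then compute $\chi$ for $\K'$ as in Lemma \ref{lem: chi = gamma_r(S)}.

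The upper bound $\chi \leq \gamma_r(S) = \aleph_0$ comes from Lemma \ref{lem: gamma in underline{kappa}}, exactly as in Corollary \ref{cor: upper bound for kappa}. That lemma does not depend on the branch, since the chains and the forking witness stay inside $\K'$. Stability of $\K'$ in some $\lambda > \cardS+\aleph_0$ holds because $\KActp$ is stable in all large $\lambda$ and $\Stab(\KActp)\subseteq \Stab(\K')$ by Remark \ref{rem: branch stability and tameness}. Since $\chi$ is an infinite regular cardinal, $\chi = \aleph_0$ automatically. So the lower bound, which in Lemma \ref{lem: chi = gamma_r(S)} was the only place where $\K_0$'s special property (Lemma \ref{lem: stability spectrum of zero branch}) was needed, is trivial here. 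This is exactly why superstability removes the branch issue.

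Finally, $\lambda \ge \lambda_0$ and superstability give that $\lambda \in \Stab(\KActp) \subseteq \Stab(\K')$. The key step is to ensure $\lambda$ lies in the stability range from the proof of sufficiency in Theorem \ref{thm: WO <-> Superstable}, which holds since $\lambda \geq \lambda_0 \geq 2^{\cardS+\aleph_0}$. Applying \cite[Theorem~5.24]{beard2025spectrum} to $\K'$ with $\chi = \aleph_0$, the limit models $A_1, A_2$ over $M$ are isomorphic over $M$ iff $\operatorname{cf}(\delta_1),\operatorname{cf}(\delta_2) \geq \aleph_0$. This always holds for limit ordinals. The main point to check carefully is that the hypotheses of \cite[Theorem 5.24]{beard2025spectrum} (including the set-theoretic assumption that $\theta(\lambda_0)$ exists) hold for an arbitrary branch. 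They do, since they were verified branch-independently above.
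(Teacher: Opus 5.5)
Your proposal is correct and follows essentially the paper's route. Like the paper, you place $A_1, A_2, M$ in one branch $\K'$, check \cite[Hypothesis 5.5]{beard2025spectrum} for $\K'$, get stability of $\K'$ in every $\lambda \geq 2^{\cardS+\aleph_0}$ from the proof of Theorem \ref{thm: WO <-> Superstable}, and conclude with \cite[Theorem~5.24]{beard2025spectrum} using $\chi = \gamma_r(S) = \aleph_0$. The one difference is that the paper gets $\chi = \gamma_r(S)$ by rerunning Lemma \ref{lem: chi = gamma_r(S)} for $\K'$, whereas you note that once $\gamma_r(S)=\aleph_0$ the lower bound is automatic, so only the upper bound from Lemma \ref{lem: gamma in underline{kappa}} is needed. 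That is a valid shortcut which avoids the appeal to \cite[Corollary 11.4]{vasey2018toward}.
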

     
        \begin{proof} 
            Since $A_1, A_2$ are limit models over $M$, we have that $A_1, A_2$ are in the same branch $\K'$ of  $\KActp$. Because $\KActp$ is superstable, it follows from the proof of Theorem \ref{thm: WO <-> Superstable} that  both $\KActp$ and $\K'$ are stable in every cardinal $\lambda \geq 2^{\cardS + \aleph_0}$. Arguing as for $\K_0$, we have that $\K'$ satisfies \cite[Hypothesis 5.5]{beard2025spectrum} (see Remark \ref{rem: K0 satisfies hyp5.5}) and $ \chi(\nfp\upharpoonright\K',\K',\leq_p^u)=\gamma_r(S)$ (see Lemma \ref{lem: chi = gamma_r(S)}). Since $S$ is weakly noetherian by Theorem~\ref{thm: WO <-> Superstable}, we have that $\gamma_r(S) = \aleph_0$. The result then follows from \cite[Theorem~5.24]{beard2025spectrum}.
        \end{proof}

    \begin{remark}
    If $\K$ is an $\LS(\K)$-tame AEC with joint embedding, amalgamation, and no maximal models, then $\K$ is superstable if and only if there is some $\mu \geq \LS(\K)$ such that $\K$ has a unique (up to isomorphism) $\lambda$-limit model for every $\lambda \geq \mu$ by \cite[Corollary 1.3]{grossberg2017equivalent} and \cite[Corollary 4.24]{vasey2018toward}.
        If $S$ is LO and weakly noetherian but does not have local zeros, then $\KActp$ is superstable but does not have joint embedding. 
        Hence $\KActp$ does not have 
        % {\color{red}``globally"} 
        unique $\lambda$-limit models; however the previous result shows that we have unique $\lambda$-limit models over the same basis.
    \end{remark}
   
\bibliographystyle{abbrvnat}
\bibliography{refs}
\end{document}